\numberwithin{equation}{section} 
\newtheorem{theorem}{Theorem}[section]
\newtheorem{lemma}[theorem]{Lemma}
\newtheorem{corollary}[theorem]{Corollary}
\newtheorem{remark}[theorem]{Remark}
\newtheorem{definition}{Definition}[section]
\def\R{{\mathbb R}}
\def\N{{\mathbb N}}
\def\cL{{\mathcal L}}
\def\cM{{\mathcal M}}
\def\a{\alpha}
\def\b{\beta}
\def\e{\varepsilon}
\def\d{\delta}
\def\D{\Delta}
\def\G{\Gamma}
\def\k{\kappa}
\def\l{\lambda}
\def\L{\Lambda}
\def\m{\mu}
\def\n{\nabla}
\def\p{\partial}
\def\r{\rho}
\def\s{\sigma}
\def\t{\tau}
\def\w{\omega}
\def\W{\Omega}
\def\1{\left(}
\def\2{\right)}
\def\3{\left\{}
\def\4{\right\}}
\def\8{\infty}
\def\sm{\setminus}
\def\ss{\subseteq}
\DeclareMathOperator*{\diam}{diam}
\DeclareMathOperator*{\supp}{supp}
\title{Regularity for solutions of non local parabolic equations}
\author[H. Chang Lara]{H\'ector Chang Lara}
\address{%
University of Texas at Austin\\
Department of Mathematics\\
1 University Station C1200\\
Austin, TX 78712
}
\email{hchang@math.utexas.edu}
\author[G. D\'avila]{Gonzalo D\'avila}
\address{%
University of Texas at Austin\\
Department of Mathematics\\
1 University Station C1200\\
Austin, TX 78712
}
\email{gdavila@math.utexas.edu}
\begin{document}
\begin{abstract}
We study the regularity of solutions of parabolic fully nonlinear nonlocal equations. We proof $C^\a$ regularity in space and time and, under different assumptions on the kernels, $C^{1,\a}$ in space for translation invariant equations. The proofs rely on a weak parabolic ABP and the classic ideas of \cite{T} and \cite{W}. Our results remain uniform as $\s\to2$ allowing us to recover most of the regularity results found in \cite{W}.
\end{abstract}

\maketitle


\section{Introduction}\label{Introduction}

In this paper we are interested in studying regularity for solutions of 
\begin{align}\label{eqbase}
u_t-Iu=0,
\end{align}
where $I$ is a fully nonlinear nonlocal operator. The previous type of equations arises naturally when studying evolution equations for purely discontinuous L\'evy processes. In this case $I$ is given by the jump part (which is a linear operator, usually denoted by $L$), i.e. for $u \in C^\8_0$,
\begin{align}\label{linear}
Lu(x,t) &= P.V.\int (u(x+y,t)-u(x,t)-\nabla u(x,t)\cdot y\chi_{B_1}(y))d\mu(y)\\
\nonumber & = \lim_{\e\to 0} \int_{\R^n\sm B_\e}(u(x+y,t)-u(x,t)-\nabla u(x,t)\cdot y\chi_{B_1}(y))d\mu(y).
\end{align}
The type measures we are considering are of the form $d\m = K(x,t;y)dy$ for some kernel positive $K$ even in the $y$ variable, i.e. $K(x,t;y) = K(x,t;-y)$. This assumption allows us to rewrite the operator without the principal value in the following way,
\begin{align*}
Lu(x,t) = \int_{\R^n}\d(u,x,t;y)K(x,t;y)dy,
\end{align*}
where $2\d(u,x,t;y)=u(x+y,t)+u(x-y,t)-2u(x,t)$ is the second order difference in space of $u$ at $(x,t)$.

More complicated equations appear in competitive games, in this case the nonlocal operator is given by and inf sup or sup inf combination of linear operators,
\begin{align}
\label{eqinfsup} \text{(Inf-sup type) } Iu(x,t) &= \inf_\b\sup_\a L_{\a,\b} u(x,t),\\
\label{eqsupinf} \text{(Sup-inf type) } Iu(x,t) &= \sup_\b\inf_\a L_{\a,\b} u(x,t).
\end{align}
A particular case of the previous ones are the extremal operators which we denote in the following way
\begin{align}
\label{moperator1} \text{(Maximal) } \cM^{+}_{\cL}u(x,t)&=\sup_{L\in\cL}Lu(x,t),\\
\label{moperator2} \text{(Minimal) } \cM^{-}_{\cL}u(x,t)&=\inf_{L\in\cL}Lu(x,t).
\end{align}
In general, we are interested in operators satisfying a standard ellipticity condition, which allow us to control $I(u+v)-Iu$ by extremal operators. We will give a precise definition in Section \ref{DAP}. 

In the case of a family of symmetric kernels $K_{\a,\b}(y)=a_{\a,\b}(y)|y|^{-(n+1)}$, L. Silvestre studied in \cite{S2} the regularity of the solution of the Hamilton-Jacobi equation
\begin{align*}
u_t-\sup_\a\inf_\b\1c_{\a,\b}+b_{\a,\b}\cdot\n u+\int\d(u,x,y)\frac{a_{\a,\b}(y)}{|y|^{n+1}}dy\2=0,
\end{align*}
where $c_{\a,\b}$ is a family of constants, $b_{\a,\b}$ is a bounded family of vectors and $\l\leq a_{\a,\b}\leq \L$. He was able to prove that the solution of the equation was classic using a non-variational approach to proof a diminish of oscillation lemma. A modification of the proof in \cite{S2} allows to get the regularity for for the same type of equations we study here, but the estimates would blow up as $\s\to 2$.

The variational problem was studied by L. Caffarelli, C. Chan and A. Vasseur in \cite{CHV} by using De Giorgi's technique. Also recently,  M. Felsinger and M. Kassmann in \cite{FK}, obtained a Harnack inequality where the constants remain uniform as the order of the equation goes to the classical one by using Moser's technique.  

The focus of this paper is to study regularity of solutions of \eqref{eqbase}, that remain uniform as $\s\to 2$. This will provide a natural extension to part of the theory already developed by L. Wang in \cite{W}.

The paper is divided as follows. In Section \ref{DAP} we gave the proper definitions of viscosity solutions and maximal operators. We will also specify what type of equations we are dealing with, write the hypothesis over the kernels and give some properties. For instance, we state there a comparison principle and the existence of solutions for the Dirichlet problem. Section \ref{SectionABP} is the heart of this article. There we proof a very weak ABP type of estimate that will allow us to proof a point estimate in Section \ref{SPE}. The strategy to prove the ABP consist in showing that we can cover the contact set of $u$ (with respect to its convex envelope $\G$) by a set where $u$ does not separate too much from $\G$ in a fixed portion. This will consist of two parts, controlling the convex envelope by above and below around a contact point respectively. For the control by above, we iterate Lemma 5.1 in \cite{S2}, for the second part we use an appropriated barrier to show that $\G$ does not decrease too fast. Section \ref{SPE} deals with the point estimate which follows the ideas of \cite{W}. Finally in Section \ref{RR} we state and proof our regularity results.


\section{Definitions and Preliminaries}\label{DAP}


\subsection{Non local operators}

To be precise about the formulas we presented in the previous section, we need to ask and integrability condition to $K$ around the origin,
\begin{align}\label{intcond}
\int_{B_1} |y|^2 K(x,t;y)dy < \infty.
\end{align}
It allows us to write rigorously
\begin{align*}
Lu(x,t) = \int \d(u,x,t;y)K(x,t;y)dy
\end{align*}
not only when $u$, with compact support, is in $C^\8_0$, but also when $u(\cdot,t) \in C^{1,1}(x)$.

\begin{definition}
$u(\cdot,t) \in C^{1,1}(x)$, if there exists a vector $v \in \R^n$ and a number $M>0$ such that
\begin{align*}
|u(y+x,t)-u(x,t)-v\cdot y| < M|y|^2 \text{ for $|y|$ small enough.}
\end{align*}
\end{definition}

Notice that this definition implies $|\d(u,x,t;y)| = O(|y|^2)$ as $|y|$ is close to zero. This is why we can get rid of the principal value in the integral.

We say that a family $\cL$ of linear operators satisfy the integrability condition uniformly in $\W\times[-T,0]$ when the upper bounds in \eqref{intcond} can be taken independent of $L \in \cL$ and $(x,t) \in \W\times[-T,0]$.

We will consider as in \cite{C2} absolute continuous weights $\w$ which measure the contributions of the tails to the non local operators. This allows to compute $Lu$ even when $u$ does not have compact support.

\begin{definition}
The space of function $L^1(\w)$ consist of all $u:\R^n \to \R$ such that
\begin{align*}
\|u\|_{L^1(\w)} := \int |u(y)|\w(y)dy < \8.
\end{align*}
\end{definition}

\begin{definition}[Non local operators]\label{nonlocalop}
We say that $I$ is a non local, fully non linear operator with respect to $\w$, if for every $u(\cdot,t) \in C^{1,1}(x) \cap L^1(\w)$, $Iu(x,t)$ is a well defined real number.
\end{definition}

For $\s \in (0,2)$ fixed, linear operators with kernels of the form $K \sim |y|^{-(n+\s)}$ or combinations of those (by taking supremums and infimums) are contained by this definition. In such cases we say that the operator have order $\s$ and use $\w = 1/(1+|y|^{n+\s})$.  

We say that $I$ is translation invariant in space if
\begin{align*}
I\t_{(x-y,0)}u(y,s) = Iu(x,t),
\end{align*}
where $\t$ is the shift operator,
\begin{align*}
\t_{(x,t)}u(y,s) = u(y+x,s+t).
\end{align*}


\subsection{Continuous operators}

For time dependent problems, the natural topology to use in $\R^n\times\R$ is the so called parabolic topology. It is generated by neighborhoods of the form $B_r(x)\times(t,t-\t]$, for a given point $(x,t) \in \R^n\times\R$. For instance, in this topology a function $f$ is continuous if and only if $f(y,s) \to f(x,t)$ as $(y,s)\to(x,t^-)$.

Whenever we want to see that a linear operator $L$ has ``continuous coefficients" we fix a smooth test function $u$ in an open set $O\ss \R^n\times\R$ and check if $Lu$ evaluated at $O$ is continuous. In the non local case, we need to use not only functions which are smooth in $O$ but also that the contributions from their tails vary in a sufficiently smooth way. This is the motivation to introduce the following space.

\begin{definition}
Let $C(a,b;L^1(\w))$ be the space of function $u:(a,b]\to\R$ such that
\begin{enumerate}
\item for every $t\in(a,b]$, $u(\cdot,t)\in L^1(\w)$,
\item for every $t_2 \in (a,b]$, $\|u(\cdot,t_1)-u(\cdot,t_2)\|_{L^1(\w)} \to 0$ as $t_1\to t_2^-$.
\end{enumerate}
It comes additionally with the norm,
\begin{align*}
\|u\|_{C(a,b;L^1(\w))} = \sup_{t\in(a,b]}\|u(\cdot,t)\|_{L^1(\w)}.
\end{align*}
\end{definition}

The space of functions against which we test the continuity of $I$ are given by parabolic second order polynomials and functions in $C(a,b;L^1(\w))$.

\begin{definition}[Test functions]
The space $S = S(\W\times(-T,0])$ of test functions is the set of all pairs $(v,B_r(x)\times(t-\t,t])$ such that $v \in C(t-\t,t;L^1(\w))$, $B_r(x)\times(t-\t,t] \ss \W\times(-T,0]$ and $v$ restricted to $B_r(x)\times(t-\t,t]$ is a quadratic parabolic polynomial, i.e.
\begin{align*}
v(x,t) = \sum_{i,j=1}^n a_{i,j}x_ix_j + \sum_{i=1}^n b_ix_i + ct + d. 
\end{align*}
\end{definition}

\begin{definition}[Continuous operators]
We say that a non local operator $I$, with respect to $\w$, depends continuously on the position in $\W\times(-T,0]$ if for every $(v,B_r(x)\times(t-\t,t]) \in S$, we have that $Iu$ is a continuous function in $B_r(x)\times(t-\t,t]$ (with respect to the parabolic topology).
\end{definition}

We can understand a little bit better how the space $C(a,b;L^1(\w))$ appears as a requirement for the continuity of the operator in time. Without this condition, even the fractional laplacian would not be a continuous operator in $B_1\times(-1,0]$ with respect to any positive $\w$. Take for example $u$ equal to zero in $B_1\times(-1,0]$ and let vary $u$ freely outside $B_1\times(-1,0]$.


\subsection{Ellipticity.}

In the classical stationary case ellipticity means that, for a solution $u$ of a homogeneous problem, the positive eigenvalues of its Hessian control the negative ones and vice versa. Geometrically, the positive and negative curvatures of the graph of $u$ control each other. A way to define this precisely is by imposing the following condition on  $F$,
\begin{align*}
\cM^-(D^2(u-v)) \leq F(D^2u) - F(D^2v) \leq \cM^+(D^2(u-v)),
\end{align*}
where we are using the notation from \cite{CC}.

When we have in mind equations of order $\s \in (0,2)$, we may want to use $\cL_0$, the family of all linear operators $L$ which are comparable to the fractional laplacian of order $\s$ (to be defined), in order to define the ellipticity of $I$ as $\cM^-_{\cL_0}(u-v) \leq Iu - Iv \leq \cM^+_{\cL_0}(u-v)$. This family is however too big and in order to get further regularity we need to impose further assumptions on the linear operators. By this reason we give a definition of ellipticity which is more general.

\begin{definition}\label{ellipticoperator}
Let $\cL$ be a class of linear integro differential operators. We say that a fully non linear operator $I$ is elliptic with respect to the class $\cL$ if 
\begin{align}\label{ellipticity}
\cM^-_{\cL}(u-v) \leq Iu-Iv \leq \cM^+_{\cL}(u-v).
\end{align}
\end{definition}

Going back to the definition of $\cL_0 = \cL_0(\L,\s)$ ($\L\geq 1$). The precise condition for $L$ to be in $\cL_0$ with kernel $K$ is the following one,
\begin{align}
(2-\s)\frac{\L^{-1}}{|y|^{n+\s}}\leq K(y) \leq (2-\s)\frac{\L}{|y|^{n+\s}}.
\end{align}

In this family the extremal operators take the explicit form
\begin{align*}
\cM^+_{\cL_0} v(x,t) :&= \sup_{L\in\cL_0}(Lv)(x,t)\\
 &= (2-\s)\int\limits_{\R^n}\frac{\L\d^+(v,x,t;y)-\L^{-1}\d^-(v,x,t;y)}{|y|^{n+\s}}dy,\\
\cM^-_{\cL_0} v(x,t) :&= \inf_{L\in\cL_0}(Lv)(x,t)\\
& = (2-\s)\int\limits_{\R^n}\frac{\L^{-1}\d^+(v,x,t;y)-\L\d^-(v,x,t;y)}{|y|^{n+\s}}dy.
\end{align*}
Where $\d^\pm$ denote the positive and negative parts of $\d$ ($\d = \d^+ - \d^-$).

The factors $(2-\s)$ become important as $\s \to 2^-$ as they will allow us to recover second order differential operators.

H\"older regularity for the spatial gradient of $u$ requires ellipticity with respect to a smaller class. Given $\r_0>0$ we define $\cL_1 = \cL_1(\s,\L,\r_0) \ss \cL_0(\s,\L)$ by the family of operators $L \in \cL_1$ with kernel $K$, such that
\begin{align}\label{L1}
\int_{\R^n\sm B_{\r_0}}\frac{|K(y)-K(y-h)|}{|h|}dy \leq \L,
\end{align}
for every $|h| \leq \r_0/2$. It is sufficient that $|DK|\leq \L/(1+|y|^{n+\s}))$ for \eqref{L1} to hold. An important property of this stronger condition is that the parabolic equations associated with them remain invariant under scaling. We will not bother to give this family a specific notation because we will not use it in this work.


\subsection{Viscosity solutions.}

For viscosity solutions of an equation $u_t-Iu=f$ we always assume a minimum requirement of continuity for $u$. Here we denote the space of upper semicontinuous functions in $\bar\Omega\times[-T,0]$, always with respect to the parabolic topology, by $USC(\bar\Omega\times[-T,0])$. Similarly, $LSC(\bar\Omega\times[-T,0])$ denotes the space of lower semicontinuous functions in $\bar\Omega\times[-T,0]$.

With respect to the time derivative, it is natural for the parabolic topology to consider only the values of $u$ towards the past. In this sense
\begin{align*}
u_{t^-}(x,t) = \lim_{h\to0^+}\frac{u(x,t)-u(x,t-h)}{h}.
\end{align*}

\begin{definition}\label{viscosity}
A function $u \in USC(\bar\Omega\times[-T,0])$ ($u \in LSC(\bar\Omega\times[-T,0])$), is said to be a sub solution (super solution) to $u_t - Iu=f$, and we write $u_t - Iu \leq f$ ($u_t - Iu \geq f$), if every time $(v,B_r(x)\times(t-\t,t]) \in S$ touches $u$ by above (below) at $(x,t)$, i.e.
\begin{itemize}
\item[(i)] $v(x,t)=u(x,t)$,
\item[(ii)] $v(y,s)>u(y,s)$ ($\varphi(y,s)<u(y,s)$) for every $(y,s)\in B_r(x)\times(t-\t,t]\sm\{(x,t)\}$,
\end{itemize}
then $v_{t^-}(x,t) - Iv(x,t) \leq f(x,t)$ ($v_{t^-}(x,t) - Iv(x,t) \geq f(x,t)$).
\end{definition}

An equivalent definition holds if instead of using parabolic second order polynomials as test functions we use test functions $\varphi$ with less regularity around the contact point. This is important when we want to prove the maximum principle by means of and inf and sup convolutions. We omit it here and just assume that the maximum principle for viscosity solutions holds. The ideas for the proof of this result are standard and can be found in \cite{CC} or in the appendix of \cite{S2} for the non local case with $\s=1$.

The following example illustrates the importance of having test functions in $C(a,b;L^1(\w))$. Consider $u(x,t) = \chi_{E\times\{0\}}$ where $E \subset\subset \R^n\sm \bar B_1$. In the domain $B_1\times(-1,0)$ the function $u$ satisfies $u_{t^-} + (-\D)^{\s/2}u = 0$ in the classical sense. When $t=0$ the equation is not satisfied any more, $u_{t^-}(x,0)$ is still zero in $B_1$ but $(-\D)^{\s/2}u(x,0)$ becomes strictly positive in $B_1$. If we consider now the same equation in the viscosity sense, $u$ is a solution even when $t=0$. The restriction for the test functions to be in $C(-\t,0;L^1(\w))$ (in the case the contact occurs at $t=0$) implies that such test function will not be able to see that $u$ has a jump at $t=0$.


\subsection{Qualitative properties}

Most of the qualitative behavior of solutions of fully non linear non local operators, as considered by us, have been already proven in \cite{C1} or in the appendix of \cite{S2}. Here we state some of the results already known, that we will need to use later on. The first lemma was proven in \cite{C1} and says that for a super solution regularity by below implies that the operator can be evaluated in the classical way. The following three results are the expected maximum and comparison principles which can be proven as in \cite{S2}. The last theorem regards to the existence of viscosity solutions of the Dirichlet problem by Perron's method, here we show that by using and appropriated barrier the solution achieves the boundary values in a continuous way.

\begin{lemma}\label{touch}
Let $I$ be a elliptic operator with respect to $\cL_0$ and $f$ a continuous function. If we have a super solution, $u_t - Iu = f$ in $\W\times(-T,0]$ and $\varphi$ is a $C^2$ function that touches $u$ from below at a point $(x_0,t_0)$, then $Iu(x_0,t_0)$ is defined in the classical sense and $\varphi_{t^-}(x_0,t_0) - Iu(x_0) \geq f(x_0,t_0)$.
\end{lemma}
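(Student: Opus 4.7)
The plan follows the standard strategy of \cite{C1}, adapted to the parabolic setting: construct a smooth modification of $u$ that agrees with $\varphi$ inside a shrinking parabolic cylinder and with $u$ outside, then pass to the limit. For each sufficiently small $r > 0$ set
\[
v_r(x, t) := \begin{cases} \varphi(x, t), & (x, t) \in B_r(x_0) \times (t_0 - r, t_0], \\ u(x, t), & \text{otherwise.} \end{cases}
\]
Because $\varphi$ strictly touches $u$ from below in a neighborhood of $(x_0, t_0)$ (which we may always arrange by subtracting a small paraboloid and sending the perturbation to zero at the end), $v_r$ is $C^2$ inside $B_r(x_0) \times (t_0 - r, t_0]$, satisfies $v_r < u$ there except at $(x_0, t_0)$, and its tail inherits $C(t_0 - r, t_0; L^1(\w))$ regularity from $u$. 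Hence $v_r$ is an admissible test function touching $u$ from below at $(x_0, t_0)$.

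Plugging $v_r$ into the definition of super solution gives
\[
\varphi_{t^-}(x_0, t_0) - I v_r(x_0, t_0) \geq f(x_0, t_0),
\]
so $I v_r(x_0, t_0)$ is bounded above uniformly in $r$. The family $\{v_r\}$ is pointwise monotone nondecreasing as $r$ decreases (shrinking the replacement region swaps the smaller $\varphi$ for the larger $u$). Ellipticity with respect to $\cL_0$ propagates this monotonicity to the operator: for $r > r'$, the function $w := v_r - v_{r'}$ is nonpositive everywhere and vanishes at $(x_0, t_0)$, hence $\d^+(w, x_0, t_0; \cdot) \equiv 0$ and
\[
I v_r(x_0, t_0) - I v_{r'}(x_0, t_0) \leq \cM^+_{\cL_0}(w)(x_0, t_0) \leq 0.
\]
Therefore $\lim_{r \downarrow 0} I v_r(x_0, t_0)$ exists and is finite.

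To identify this limit with $I u(x_0, t_0)$ in the classical sense, pick any linear $L \in \cL_0$ with kernel $K$ and split
\[
L v_r(x_0, t_0) = \int_{|y| < r} \d(\varphi, x_0, t_0; y) K(y) dy + \int_{|y| \geq r} \d(u, x_0, t_0; y) K(y) dy.
\]
The first term vanishes as $r \to 0$ from $|\d(\varphi)| \leq C|y|^2$ together with the integrability condition \eqref{intcond}. For the second, the touching from below yields $\d(u, x_0, t_0; y) \geq \d(\varphi, x_0, t_0; y) \geq -C|y|^2$ near the origin, which combined with the tail control from $u(\cdot, t_0) \in L^1(\w)$ implies that $\d^-(u, x_0, t_0; \cdot) K(\cdot)$ is globally integrable. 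Monotone convergence then forces the second integral to increase to $\int \d(u, x_0, t_0; y) K(y) dy \in (-\infty, +\infty]$, which is $L u(x_0, t_0)$ in the classical sense. Passing this pointwise limit through the inf--sup structure defining $I$ (using that each $\sup_\a$ and $\inf_\b$ commutes with monotone limits), or equivalently through the ellipticity bound $|I v_r - I v_{r'}| \leq \max(|\cM^{\pm}_{\cL_0}(v_r - v_{r'})(x_0, t_0)|)$ used to identify the common limit, one obtains $\lim_r I v_r(x_0, t_0) = I u(x_0, t_0)$ classically. Combined with the super solution inequality above, this gives $\varphi_{t^-}(x_0, t_0) - I u(x_0, t_0) \geq f(x_0, t_0)$.

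The main subtle point is this last identification for a general operator only assumed $\cL_0$-elliptic, where $I u$ must be interpreted through the limiting procedure itself since $u$ is not a priori $C^{1,1}$. The touching from below is what makes $\d^-(u) K$ integrable near the origin and what controls the deviation $v_r - u$ uniformly in the extremal operators, so that the limit is insensitive to the particular choice of $\varphi$ and the smoothing scale $r$.
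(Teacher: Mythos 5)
The paper does not prove Lemma~\ref{touch} at all: in Section~\ref{DAP} it explicitly states that these qualitative facts were ``already proven in \cite{C1} or in the appendix of \cite{S2}'' and simply records the statement. So there is no in-paper proof to compare against. Your argument is the standard one from Lemma~4.3 of \cite{C1}, adapted to the parabolic setting, and the overall structure is sound: the replacements $v_r$ that equal $\varphi$ in a small parabolic cylinder and $u$ outside, the monotone increase $v_r\uparrow u$ as $r\downarrow 0$, the uniform upper bound on $Iv_r(x_0,t_0)$ from the supersolution inequality, and the pointwise identification $Lv_r(x_0,t_0)\to Lu(x_0,t_0)$ for each $L\in\cL_0$ by monotone convergence are exactly the cited route.

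Two points in the final paragraph need tightening. First, the remark that ``each $\sup_\a$ and $\inf_\b$ commutes with monotone limits'' is false for the infimum: if $g_r\uparrow g$ pointwise, $\inf_\b g_r$ can stay strictly below $\inf_\b g$ in the limit. So you must rely on the ellipticity route you offer as the alternative. Second, that alternative is stated too briefly: to pass from $|Iv_r-Iv_{r'}|\leq\max\bigl(\cM^+_{\cL_0}(v_r-v_{r'}),\,-\cM^-_{\cL_0}(v_r-v_{r'})\bigr)$ at $(x_0,t_0)$ to the Cauchy property, you need $\cM^\pm_{\cL_0}(v_r-v_{r'})(x_0,t_0)\to 0$, and this is not immediate because $\d(v_r-v_{r'},x_0,t_0;\cdot)$, while supported in an annulus $\{r'<|y|<r\}$, equals $\d(u,x_0,t_0;\cdot)-\d(\varphi,x_0,t_0;\cdot)$ there and so is not $O(|y|^2)$. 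What saves the argument is the chain you have already set up: the supersolution test applied to $\cM^-_{\cL_0}v_r$ bounds $\int\d^+(v_r)K$ uniformly, monotone convergence then gives $\int\d^+(u,x_0,t_0;\cdot)K<\8$, the touching from below plus the standing assumption $u(\cdot,t_0)\in L^1(\w)$ give $\int\d^-(u,x_0,t_0;\cdot)K<\8$, and then absolute continuity of the finite integral $\int|\d(u,x_0,t_0;\cdot)|K$ together with $\int_{|y|<r}|\d(\varphi)|K\lesssim r^{2-\s}$ forces $\cM^\pm_{\cL_0}(v_r-v_{r'})(x_0,t_0)\to 0$. With that step made explicit the proof is complete; you may also want to note the standing hypothesis $u(\cdot,t_0)\in L^1(\w)$, which the lemma statement leaves implicit but which your tail estimates use.
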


\begin{theorem}[Equation for the difference of solutions]\label{Eqdiff}
Let $I$ be a continuous elliptic operator with respect to $\cL_0$ and $f$ and $g$ continuous functions. Given $u$ and $v$ such that $u_t - Iu \leq f$ and $v_t - Iv \geq g$ hold in $\W\times(-T,0]$ in the viscosity sense, then $(u - v)_t - \cM^+_{\cL}(u - v) \leq f-g$ also holds in $\W\times(-T,0]$ in the viscosity sense.
\end{theorem}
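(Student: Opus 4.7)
The plan is to reduce to a pointwise almost-everywhere inequality via sup- and inf-convolutions of $u$ and $v$, exploit ellipticity at twice-differentiable points, and then pass to the limit by stability. Concretely, for small $\e>0$ set
\[
u^\e(x,t) = \sup_{(y,s)}\left(u(y,s) - \e^{-1}(|x-y|^2 + |t-s|^2)\right),
\]
\[
v_\e(x,t) = \inf_{(y,s)}\left(v(y,s) + \e^{-1}(|x-y|^2 + |t-s|^2)\right),
\]
so that $u^\e \searrow u$ is parabolically semi-convex and $v_\e \nearrow v$ is parabolically semi-concave on a slightly smaller subdomain $\W'\times(-T',0]$.

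The first step is to show that $u^\e$ is a viscosity subsolution of $u^\e_t - Iu^\e \leq f + \eta(\e)$ in $\W'\times(-T',0]$ with $\eta(\e)\to 0$, and symmetrically for $v_\e$. This is standard when $I$ is translation invariant; in the present generality it uses the continuity of $I$ together with the $L^1(\w)$-continuity of shifts to control the oscillation of $I$ over the $O(\sqrt\e)$ displacement that realizes the supremum. Since the regularization moves both spatial tails and time tails, this is the subtlest step and is precisely why the definition of continuity is formulated on the class $C(a,b;L^1(\w))$.

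The second step combines the two inequalities. The difference $w^\e := u^\e - v_\e$ is parabolically semi-convex, hence by Alexandrov's theorem is twice differentiable in space with a classical left time derivative at almost every point $(x,t)$. At any such point, a paraboloid touches $u^\e$ from below and $v_\e$ from above, so Lemma \ref{touch} guarantees that $Iu^\e(x,t)$ and $Iv_\e(x,t)$ exist in the classical sense and
\[
u^\e_t(x,t) - Iu^\e(x,t) \leq f(x,t) + \eta(\e), \qquad v_{\e,t}(x,t) - Iv_\e(x,t) \geq g(x,t) - \eta(\e).
\]
Subtracting and applying the ellipticity of $I$ with respect to $\cL_0$, namely $Iu^\e - Iv_\e \leq \cM^+_{\cL_0}(u^\e - v_\e)$, yields
\[
w^\e_t - \cM^+_{\cL_0} w^\e \leq f - g + 2\eta(\e) \qquad \text{a.e. in } \W'\times(-T',0].
\]

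The third step promotes this pointwise a.e. inequality to a viscosity inequality and passes to the limit. For a semi-convex function the standard Jensen perturbation trick shows that whenever a test function in $S$ touches $w^\e$ from above at a point, the touching can be shifted to an Alexandrov point where the classical inequality above applies; continuity of $\cM^+_{\cL_0}$ on test functions then transfers the inequality back to the original contact point with an error going to zero as the perturbation vanishes. Hence $w^\e$ is a viscosity subsolution of $w^\e_t - \cM^+_{\cL_0} w^\e \leq f - g + 2\eta(\e)$. Finally, $w^\e \to u - v$ uniformly on compact subsets of $\W'\times(-T',0]$, and the standard stability of viscosity solutions under uniform convergence for continuous operators (used here for $\cM^+_{\cL_0}$) gives $(u-v)_t - \cM^+_{\cL_0}(u-v) \leq f-g$ in the viscosity sense in $\W\times(-T,0]$ after letting $\W'\to\W$, $T'\to T$, and $\e\to 0$. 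The main obstacle, as anticipated, is the first step: the sup- and inf-convolutions act globally, so controlling the change in $I$ from $u$ to $u^\e$ requires the parabolic $L^1(\w)$-continuity built into the definition of continuous nonlocal operators.
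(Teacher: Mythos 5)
The paper does not actually prove this theorem; in Section 2.5 it states that the maximum/comparison results "can be proven as in \cite{S2}" and simply records the statement. Your overall plan (parabolic sup/inf-convolutions, classical evaluation at regular points via Lemma \ref{touch}, ellipticity to compare $Iu^\e - Iv_\e$ with $\cM^+_{\cL_0}(u^\e - v_\e)$, then a limit $\e\to0$) is the standard Caffarelli--Silvestre argument and is the right strategy.

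There is, however, a genuine error in the middle step. You apply Alexandrov's theorem to $w^\e = u^\e - v_\e$ and then claim that "a paraboloid touches $u^\e$ from below and $v_\e$ from above, so Lemma \ref{touch} guarantees that $Iu^\e$ and $Iv_\e$ exist in the classical sense." This is backwards. Touching $u^\e$ from below is automatic from its semi-convexity (at \emph{every} interior point, not just Alexandrov points), and touching $v_\e$ from above is automatic from its semi-concavity; those contacts carry no information. Lemma \ref{touch} requires touching the \emph{supersolution} $v_\e$ from \emph{below}, and the symmetric statement requires touching the \emph{subsolution} $u^\e$ from \emph{above}: it is the contact on the hard side that lets one evaluate the nonlocal operator classically and extract the inequality. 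Moreover, twice differentiability of $w^\e$ at a point does not imply that $u^\e$ or $v_\e$ is twice differentiable there, so Alexandrov applied to $w^\e$ alone does not produce the paraboloids you need on either function.

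A clean fix exists: apply Alexandrov to $u^\e$ and $v_\e$ separately, so that almost everywhere $u^\e$ is touched from above and $v_\e$ is touched from below by paraboloids; then Lemma \ref{touch} and its subsolution counterpart yield the two inequalities, and subtraction plus ellipticity closes the pointwise step, after which Jensen's lemma and the limit go through as you describe. Even simpler (and closer to what \cite{C1} does in the elliptic case), one can avoid Alexandrov and Jensen entirely: given a test function $\varphi$ touching $w^\e$ from above at $(x_0,t_0)$, semi-convexity provides a paraboloid below $u^\e$ at $(x_0,t_0)$, while $\varphi$ plus a paraboloid above $v_\e$ (from semi-concavity) provides a $C^{1,1}$ function above $u^\e$ at $(x_0,t_0)$; so $u^\e$ is punctually $C^{1,1}$ there, and likewise for $v_\e$, and one works directly with the viscosity definition rather than via an a.e.\ inequality. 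Finally, note your two-sided time regularization looks into the future; a one-sided (past-only) sup/inf-convolution in $t$ is the safer choice given the $u_{t^-}$ convention used in the paper's viscosity definition.
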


\begin{theorem}[Maximum principle]
Let $u$ be a viscosity super solution of
\begin{align*}
 u_t - \cM^-_{\cL_0}u \geq 0 \text{ in } \W\times(-T,0].
\end{align*}
Then
\begin{align*}
\inf_{\bar \W \times [-T,0]}u = \inf_{((\R^n \sm \W)\times(-T,0]) \cup (\R^n\times\{-T\})}u.
\end{align*}
\end{theorem}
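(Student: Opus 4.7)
The plan is to argue by contradiction. Write $m := \inf_{\bar\W \times [-T, 0]} u$ and $M := \inf_{((\R^n \sm \W) \times (-T, 0]) \cup (\R^n \times \{-T\})} u$, and suppose $m < M$. The strategy is the classical parabolic minimum principle trick: perturb $u$ by $\e t$ to produce a strict supersolution, locate the minimum of the perturbation strictly away from the parabolic boundary, and then use Lemma \ref{touch} together with the explicit formula for $\cM^-_{\cL_0}$ to reach a contradiction.

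First I would set $v(x,t) := u(x,t) + \e t$ for small $\e > 0$. Since $\e t$ has no spatial dependence, $\cM^-_{\cL_0}(\e t) = 0$, so $v$ is still a viscosity supersolution, now of $v_t - \cM^-_{\cL_0} v \geq \e$. Assuming $\W$ is bounded, $\bar\W \times [-T,0]$ is compact and the lower semicontinuous function $v$ attains its minimum at some $(x_0, t_0)$. Evaluating $v$ at any point where $u = m$ yields $\inf v \leq m$, while on the parabolic boundary $v \geq M - \e T$. Choosing $\e < (M-m)/T$ forces $(x_0, t_0) \in \W \times (-T,0]$.

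Next I would check that $v(\cdot, t_0)$ actually attains a global spatial minimum at $x_0$: for $x \in \bar\W$ this follows directly from the choice of $(x_0, t_0)$; for $x \in \R^n \sm \bar\W \ss \R^n \sm \W$ the point $(x, t_0)$ lies on the parabolic boundary, hence $v(x, t_0) \geq M - \e T > v(x_0, t_0)$. The constant function $\varphi \equiv v(x_0, t_0)$ is then a $C^2$ function touching $v$ from below at $(x_0, t_0)$, so Lemma \ref{touch} gives $\cM^-_{\cL_0} v(x_0, t_0)$ in the classical sense together with $\varphi_{t^-}(x_0, t_0) - \cM^-_{\cL_0} v(x_0, t_0) \geq \e$, i.e.\ $\cM^-_{\cL_0} v(x_0, t_0) \leq -\e$.

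On the other hand, the global spatial minimum at $x_0$ makes $\d(v, x_0, t_0; y) \geq 0$ for every $y$, so $\d^-(v, x_0, t_0; y) \equiv 0$, and the explicit formula for $\cM^-_{\cL_0}$ returns $\cM^-_{\cL_0} v(x_0, t_0) \geq 0$; this contradicts the previous bound and closes the argument. The only real subtlety is the bookkeeping: using the strict gap $m < M$ and the slope $\e$ to simultaneously push the perturbed minimum into the open set $\W \times (-T,0]$ and to promote it from a minimum on $\bar\W$ to a genuine global spatial minimum on all of $\R^n$, which is exactly what makes the nonlocal operator computable and bounded from below at $(x_0, t_0)$.
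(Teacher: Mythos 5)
Your argument is correct and is essentially the standard one; the paper itself does not supply a proof, deferring to the appendix of \cite{S2}, and the strategy you use---perturb by $\e t$ to get a strict supersolution, locate the minimum of the perturbed function off the parabolic boundary, observe that it is then a global spatial minimum, and invoke Lemma \ref{touch} together with the sign of $\cM^-_{\cL_0}$ at a global minimum---is exactly the expected route. A genuine comparison principle between two semicontinuous viscosity sub/supersolutions would need sup- and inf-convolutions to make both sides pointwise evaluable, which is what \cite{S2} does; but here you are comparing against a constant, so the constant itself is the test function and Lemma \ref{touch} applies directly, which is why the bare-hands argument suffices.

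Two remarks on precision. First, you correctly note that you need $\W$ bounded so that $\bar\W\times[-T,0]$ is compact and the lower semicontinuous $v$ attains its infimum; the theorem does not state this, but it is implicit throughout the paper (the domains of interest are balls). Second, your argument establishes $\inf_{\bar\W\times[-T,0]}u \geq \inf_{((\R^n\sm\W)\times(-T,0])\cup(\R^n\times\{-T\})}u$, and does not address the opposite inequality; in fact the opposite inequality is \emph{false} in general for supersolutions. For instance take $u(x,t)=g(|x|)$ smooth, radial, time-independent, equal to a large constant on a neighborhood of $\bar\W$ and decaying to zero at infinity: then $u_t-\cM^-_{\cL_0}u=-\cM^-_{\cL_0}u\geq 0$ in $\W\times(-T,0]$ because $\d(u,x;y)\leq 0$ there, so $u$ is a supersolution, yet $\inf_{\bar\W\times[-T,0]}u$ exceeds $\inf_{\R^n\times\{-T\}}u$. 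So the ``$=$'' in the statement should really be ``$\geq$''; this is a (harmless) imprecision in the theorem as written, since only the ``$\geq$'' direction is ever used (in the comparison principle corollary and in the ABP), not a gap in your proof.
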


\begin{corollary}[Comparison principle]
Let $I$ be a continuous elliptic operator with respect to $\cL_0$, $u$ be a viscosity sub solution and $v$ be a viscosity super solution of
\begin{align*}
 w_t - Iw = f \text{ in } \W\times(-T,0].
\end{align*}
Then $u \leq v$ in $((\R^n \sm \W)\times(-T,0]) \cup (\R^n\times\{-T\})$ implies $u\leq v$ in $\W\times(-T,0]$.
\end{corollary}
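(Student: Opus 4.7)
The plan is the standard two-step reduction: use Theorem~\ref{Eqdiff} to get a single inequality for the difference $u-v$, then invoke the maximum principle.

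First I would apply the equation for the difference of solutions (Theorem~\ref{Eqdiff}) with the same $f$ on both sides, using that $u$ is a viscosity subsolution and $v$ is a viscosity supersolution of $w_t-Iw=f$. This yields
\begin{align*}
(u-v)_t - \cM^+_{\cL_0}(u-v) \leq f - f = 0 \quad \text{in } \W\times(-T,0]
\end{align*}
in the viscosity sense. Note that $u-v \in USC$ since $u \in USC$ and $-v \in USC$, so this subsolution statement is well posed.

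Next, I would rewrite this in terms of $w := v-u$, which is LSC. Since every $L \in \cL_0$ is linear and $\cL_0$ is symmetric in the sense that $\cM^+_{\cL_0}(\varphi) = -\cM^-_{\cL_0}(-\varphi)$, plugging test functions $\psi$ touching $v-u$ from below and looking at $-\psi$ touching $u-v$ from above turns the above subsolution inequality for $u-v$ into the supersolution inequality
\begin{align*}
w_t - \cM^-_{\cL_0} w \geq 0 \quad \text{in } \W\times(-T,0]
\end{align*}
for $w=v-u$, again in the viscosity sense. The hypothesis $u \leq v$ on $((\R^n\sm\W)\times(-T,0])\cup(\R^n\times\{-T\})$ translates to $w \geq 0$ there.

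Finally, I would apply the maximum principle (the preceding Theorem) to $w$, which gives
\begin{align*}
\inf_{\bar\W\times[-T,0]} w = \inf_{((\R^n\sm\W)\times(-T,0])\cup(\R^n\times\{-T\})} w \geq 0,
\end{align*}
so $v \geq u$ on $\W\times(-T,0]$, as claimed. The only non-routine point is the symmetry step converting a viscosity statement for $\cM^+_{\cL_0}(u-v)$ into one for $\cM^-_{\cL_0}(v-u)$; this is where one needs that the class $\cL_0$ is closed under $L \mapsto L$ (trivially) and that each $L$ is linear, which is immediate from the definition of $\cL_0$. Everything else is a direct citation of the results already assembled in this section.
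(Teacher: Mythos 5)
Your argument is correct and is exactly the proof the paper has in mind: the corollary is stated immediately after Theorem~\ref{Eqdiff} and the maximum principle precisely so that it follows by applying the former (with $g=f$) to get $(u-v)_t - \cM^+_{\cL_0}(u-v)\leq 0$, flipping the sign via the linearity of each $L\in\cL_0$ (so that $\cM^+_{\cL_0}(-\varphi)=-\cM^-_{\cL_0}\varphi$) to obtain $(v-u)_t-\cM^-_{\cL_0}(v-u)\geq 0$ for the LSC function $v-u$, and then invoking the maximum principle. The paper itself omits the proof, referring to~\cite{S2}, but the route you chose is the one the section is visibly set up to support, and your handling of the viscosity-sense sign flip (test functions touching from below become, after negation, test functions touching from above) is the only point that needs care and you got it right.
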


Existence and uniqueness of a solution in the viscosity sense follows from the comparison principle by using Perron's method. The additional ingredient we need is a barrier that guarantees that the boundary and initial values are attained in a continuous way.

\begin{lemma}\label{barrier}
Let $\s\in(0,2)$. There exists a non negative function $\psi:\R^n\times(-\8,0]\to\R$ such that:
\begin{enumerate}
 \item $\psi = 0$ in $B_1\times\{0\}$
 \item $\psi_t-\cM^+_{\cL_0(\s)}\psi \geq 0$ in $(\R^n\sm B_1) \times(-\8,0]$,
 \item $\psi \geq 1$ in $(\R^n\times(-\8,0])\sm(B_2\times[-\k,0])$,
\end{enumerate}
for some $\k$ universal.
\end{lemma}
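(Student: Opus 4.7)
The plan is to seek $\psi$ in the additive form $\psi(x,t) = \phi(x) + A|t|$, where $A>0$ is a universal constant and $\phi:\R^n\to[0,\infty)$ is a purely spatial barrier with (a) $\phi \equiv 0$ on $\bar B_1$, (b) $\phi \geq 1$ on $\R^n \setminus B_2$, and (c) $\cM^+_{\cL_0}\phi(x) \leq -A$ for every $x \in \R^n \setminus B_1$. With this choice, property (1) is immediate from (a), and setting $\k := 1/A$ gives (3): if $(x,t) \notin B_2 \times [-\k,0]$, then either $|x|\geq 2$, yielding $\psi \geq \phi(x) \geq 1$ by (b), or $|t| \geq \k$, yielding $\psi \geq A\k = 1$. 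Condition (2) reduces to (c) because $A|t|$ is independent of $x$ and hence invisible to the extremal operator, while $\psi_{t^-}(x,t) = -A$, so $\psi_{t^-} - \cM^+_{\cL_0}\psi = -A - \cM^+_{\cL_0}\phi \geq 0$.

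The entire proof therefore reduces to constructing the spatial barrier $\phi$ satisfying (a)--(c), with $A$ uniform as $\s \to 2^-$. A natural explicit candidate is the radial function $\phi(x) = c_0\min(1,\,((|x|^2-1)_+)^{\s/2})$ (possibly smoothed near $|x|=2$), which already satisfies (a) and (b) for a suitable universal $c_0$. For (c) one fixes $x$ with $|x|>1$ and splits the integral defining $\cM^+_{\cL_0}\phi(x)$ into three pieces: a near-field where $\phi$ is smooth in a neighborhood of $x$ and $\d(\phi,x;y) = O(|y|^2)$; a ``lens'' region of $y$ where at least one of $x\pm y$ falls into $B_1$, forcing $\phi$ to vanish there and giving a strongly negative $\d$; and a far field where $\d$ is uniformly bounded by $2c_0$. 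The exponent $\s/2$ is precisely the one for which the negative lens contribution has the right scaling to dominate, and the $(2-\s)$ prefactor in $\cM^+_{\cL_0}$ is designed to absorb the near-origin singularity of the kernel as $\s\to 2^-$, yielding a uniform bound of the form $\cM^+_{\cL_0}\phi(x) \leq -c(n,\L)<0$ independent of $\s\in(0,2)$.

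The main obstacle is exactly this uniform-in-$\s$ estimate (c) near $\partial B_1$, where $\phi$ has a H\"older-type degeneracy; away from $\partial B_1$ the estimate follows from $\phi$ being bounded and concave in $|x|$ together with a routine tail bound. Once (c) is in hand one sets $A := c(n,\L)$ and $\k := 1/A$, both universal, completing the construction.
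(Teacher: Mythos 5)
Your ansatz $\psi(x,t) = \phi(x) + A|t|$ has a genuine gap in verifying property~(2). You reduce (2) to the inequality $\cM^+_{\cL_0}\phi(x) \leq -A$ for \emph{every} $x\in\R^n\sm B_1$, but no bounded $\phi$ that is eventually constant (as yours is, with $\phi\equiv c_0$ for $|x|\geq 2$) can satisfy this. Indeed, for $|x|$ large, $\phi$ is constant in a neighborhood of $x$ and $\phi\leq c_0$ everywhere, so $\d(\phi,x;y)\leq 0$ for all $y$ and it vanishes unless $|x+y|<2$ or $|x-y|<2$, i.e.\ unless $|y|\geq|x|-2$; consequently $\cM^+_{\cL_0}\phi(x)$ is nonpositive and tends to $0$ as $|x|\to\8$. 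Thus $\cM^+_{\cL_0}\phi$ cannot be bounded away from zero on $\R^n\sm B_1$, and $\psi_{t^-}-\cM^+_{\cL_0}\psi = -A - \cM^+_{\cL_0}\phi$ becomes strictly negative for $|x|$ large, so (2) fails.

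The paper avoids this precisely by truncating the barrier at height $1$: it takes $\psi = (\varphi - \k^{-1}t)\wedge 1$, where $\varphi$ is the spatial barrier of Corollary~3.2 in \cite{C1} and $\k^{-1} = \inf_{B_2\sm B_1}|\cM^+\varphi|\wedge 1$. Wherever $\psi = 1$ one has $\psi_{t^-}=0$ and $\cM^+_{\cL_0}\psi\leq 0$ (since $\psi$ attains its global maximum there), so (2) holds trivially; the strong negativity of $\cM^+\varphi$ is only needed on the bounded set $\{\varphi<1\}\cap(\R^n\sm B_1)$, which is why the infimum defining $\k$ is taken only over $B_2\sm B_1$. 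The same repair would fix your construction: replace $\psi$ by $\min(\phi(x)+A|t|,\,1)$ with $c_0\geq 1$, so that $\{\phi<1\}\ss B_2$ and one only needs $\cM^+_{\cL_0}\phi\leq -A$ on $B_2\sm B_1$. Even then, you would still have to carry out the claimed uniform-in-$\s$ estimate on that annulus for your profile $(|x|^2-1)_+^{\s/2}$, which your write-up only sketches; the barrier in \cite{C1} is built from a high negative power $|x|^{-p}$ and the corresponding estimate is proved there.
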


\begin{proof}
 Let $\varphi = \varphi(x)$ be the one from corollary 3.2 in \cite{C1} and
\begin{align*}
 \k^{-1} = \inf_{B_2\sm B_1}|\cM^+\varphi|\wedge1.
\end{align*}
Then $\psi = (\varphi - \k^{-1} t) \wedge 1$ satisfy all the requirements.
\end{proof}

\begin{theorem}[Existence]
Let $\s\in(0,2)$, $\W$ be a smooth domain, $I$ a continuous elliptic operator with respect to $\cL_0$ and $f$ and $g$ bounded, continuous functions. The Dirichlet problem, 
\begin{align*}
 u_t - Iu &= f \text{ in } \W\times(-T,0],\\
 u &= g \text{ in } ((\R^n \sm \W)\times(-T,0]) \cup (\R^n\times\{-T\}),
\end{align*}
has a unique viscosity solution $u$.
\end{theorem}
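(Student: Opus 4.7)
The plan is to run Perron's method, whose ingredients are the comparison principle (already established), a global sub--super solution sandwich, and barriers for continuous attainment of the data. The affine-in-time functions $v^\pm(x,t) := \pm(M+(T+t)\|f\|_{L^\8})$ with $M \geq \|g\|_{L^\8}$ are a global sub and super solution of $w_t-Iw=f$, since the extremal operators $\cM^\pm_{\cL_0}$ annihilate affine functions (second differences vanish), and $v^-\leq g\leq v^+$ on the parabolic boundary. Consider
\begin{align*}
\cF = \{v\in USC(\bar\W\times[-T,0]) : v_t-Iv\leq f \text{ in } \W\times(-T,0],\ v\leq g \text{ on the parabolic boundary}\},
\end{align*}
and define $u(x,t)=\sup\{v(x,t):v\in\cF\}$. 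Then $v^-\in\cF$ and comparison gives $v\leq v^+$ for every $v\in\cF$, so $u$ is bounded.

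Standard Perron--Ishii arguments, adapted to the non local parabolic setting as in \cite{C1} and the appendix of \cite{S2}, show that the upper semicontinuous envelope $u^*$ is a viscosity sub solution and the lower semicontinuous envelope $u_*$ is a viscosity super solution of $u_t-Iu=f$ in $\W\times(-T,0]$. The sub solution property is direct from stability of sub solutions under suprema with respect to the parabolic topology. The super solution property is proved by the usual contradiction bump: if $u_*$ failed to be a super solution at some $(x_0,t_0)$, one would modify a strict test function on a small parabolic cylinder $Q$ to produce an element of $\cF$ strictly above $u$ at $(x_0,t_0)$; the non local issue is that the bump must not alter the tail contribution from outside $Q$, which is handled exactly as in \cite{C1}.

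The main step, and the place where Lemma \ref{barrier} is used, is continuous attainment of $g$ on the parabolic boundary. Fix $(x_0,t_0)$ with $x_0\in\p\W$ and $\e>0$. By smoothness of $\W$ pick an exterior ball $B_r(y_0)\ss\R^n\sm\bar\W$ tangent at $x_0$; translating and parabolically rescaling $\psi$ by the factor $r$, using the $\s$-homogeneity of $\cL_0$, produces a non negative local barrier $\tilde\psi$ with $\tilde\psi(x_0,t_0)=0$, satisfying $\tilde\psi_t-\cM^+_{\cL_0}\tilde\psi\geq 0$ on $(\R^n\sm B_r(y_0))\times(-\8,t_0]$ and $\tilde\psi\geq 1$ outside a small parabolic neighborhood of $(x_0,t_0)$. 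Then $w_\e := g(x_0,t_0)+\e+A_\e\tilde\psi+B(t_0-t)$, with $A_\e$ large (depending on $\|g\|_{L^\8}$ and $\e$) and $B\geq\|f\|_{L^\8}$, is a super solution of $w_t-Iw\geq f$ on $\W\times(-T,t_0]$ lying above $g$ on the parabolic boundary and hence above every $v\in\cF$ by comparison; this yields $\limsup_{(x,t)\to(x_0,t_0)}u^*(x,t)\leq g(x_0,t_0)+\e$. Including mirror-image lower sub-barriers in $\cF$ gives $\liminf_{(x,t)\to(x_0,t_0)}u_*(x,t)\geq g(x_0,t_0)-\e$. For initial points $(x_0,-T)$, a pure time-cone $g(x_0,-T)\pm\e\pm B(t+T)$ works directly, without invoking Lemma \ref{barrier}. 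Letting $\e\to 0$ gives $u^*=u_*=g$ on the parabolic boundary.

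Finally, the comparison principle applied to $u^*$ and $u_*$, which coincide on the parabolic boundary, forces $u^*\leq u_*$ in $\W\times(-T,0]$; combined with the definitional $u_*\leq u^*$ this gives $u=u^*=u_*$ continuous on $\bar\W\times[-T,0]$ and solving the Dirichlet problem. Uniqueness is immediate from comparison. The principal obstacle throughout is the barrier construction of the third paragraph; the rest is a direct translation of the classical Perron scheme into the non local parabolic framework set up earlier in the paper.
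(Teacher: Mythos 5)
Your overall architecture — Perron's method with a sub/super solution sandwich, stability of sub solutions under sup, the bump argument for the super solution property of $u_*$, comparison for uniqueness, and spatial barriers (built from Lemma \ref{barrier} and an exterior ball) for boundary attainment — is essentially the route the paper takes, and the boundary portion of your argument is in good shape. You are also a bit more careful than the paper's write-up in two respects: you add the affine-in-time term $B(t_0-t)$ with $B\geq\|f\|_{L^\8}$ so the barrier genuinely dominates the right-hand side $f$, and you spell out the Perron--Ishii stability/bump steps that the paper merely cites to \cite{CIL}.

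There is, however, a genuine gap in the initial-time step. You claim that at $(x_0,-T)$ a pure time-cone $g(x_0,-T)+\e+B(t+T)$ works as an upper barrier ``without invoking Lemma \ref{barrier}.'' But the comparison principle used here is global: to conclude $u\leq w_\e$ in $\W\times(-T,0]$ you must have $w_\e\geq g$ on the \emph{entire} parabolic boundary $((\R^n\sm\W)\times(-T,0])\cup(\R^n\times\{-T\})$, in particular on all of $\R^n\times\{-T\}$. At $t=-T$ the pure time-cone equals the constant $g(x_0,-T)+\e$, while $g(\cdot,-T)$ is only continuous and may be as large as $\|g\|_{L^\8}$ elsewhere on the initial slice; for instance $g(y,-T)>g(x_0,-T)+\e$ for $y$ far from $x_0$ is perfectly consistent with the hypotheses. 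So the cone fails to sit above $g$ where comparison requires it. The paper repairs exactly this by adding a rescaled spatial bump: its initial barrier is $\b(y,s)=g(x,-T)+\e+2\|g\|_\8\{\psi((y-x)/\d,(s+T)/\d^\s)+(s+T)/\d\}$, where $\psi$ is built from a bump $b$ with $b=1$ outside $B_1$, so that the bracket is at least $1$ whenever $|y-x|\geq\d$, guaranteeing $\b\geq\|g\|_\8\geq g$ away from $(x,-T)$. Your initial-time argument needs an analogous spatial component (for example, the same $b(\cdot/\d)$ construction, or a translated-and-rescaled version of Lemma \ref{barrier} itself) before letting $\e\to 0$.
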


\begin{remark}
The boundary data $g$ only needs to be continuous at the points in $((\R^n \sm \W)\times(-T,0]) \cup (\R^n\times\{-T\})$ with respect to the parabolic topology. With respect to smoothness of the domain, we only require that $\W$ satisfies the exterior ball condition.\end{remark}

\begin{proof}
Let $u$ be the solution obtained by Perron's method,
\begin{align*}
u(x,t) = \inf\{v(x,t): &v_t - Iv \geq f \text{ in } \W\times(-T,0]\\
&v \geq g \text{ in } ((\R^n \sm \W)\times(-T,0]) \cup (\R^n\times\{-T\})\}.
\end{align*}
It can be shown that $u \in C(\bar\W\times[-T,0])$, solves $u_t - Iu = f$ in $\W\times(-T,0]$ in the viscosity sense and $u \geq g$ in $((\R^n \sm \W)\times(-T,0]) \cup (\R^n\times\{-T\})$, see \cite{CIL}. We will show now that $u$ attains the initial and boundary values by comparison with appropriated barriers.

Let's see the case of initial values first. Let $b :\R^n \to [0,1]$ a smooth bump function such that $\supp(1-b) = B_1$ and $b(0) = 0$. The function $\psi(y,s) = b(y) + \|\cM^+_{\cL_0}b\|_\8 s$ satisfies $\psi_t - \cM^+_{\cL_0}\psi \geq 0$. Let $(x,-T) \in \bar \W\times\{-T\}$ and $\e>0$ fixed. By the continuity of $g$, there exists $\d>0$ such that $|g(x,-T)-g(y,s)|\leq \e$, given that $|x-y|+ |T+s| \leq 2\d$. Consider the following barrier,
\begin{align*}
\b(y,s) = g(x,-T) + \e + 2\|g\|_\8 \3 \psi \1\frac{y-x}{\d},\frac{s+T}{\d^\s}\2 + \frac{s+T}{\d}\4.
\end{align*}
$\b$ it is constructed such that $\b_t - \cM^+_{\cL_0}\b \geq 0$. Let's see that $\b \geq g$ in $((\R^n\sm \W)\times(-T,0]) \cup (\W\times\{-T\})$ and therefore $g(x,-T) \leq u(x,-T) \leq \b(x,-T) = g(x,-T)+\e$ according to the definition of $u$. If $|x-y|+|T+s| \leq 2\d$, then $\b(y,s) \geq g(x,-T) + \e \geq g(y,s)$. If $|x-y|+|T+s| \geq 2\d$ then either $|x-y| \geq \d$ and then $b((y-x)/\d) = 1$ or $(T+s) \geq \d$ and then $(T+s)/\d \geq 1$, in any case
\begin{align*}
 \b(y,s) \geq -\|g\|_\8 + 2\|g\|_\8 \3 \psi \1\frac{y-x}{\d},\frac{T+s}{\d^\s}\2 + \frac{T+s}{\d}\4 \geq g(y,s).
\end{align*}
After having that $u(x,-T) \in [g(x,-T),g(x,-T)+\e]$ we use that $\e$ is arbitrary to conclude that $u(x,-T) = g(x,-T)$.

Let's consider now the case of boundary values. Let $(x,t) \in \p\W\times(-T,0]$ and $\e>0$ fixed. Let $\d>0$ such that $|g(x,t) - g(y,s)|\leq \e$, given that $|y-x| \leq \d$ and $s \in [t-\k\d,t]$, $\k$ is the one from Lemma \ref{barrier}. By making $\d$ even smaller we can also assume that the ball $B_{\d/4}(x-(\d/4)n)$ touches $\W$ by outside with $n$ its normal vector. Let $\psi$ the function from Lemma \ref{barrier}. The barrier,
\begin{align*}
\b(y,s) = g(x,t) + \e + 2\|g\|_\8\psi\1\frac{y-(x-(\d/4)n))4}{\d},s-t\2
\end{align*}
is constructed such that $\b_t - \cM^+_{\cL_0}\b \geq 0$. It also remain above $g$ in $((\R^n\sm \W)\times(-T,t]) \cup (\W\times\{-T\})$. If $|y-x| \leq \d$ and $s \in [t-\k\d,t]$ then $\b(y,s) \geq g(x,t) + \e \geq g(y,s)$. If $(y,s)$ is outside the cylinder $\bar B_\d(x)\times[t-\k\d,t]$, then it is also outside the cylinder $\bar B_{\d/2}(x-(\d/4)n)\times [t-\k\d,t]$, then $\psi(\frac{y-(x-(\d/4)n))4}{\d},s-t) \geq 1$ and $\b(y,s) \geq -\|g\|_\8 + 2\|g\|_\8 \geq g(y,s)$. Then we conclude as before that $u(x,t) = g(x,t)$.
\end{proof}


\section{Partial ABP Estimate}\label{SectionABP}

The classic ABP theorem says the following. If $u$ satisfies $u_t - \cM^-u \geq -f$ in $B_1\times(-1,0]$ with $u \geq 0$ in $\p B_1\times(-1,0] \cup B_1\times\{-1\}$ then,
\begin{align*}
\inf_{B_1\times(-1,0]} u^- \leq c\1\iint_{\{u=\G\}}(f^+)^{n+1}dxdt\2^{\frac{1}{n+1}},
\end{align*}
where the domain of integration $\{u=\G\}$ is the contact set of $u$ with its parabolic convex envelope $\G$.

In the non local case there is no hope to obtain a similar result by integrating only over $\{u=\G\}$. In fact, consider the function $u(x,t) = (|x|^\s-1)\chi_{B_2}(x)$. The contact set in this case has zero measure, however there is a constant $C \geq 0$ such that $u_t + (-\D)^\s u \geq -C$ holds in $B_1\times(-1,0]$.

To sort out this difficulty, we consider the set where $u$ is between $\G$ and $\G + M$, for some positive and universal $M$. The theorem we prove in this section is the following one.

\begin{theorem}\label{ABP}
Let $f \in C([-1,0])$ positive and depending only on the time variable, $\rho_0>0$ such that $1/2 + 9\sqrt n 2^{-1/(2-\s)}\r_0 < 2$ and let $u$ satisfying
\begin{align*}
u_t - \cM^-_{\cL_0} u &\geq -f\chi_{B_{1/2}} \text{ in } B_2\times(-2,0],\\
u &\geq 0  \text{ in } ((\R^n \sm B_1)\times[-2,0]) \cup (\R^n \times[-2,-1]\}),\\
\sup_{B_1\times(-1,0]}u^- &= 1.
\end{align*}
Then,
\begin{align*}
c \leq \int_{-1}^0 f(t)^{n+1}|\{u(\cdot,t) < \G(\cdot,t) + C4^{-1/(2-\s)}f(t)\}\cap B_{1/2 + 9\sqrt n 2^{-1/(2-\s)}\r_0}| dt
\end{align*}
for some constants $c$ and $C$ depending only on $n$, $\L$, $\s_0$ and $\r_0$.
\end{theorem}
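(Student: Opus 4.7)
The strategy is to replace the exact parabolic contact set $\{u=\G\}$ (which can have zero measure in the nonlocal setting) by the thickened set $\{u<\G+\eta\}$ with $\eta\sim 4^{-1/(2-\s)}f(t)$, and to produce a space--time lower bound on this set. I would let $\G$ denote the parabolic convex envelope from below of $-u^-\chi_{B_1\times(-1,0]}$, extended by $0$. Since $\sup u^-=1$, there exist contact points $(x_0,t_0)$ where $\G(x_0,t_0)=u(x_0,t_0)<0$; at any such point a spatial affine support $\ell$ satisfies $\ell\leq \G(\cdot,t_0)\leq u(\cdot,t_0)$ with equality at $x_0$, so $\d(u-\ell,x_0,t_0;y)\geq 0$ for every $y$. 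By Lemma \ref{touch} the equation holds classically at $(x_0,t_0)$, and since $\cM^-_{\cL_0}\ell=0$ I get the kernel-weighted bound
\[
(2-\s)\L^{-1}\int \d(u-\ell,x_0,t_0;y)\,\frac{dy}{|y|^{n+\s}}\;\leq\; u_{t^-}(x_0,t_0)+f(t_0),
\]
where the time derivative on the right is controlled because $\G$ is monotone nonincreasing in $t$ and $\|u^-\|_\infty=1$ caps $u_{t^-}(x_0,t_0)\leq \G_{t^-}(x_0,t_0)$.

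The bulk of the work is to turn this integral inequality into a measure estimate inside a small ball around $x_0$, and this splits into two halves. For the upper half, controlling $u-\ell$ from above, I would iterate Lemma 5.1 of \cite{S2} on a geometric sequence of dyadic annuli $B_{r_k}\setminus B_{r_{k+1}}$ centered at $x_0$: each iteration converts a kernel-weighted mass bound into a measure estimate for $\{u-\ell>\eta_k\}$ on the annulus. Carrying out $\sim 1/(2-\s)$ iterations brings the radius down to $r\sim 2^{-1/(2-\s)}\r_0$ and the threshold down to $\eta\sim 4^{-1/(2-\s)}f(t_0)$, matching the constants in the statement. The lower half, controlling $\G-\ell$ from below, is the main obstacle: $\G(\cdot,t_0)$ is spatially convex and so lies above $\ell$ if $\G$ were stationary, but the parabolic envelope can dip below $\ell$ when pulled down by smaller past values. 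I would construct a classical supersolution barrier to $w_t-\cM^-_{\cL_0}w=-f$ sitting below $u$ with quantified decay, and by the comparison principle push this below $\G$ to force $\G(y,t_0)\geq \ell(y)-Cf(t_0)$ on a fixed portion of $B_r(x_0)$. The parabolic scaling $r^\s\sim t$ is what makes the two scales match at $r\sim 2^{-1/(2-\s)}\r_0$.

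Combining the halves, at each contact time the set $\{u<\G+\eta\}\cap B_r(x_0)$ has measure at least $\theta|B_r|$ for a universal $\theta>0$. The integrated conclusion follows from a Vitali-type covering of the space--time contact set by such small balls: the factor $9\sqrt n$ in the enlarged radius is the standard Vitali dilation, while $2^{-1/(2-\s)}\r_0$ is the base radius of the covering balls. The weight $f(t)^{n+1}$ arises exactly as in the classical parabolic ABP through the area formula for the space--time normal map of $\G$: the spatial gradient $\n_x\G$ contributes $n$ powers of $f$ through its Jacobian on the contact set, and the monotone temporal part of $\G$ contributes the extra factor. Integrating in $t\in[-1,0]$ and using $\sup u^-=1$ on the left-hand side yields the universal constant $c$ as claimed.
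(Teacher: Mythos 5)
Your proposal follows essentially the same route as the paper. Both arguments split the analysis at a contact point into an upper part (iterating the dyadic-annulus measure estimate of Lemma 5.1 in \cite{S2}, roughly $1/(2-\s)$ times, which is exactly the paper's Lemma \ref{IterLemma}/Corollary \ref{Ring}) and a lower part (showing $\G$ cannot drop faster than $\sim f\,\D t$ below the supporting plane, which the paper proves via the affine test function in Lemma \ref{Deltah} and which you propose via a supersolution barrier -- same spirit, slightly more machinery than needed), and both extract the $f^{n+1}$ weight from the measure of the image of the space--time normal map $\Phi=(\p\G,h)$ over the covering pieces (the paper's Corollary \ref{measure} and Lemma \ref{SABP}) and finish by a covering argument plus the cone inclusion $\Phi(\{u=\G\})\supseteq\text{Cone}$ that supplies the lower bound $c$.

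Two small imprecisions worth flagging. First, the factor $9\sqrt n\,2^{-1/(2-\s)}\r_0$ is not a Vitali dilation constant; it comes from the specific geometry of the covering: the good annulus $S$ from Corollary \ref{Ring} must fit inside $\tilde K_j=16\sqrt n\,Q_j\times[\ldots]$, and the paper then passes to a finite-overlap subfamily of the $\tilde K_j$ via Besicovitch, slice by slice in time, and sends $\D t\to 0$ as a Riemann sum -- not a Vitali selection of balls. Second, at the contact point the affine function $\ell$ (cut off to a ball) has $\d(\ell,x_0;y)\geq 0$ rather than $\cM^-_{\cL_0}\ell=0$; this is what lets you drop $\ell$ inside the extremal operator, and the precise lower bound you need on $u_{t^-}$ comes from the viscosity supersolution inequality tested against the time-linear extension of $\ell$ (Lemma \ref{Deltah}), not merely from monotonicity of $\G$. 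Neither of these affects the viability of your strategy, which is the correct one.
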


\begin{remark}
The domain of the equation $B_2\times(-2,0]$ can be reduced to $B_{1+\e}\times(-1,0]$ and just indicates that we are going to need some room in the following proofs.
\end{remark}

\begin{remark}
The radius $\r_0$ will be a fixed universal constant in future sections. For this reason it may be noticed that we call also universal constants to some quantities that may also depend on $\r_0$.
\end{remark}

The following is a more general version of the same theorem.

\begin{theorem}\label{ABP2}
Let $\W \ss \R^n$ a bounded domain and $\W_2 \subset\subset \W_1 \subset\subset \W_0 \subset\subset \W$. Let $f \in C([-1,0])$ positive and depending only on the time variable, $d>0$ and let $u$ such that
\begin{align*}
u_t - \cM^-_{\cL_0} u &\geq -f\chi_{\W_2} \text{ in } \W\times(-d^\s,0],\\
u &\geq 0  \text{ in } ((\R^n \sm \W_0)\times(-d^\s,0]) \cup (\R^n \times\{-d^\s\}).
\end{align*}
Then
\begin{align*}
\sup_{\W_0\times(-d^\s,0]}u^- \leq C\1\int_{-d^\s}^0 f(t)^{n+1}|\{u < \G + C4^{-1/(2-\s)}f\}\cap \W_1| dt\2^{1/(n+1)}
\end{align*}
for some constant $C$, depending only on $n$, $\L$, $\s_0$, $\W_2$, $\W_1$, $\W_0$ and $\W$.
\end{theorem}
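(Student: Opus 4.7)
The plan is to deduce Theorem \ref{ABP2} from Theorem \ref{ABP} by normalization, the scale invariance of $\cL_0(\s)$, and a finite cover argument, with the geometric parameters of the nesting $\W_2 \subset\subset \W_1 \subset\subset \W_0 \subset\subset \W$ absorbed into the final constant $C$.

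First I would normalize: set $M := \sup_{\W_0 \times (-d^\s, 0]} u^-$, which we may assume positive (else the bound is trivial), and replace $(u, f)$ by $(\tilde u, \tilde f) := (u/M, f/M)$. By the hypotheses on $u$ outside $\W_0$ and at time $-d^\s$, the supremum is attained (or at least approached) at some point $(x_0, t_0) \in \W_0 \times (-d^\s, 0]$ with $\tilde u(x_0, t_0) \approx -1$. After this normalization it suffices to produce a universal constant $c>0$ with
\begin{align*}
c \leq \int_{-d^\s}^0 \tilde f(t)^{n+1}\,\bigl|\{\tilde u < \tilde \G + C 4^{-1/(2-\s)}\tilde f\}\cap \W_1\bigr|\, dt,
\end{align*}
since undoing the normalization and raising to the power $1/(n+1)$ then produces the supremum estimate claimed in Theorem \ref{ABP2}.

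Next I would exploit the scale invariance of $\cL_0(\s)$ under parabolic dilations $(y,s)\mapsto(x_*+ry, t_0+r^\s s)$, which preserves $\cM^-_{\cL_0}$ up to an $r^\s$ factor that I absorb into $\tilde f$. Fix an interior ball $B_{\r_*}(x_*)\subset\W_2$ with $x_*$ and $\r_*$ depending only on the nesting, and cover $\bar\W_0$ by a finite collection of balls $\{B_{\r_i}(x_i)\}$ whose cardinality and radii depend only on $\W_2,\W_1,\W_0,\W$. For each $i$, construct a translation and dilation sending a fixed subball of $\W_2$ onto $B_{1/2}$, a ball in $\W_0$ containing $x_i$ onto $B_1$, a ball in $\W$ onto $B_2$, and $(-2,0]$ inside $(-d^\s,0]$. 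Applying this to the cover ball containing $x_0$ brings the rescaled function into the exact setting of Theorem \ref{ABP}, whose conclusion gives the sought integral lower bound on a subset of $\W_1$.

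The main obstacle is the mismatch between the forcing region $\W_2$ and the ball $B_{1/2}$: the hypothesis $u_t - \cM^-_{\cL_0} u \geq -f\chi_{\W_2}$ is strictly weaker than $u_t - \cM^-_{\cL_0} u \geq -f\chi_{B_{1/2}}$ whenever $\W_2\supsetneq B_{1/2}$, so the rescaled function cannot be fed into Theorem \ref{ABP} as a black box. To handle this I would either (i) revisit the proof of Theorem \ref{ABP} and check that only the containment of the forcing support inside a fixed intermediate set is actually used, allowing $B_{1/2}$ to be replaced by any $\W_2 \subset\subset \W_1$; or (ii) cover $\W_2$ by finitely many balls contained in $\W_2$, run Theorem \ref{ABP} on each in turn, and add the resulting estimates. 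A secondary but routine issue is reconciling the parabolic convex envelope used inside each rescaled ball with the global envelope $\G$ appearing in the statement; the hypothesis $u\geq 0$ outside $\W_0$ makes this identification straightforward.
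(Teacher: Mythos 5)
Your proposal aims to deduce Theorem \ref{ABP2} from Theorem \ref{ABP} by normalization, rescaling, and covering. You correctly identify that the forcing-region mismatch ($\W_2$ versus $B_{1/2}$) blocks a black-box application; but there is a second, more serious mismatch that you label ``routine'' and in fact kills the reduction: the boundary condition. Theorem \ref{ABP} requires $u \geq 0$ outside $B_1$, whereas your rescaling sends some ball $B_{\r_*}(x_*) \subset \W_0$ onto $B_1$, so after rescaling you would need $u \geq 0$ on $\R^n \setminus B_{\r_*}(x_*)$. The hypothesis only gives $u \geq 0$ outside $\W_0 \supsetneq B_{\r_*}(x_*)$, and nothing prevents $u$ from being large and negative in $\W_0 \setminus B_{\r_*}(x_*)$. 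Since the partial ABP is a maximum-principle-type estimate, this failure of the boundary hypothesis is fatal, not cosmetic; the convex envelope $\G$ in the rescaled picture would have no reason to coincide with (or even control) the envelope $\G$ appearing in the statement. Your fallback (ii), tiling $\W_2$ by smaller balls and summing the resulting estimates, suffers the same defect: $\G$ and the normalizing point where $u^-$ attains its supremum are global objects relative to $\W_0$ and cannot be localized to individual balls, so Theorem \ref{ABP} cannot be invoked on a subball without reproving it in a more general geometry.

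Your fallback (i) --- rerun the proof of Theorem \ref{ABP} with general domains --- is in fact the intended route (the paper gives no separate proof and calls Theorem \ref{ABP2} ``a more general version of the same theorem''), but as written it is an acknowledgment of a gap rather than an argument. The substance one has to check is: (a) the contact set $\{u = \G\}$ lies in $\W_2$ because the forcing vanishes outside it; (b) the dilated covering boxes $\tilde K_j$ in Lemma \ref{SABP} stay inside $\W_1$ provided their diameter is small compared with $\dist(\W_2, \p\W_1)$, which replaces the explicit radius $1/2 + 9\sqrt n\, 2^{-1/(2-\s)}\r_0 < 2$ appearing in Theorem \ref{ABP}; (c) the barriers and test functions used in Lemma \ref{IterLemma} and Lemma \ref{Deltah} only need $u$ to satisfy the equation in a neighborhood of the covering pieces, which is provided by $\W$; and (d) the cone inclusion $\Phi(\{u = \G\}) \supseteq Cone$ only uses $u \geq 0$ outside $\W_0$ and the normalization of $\sup u^-$. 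None of these require the domains to be concentric balls, and the constants depend only on $n,\L,\s_0$ and the mutual distances of the nested sets. Absent this verification, your write-up does not constitute a proof of Theorem \ref{ABP2}.
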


The idea of the proof is to cover $\{u = \G\}$ with a disjoint sequence of rectangles $K_j = Q_j \times I_j$, where $Q_j$ is a cube in space and $I_j$ is an interval in time such that:
\begin{enumerate}
\item The measure of the union of the covering is at least a fix constant.
\item In a dilation $\tilde K_j$ of $K_j$, $u$ is most of the time between $\G$ and $\G + M$.
\end{enumerate}


\subsection{Preliminaries}

Here we will fix the notation that we will carry on for the next results.

From $u$ we construct the following auxiliary functions which allow us to get important information for $u$. We enumerate them in the following list:
\begin{enumerate}
\item Let $(x_0,t_0) \in B_1\times(-1,0]$ such that $\sup_{B_1\times(-1,0]}u^- = |u(x_0,t_0)| = 1$.
\item Let $\bar u(x,t) = \inf_{s\in [-1,t]}u(x,s)$.
\item Let $\G(\cdot,t)$ be the convex envelope of $\bar u(\cdot,t)$ supported in $B_3$. Notice that $\G$ is convex in space and non increasing in time. This is what we also call a parabolic convex function.
\item Let $\p\G(x,t) \ss \R^n$ be the set of spatial subdifferential of $\G(\cdot,t)$ at $(x,t)$. Notice that for every $t\in[-1,0]$, $\p\G(B_3,t) = \p\G(B_1,t)$, therefore every subdifferential $p$ of $\G$ can be assume to be the slope of some supporting plane, namely $x\to p\cdot(x-x_0) + h$, to the graph of $\bar u(\cdot,t)$.
\item Let $h(\cdot,t):\p \G(B_1,t) \to \R$ be the Legendre transform of $\G(\cdot, t)$ centered at $x_0$,
\begin{align*}
h(p,t) = \sup\{h:p\cdot (x-x_0) + h \leq \G(x,t) \text{ for all } x \in B_3\},
\end{align*}
or equivalently,
\begin{align*}
h(p,t) = \sup\{h:&p\cdot (x-x_0) + h \leq \bar u(x,t) \text{ for all } x \in B_1,\\
                 &p\cdot (x-x_0) + h \leq 0           \text{ for all } x \in B_3\}.
\end{align*}
\item Let $\Phi = (\p\G(x,t),h(\p\G(x,t),t))$.
\end{enumerate}

This first lemma give some of the basic properties of the previously defined functions.

\begin{lemma}\label{Deltah}
Let $u$, $f$, $x_0$, $\G$, $h$ and $\Phi$ as defined above and consider also $\D t>0$. Then the following properties hold:
\begin{enumerate}
\item The domain of $h(\cdot,t)$ is non decreasing in time. i.e. $\p\G(B_1,t) \ss \p\G(B_1,t+\D t)$.
\item $h$ is non increasing in time.
\item The function $h$ restricted to $\{\G = u\}$ is Lipschitz in time. Specifically, for $(x_1,t_1) \in \{\G = u\}$ and $p_1 \in \p\G(x_1,t_1)$
\begin{align*}
\D h := h(p_1,t_1+\D t) - h(p_1,t_1) \geq -2\|f\|_{L^\8([t_1,t_1+\D t])}\D t.
\end{align*}
\end{enumerate}
\end{lemma}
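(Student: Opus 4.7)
My plan treats (1) and (2) first; they are essentially bookkeeping. Since $\bar u(\cdot,t)$ is an infimum over a growing set of times, it is pointwise non-increasing in $t$, and so is its convex envelope $\G(\cdot,t)$ on $B_3$. For (2), the set of admissible heights in the supremum defining $h(p,t)$ can only shrink in $t$, because the constraint $P\leq \bar u$ on $B_1$ tightens; hence $h(p,\cdot)$ is non-increasing. For (1), once (2) is in hand the supremum defining $h(p,t+\D t)$ for $p\in\p\G(B_1,t)$ remains finite---bounded above by $h(p,t)$ and bounded below by an elementary estimate in terms of $\|\bar u\|_\8$ and $|p|$---so it is attained on the compact $\bar B_3$, and item~4 of the preliminaries then transfers that contact point into $B_1$, yielding $p\in\p\G(B_1,t+\D t)$.

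The substantial point is (3). Write $P_1(x):=p_1\cdot(x-x_0)+h(p_1,t_1)$ for the supporting plane at time $t_1$, and observe the ``past'' inequality $u(x,s)\geq P_1(x)$ on $B_3\times[-1,t_1]$: on $B_1$ it follows from $P_1\leq\G\leq\bar u\leq u$ combined with the past-monotonicity of $\bar u$, and on $B_3\sm B_1$ from $u\geq 0\geq P_1$. The Lipschitz bound reduces to extending this forward in time as $u(x,s)\geq P_1(x)-2\|f\|_\8(s-t_1)$ on $B_1\times[t_1,t_1+\D t]$: once this is known, the shifted plane $P^*:=P_1-2\|f\|_\8\D t$ is admissible for $h(p_1,t_1+\D t)$ and one reads off $\D h\geq -2\|f\|_\8\D t$.

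For the extension I would compare $u$ with the affine barrier $V(x,s):=P_1(x)-2\|f\|_\8(s-t_1)$, which is a classical subsolution since $V_s=-2\|f\|_\8\leq -f$ and $\cM^-_{\cL_0}V=0$ by affinity. Because an affine function has unbounded tails, the actual comparison is carried out with the capped version $\tilde V:=(P_1\wedge 0)-2\|f\|_\8(s-t_1)$, which coincides with $V$ on $B_3$ and stays $\leq 0$ outside $B_3$. Arguing by contradiction, if $m:=\min_{B_1\times[t_1,t_1+\D t]}(u-\tilde V)<0$ is attained at $(x^*,s^*)$, the capping forces $u-\tilde V\geq m$ globally, so $\tilde V+m$ touches $u$ from below at $(x^*,s^*)$. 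Since $\tilde V$ is smooth near $x^*$ (there $P_1(x^*)<0$ strictly), Lemma~\ref{touch} yields $\cM^-_{\cL_0}u(x^*,s^*)\leq f(s^*)-2\|f\|_\8\leq -\|f\|_\8$, while the pointwise bound $\d u(x^*,s^*;y)\geq \d\tilde V(x^*,s^*;y)=-P_1(x^*+y)^+-P_1(x^*-y)^+$ coming from the global minimum property provides a matching lower bound on $\cM^-_{\cL_0}u(x^*,s^*)$.

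The main obstacle I expect is controlling the tail contribution to $\cM^-_{\cL_0}\tilde V(x^*,s^*)$: it is an integral of $P_1^+$ over a half-space in $B_3^c$ which, thanks to the a priori bounds $|p_1|\leq C$ and $\|\G\|_\8\leq 1$ forced by the geometry of $B_3$ and the normalization $\sup u^-=1$, remains finite uniformly in $\s\in(0,2)$. The coefficient $2$ in the Lipschitz constant is precisely what is needed to absorb both the source term $\|f\|_\8$ and this tail integral in the final contradiction step.
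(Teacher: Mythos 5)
Your treatment of items (1) and (2) is fine and essentially matches the paper, modulo a slightly different argument for (1) (attainment of the sup on a compact set rather than the paper's ``lower the plane until it supports'' description).

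Item (3) contains a genuine gap, and it is exactly at the place you flag as the ``main obstacle.'' The barrier $\tilde V=(P_1\wedge 0)-2\|f\|_\8(s-t_1)$ has $\d(\tilde V,x^*,s^*;y)=-P_1(x^*+y)^+-P_1(x^*-y)^+$, which grows like $|p_1||y|$ in a half-space. Two things go wrong. First, for $\s\leq 1$ the integral $\int_{\R^n\setminus B_3}|y|\,|y|^{-(n+\s)}dy$ diverges, so $\tilde V(\cdot,s)\notin L^1(\w)$ with $\w=(1+|y|^{n+\s})^{-1}$ and $\cM^-_{\cL_0}\tilde V(x^*,s^*)=-\8$. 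So $\tilde V$ is not an admissible test function, Lemma~\ref{touch} cannot be applied, and the ``matching lower bound'' on $\cM^-_{\cL_0}u$ is vacuous; the claim that the tail ``remains finite uniformly in $\s\in(0,2)$'' is false. Second, even when $\s>1$ so the tail is finite, the lower bound $\cM^-_{\cL_0}u(x^*,s^*)\geq\cM^-_{\cL_0}\tilde V(x^*,s^*)$ produces $\cM^-_{\cL_0}u\geq -C_0$ with $C_0$ a fixed positive constant depending only on $|p_1|$, the distance of $x^*$ to $\{P_1=0\}$ and $\s$, \emph{not} on $\|f\|_\8$; the upper bound from Lemma~\ref{touch} is $\cM^-_{\cL_0}u\leq f(s^*)-2\|f\|_\8\leq -\|f\|_\8$, and there is simply no contradiction when $\|f\|_\8<C_0$. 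Tuning the Lipschitz coefficient in the barrier only changes the time slope, not the spatial tail, so it cannot absorb $C_0$; your last sentence is therefore not a fixable technicality but a sign that the sign structure of the argument is off.

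The paper sidesteps both difficulties at once. Its test function $v(x,s)=\bigl(p_1\cdot(x-x_0)+h(p_1,t_1)+\tfrac{\D h}{2\D t}(s-t_1)\bigr)\chi_{B_1}(x)$ is compactly supported, so all tails vanish and $v\in L^1(\w)$ for every $\s\in(0,2)$. The key computation is that $\d(v,x_2,t_2;y)\geq 0$ for \emph{every} $y$, using that the supporting plane is $\geq -1$ somewhere in $B_1$ and $\leq 0$ on $B_3$, which forces $|p_1|\leq 1/2$; this gives $\cM^-_{\cL_0}v(x_2,t_2)\geq 0$. The argument is then direct rather than by contradiction: the slope of the time-linear piece is chosen to be the unknown quantity $\D h/(2\D t)$, and the chain $-\|f\|_{L^\8([t_1,t_1+\D t])}\leq v_{t^-}(x_2,t_2)-\cM^-_{\cL_0}v(x_2,t_2)\leq \D h/(2\D t)$ yields the Lipschitz bound. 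To repair your argument you should replace $\tilde V$ by a barrier supported in $B_1$ (or at least with $\leq 0$ tails of bounded support) and verify the sign of $\d$ using the slope bound $|p_1|\leq 1/2$, rather than trying to estimate a nonzero tail integral against $\|f\|_\8$.
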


\begin{proof}
The first two properties are consequences of the monotonicity of $\G$. If at time $t$, the plane $x \to p\cdot(x-x_0) + h$ is a supporting plane for the graph of $\G(\cdot,t)$ then at time $t + \D t$ it crosses or touches the graph of $\G(\cdot,t+\D t) \leq \G(\cdot,t)$, therefore by lowering $h$ we can find a supporting plane for $\G(\cdot,t+\D t)$ with the same slope $p$.

In order to see the next property notice that for every $p\in\G(\cdot,t)$ and $x\in B_3$,
\begin{align*}
p\cdot(x-x_0) + h(p,t+\D t) \leq \G(x,t + \D t) \leq \G(x,t).
\end{align*}
This makes $h(p,t+\D t)$ an admissible candidate in the definition of $h(p,t)$, and therefore $h(p,t) \geq h(p,t+\D t)$.

For the second part, notice first that $p_1 \in \p\G(B_1,t_1+\D t)$ because of the first property, therefore $\D h$ is well defined. We also have proved that $\D h \leq 0$ in the second property, so assume that $\D h < 0$  and consider the following test function,
\begin{align*}
v(x,s) = \1p_1\cdot(x-x_0) + h(p_1,t_1) + \frac{\D h}{2\D t}(s-t_1)\2\chi_{B_1}(x).
\end{align*}
The infimum of $u-v$ in $\R^n\times[-1,t_1+\D t]$ is strictly negative and attained at some point $(x_2,t_2) \in B_1 \times (t_1,t_1 + \D t]$. Indeed, the plane $x\to p_1\cdot(x-x_0) + h(p_1,t_1)$ crosses the graph of $u$ in $B_1\times(t_1,t_1 + \D t]$, otherwise $\D h$ would not be strictly negative.

We have that $v_t(x_2,t_2) = \D h/2\D t$ and $\d(v(\cdot,t_2),x_2;y) \geq 0$. This is immediate if $x_2\pm y$ lie both inside $B_1$ or both lie outside $B_1$. If $x_2+y \in B_1$ we use that $u^-(\cdot,t_2) \leq 1$ in order to see that $|v(x_2+y,t_2)|$ is at most $2|v(x_2,t_2)|$. Because $u^- \leq 1$, the plane $x\to p_1\cdot(x-x_0) + h(p_1,t_1)$ is above $-1$ at some point in $B_1$, lets recall also that the same plane is below zero in $B_3$. It tells us that its slope is at most $1/2$ and then $v(x_2,t_2) - v(x_2+y,t_2) = -p_1\cdot y \leq |p_1||y| \leq 1 \leq -v(x_2,t_2)$. Then, if $x_2-y$ is outside $B_1$, $\d(v(\cdot,t_1),x_1;y) = (v(x_1+y,t_1) - 2v(x_1,t_1)) + v(x_1-y,t_1) \geq 0$.

We conclude by the monotonicity of $d/dt - \cM^-_{\cL_0}$,
\begin{align*}
-\|f\|_{L^\8([t_1,t_1+\D t])} &\leq (u_t - \cM^-_{\cL_0}u)(x_2,t_2),\\
&\leq (v_t - \cM^-_{\cL_0}v)(x_2,t_2),\\
&\leq \frac{\D h}{2\D t}.
\end{align*}
\end{proof}


\subsection{Configurations of the covering pieces}

In the following lemmas we will study how the solution $u$ detach from $\G$ around the contact set. In order to keep the statements as simple as possible we will describe here some recurrent geometric configurations and fix the notation for them. For $u$ satisfying the hypothesis of Theorem \ref{ABP}, $k \in \N$, $\D t,\r_0\in(0,1)$, $(x_1,t_1) \in B_1\times(-1,0]$ we consider:
\begin{enumerate}
\item $R_i = B_{r_i}(x_1) \sm B_{r_{i+1}}(x_1)$ for $r_i = 2^{-i}2^{-1/(2-\s)}\r_0$,
\item $S_i = R_i \times [t_1-\D t,t_1-\D t/2]$.
\end{enumerate}
Eventually $(x_1,t_1)$ will be fixed to be in the contact set, $k$ will also be fixed of the order of $1/(2-\s)$.


\begin{lemma}\label{IterLemma}
Let $u$ satisfy the hypothesis of Theorem \ref{ABP} and for $k \in \N$, $\D t\in(0,1)$, $(x_1,t_1) \in B_1\times(-1,0]$ let $r_i$ and $S_i$ as defined above. Given $(p_1,h_1) \in \R^{n+1}$, $M>0$ and $\m \in (0,1)$ such that
\begin{enumerate}
\item $\G(x,t) \geq p_1\cdot(x-x_0)+h_1$ for $(x,t)\in B_3\times(t_1-\D t,t_1]$,
\item For every $i=0,1,\ldots,k-1$,
\begin{align}\label{hyp}
\frac{|\{u - (p_1\cdot(x-x_0)+h_1)\geq \|f^+\|_{L^\8([t_1-\D t,t_1])}Mr^2_i\} \cap S_i|}{|S_i|} \geq \m,
\end{align}
\end{enumerate}
then for $(x,t)\in B_{r_{k+1}}(x_1) \times [t_1-\D t/2,t_1] $ we have
\begin{align*}
 u - (p_1\cdot(x-x_0)+h_1) \geq \|f^+\|_{L^\8([t_1-\D t,t_1])}\D t
\end{align*}
if $\D t \in (0,r_k^2)$ and $M\mu(r_0^{2-\s}-r_k^{2-\s}) \geq K$, for some constant $K$ independent of $\s$.
\end{lemma}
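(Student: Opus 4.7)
The plan is to iterate Lemma~5.1 of \cite{S2}, once per annulus $R_i$, in order to propagate the measure-theoretic lower bound on $w(x,t) := u(x,t) - p_1\cdot(x-x_0) - h_1$ inward from $R_0$ all the way to $B_{r_{k+1}}(x_1)$. First I would normalize: setting $A := \|f^+\|_{L^\8([t_1-\D t, t_1])}$ and using that $\cM^-_{\cL_0}$ annihilates affine functions, $w$ inherits the supersolution inequality $w_t - \cM^-_{\cL_0} w \geq -A\chi_{B_{1/2}}$. Hypothesis~(1) together with $\G \leq \bar u \leq u$ gives $w \geq 0$ in $B_3 \times (t_1-\D t, t_1]$, while hypothesis~(2) reads $|\{w \geq AMr_i^2\}\cap S_i| \geq \m|S_i|$ for $i=0,1,\ldots,k-1$.

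Next I would build a classical subsolution $\varphi$ as a sum of contributions, one per annulus $R_i$, and compare it with $w$ on $\bar B_{r_k}(x_1)\times[t_1-\D t, t_1]$. For $x \in B_{r_{k+1}}(x_1)$ and $y$ pointing into $R_i$ one has $|y|\sim r_i$ and $|R_i|\sim r_i^n$, so the measure-theoretic bound on $S_i$ contributes to the nonlocal operator an amount of order
\begin{align*}
(2-\s)\L^{-1}\,\frac{AMr_i^2\cdot\m|R_i|}{r_i^{n+\s}} \;\sim\; (2-\s)\L^{-1} AM\m\, r_i^{2-\s}.
\end{align*}
The assumption $\D t < r_k^2 \leq r_i^2$ keeps the parabolic error comparable to $A\D t$ and ensures each single-annulus step lies inside its valid time window.

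Summing the contributions over $i=0,\ldots,k-1$ then produces the geometric series
\begin{align*}
\sum_{i=0}^{k-1}(2-\s)\,r_i^{2-\s} \;=\; \frac{(2-\s)}{1-2^{-(2-\s)}}\bigl(r_0^{2-\s}-r_k^{2-\s}\bigr),
\end{align*}
and the cancellation $(2-\s)/(1-2^{-(2-\s)}) \to 1/\ln 2$ as $\s\to 2^-$ keeps this uniformly bounded below by a universal multiple of $r_0^{2-\s}-r_k^{2-\s}$. Choosing $K$ so that the aggregate constant absorbs both the parabolic error and the target $A\D t$, the hypothesis $M\m(r_0^{2-\s}-r_k^{2-\s}) \geq K$ forces $w \geq A\D t$ on $\bar B_{r_{k+1}}(x_1)\times[t_1-\D t/2, t_1]$ via the comparison principle.

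The main obstacle is preserving $\s$-uniformity across the geometric summation: naively $1-2^{-(2-\s)}$ vanishes as $\s\to 2^-$, and it is exactly the $(2-\s)$-normalization built into the definition of $\cL_0$ that compensates it. A second technical point is that hypothesis~(2) is measure-theoretic rather than pointwise, so the subsolution barrier cannot be constructed from $w$ directly outside $B_{r_k}(x_1)$; the cure, following Silvestre, is to integrate in time across $[t_1-\D t, t_1-\D t/2]$ so that the averaged bulk contribution is rigorous, and then use the waiting interval $[t_1-\D t/2, t_1]$ for the lower bound to propagate pointwise---this is precisely why the conclusion holds on $[t_1-\D t/2, t_1]$ rather than on all of $(t_1-\D t, t_1]$.
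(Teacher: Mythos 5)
Your proposal matches the paper's strategy in its essential ingredients: normalize to $w = u - P$ with $P$ the affine function, let the nonlocal operator accumulate the contributions from the good portions of the annuli $R_i$, and use the $(2-\s)$ normalization built into $\cL_0$ to tame the geometric series $\sum_{i=0}^{k-1} r_i^{2-\s} = (r_0^{2-\s}-r_k^{2-\s})/(1-2^{\s-2})$ so that the accumulated quantity is a universal multiple of $M\m(r_0^{2-\s}-r_k^{2-\s})$.

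The one place where the sketch is imprecise --- and it matters --- is the statement that you ``build a classical subsolution $\varphi$ as a sum of contributions and compare it with $w$ via the comparison principle.'' Because hypothesis (2) only gives a measure-theoretic lower bound on $w$ in each $S_i$, there is no pointwise lower bound on $w$ on the parabolic boundary of any fixed region against which a barrier could be compared; a direct comparison cannot be set up. The paper's concrete device is a single test function
\begin{align*}
v(x,t) = P(x) + m(t)\b(x) - \|f^+\|_{L^\8([t_1-\D t,t_1])}\bigl(t - (t_1 - \D t)\bigr),
\end{align*}
where $\b$ is a bump of scale $r_k$ centered at $x_1$ and $m$ solves the ODE $m' = c_1(2-\s)\|f^+\|_{L^\8([t_1-\D t,t_1])}M\sum_{i=0}^{k-1}\frac{|G_i(t)|}{r_i^n}r_i^{2-\s} - C_2 r_k^{-\s} m$ with $m(t_1-\D t)=0$. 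One argues by contradiction: if $u - P < \|f^+\|_{L^\8([t_1-\D t,t_1])}\D t$ somewhere in the target box, then $u - v$ attains a strictly negative interior minimum at some $(x_2,t_2)$, and at that touching point it is the nonlocal operator applied to $u$ itself (evaluated classically via Lemma \ref{touch}) that picks up the $G_i(t_2)$ contributions; the ODE for $m$ is calibrated so this yields a contradiction. The explicit integral formula for $m$ together with hypothesis (2) then gives $m \geq 2\|f^+\|_{L^\8([t_1-\D t,t_1])}\D t$ on $[t_1-\D t/2,t_1]$, and this is exactly where $\D t < r_k^2 \leq r_k^\s$ is used, via $e^{-C_2 r_k^{-\s}\D t}\geq e^{-C_2}$. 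Your remark about integrating in time so that the averaged bulk contribution becomes rigorous is a correct intuition about what the ODE for $m$ accomplishes, but as written the comparison-principle framing does not close the argument; the touching argument with the ODE-defined $m$ is what actually carries it.
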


\begin{corollary}\label{Ring}
Let $u$ satisfy the hypothesis of Theorem \ref{ABP} and for $k \in \N$, $\D t\in(0,1)$, $(x_1,t_1) \in B_1\times(-1,0]$ let $r_i$ and $S_i$ as defined above. For every $\s<2$ there is some $k \sim 1/(2-\s)$ such that if,
\begin{enumerate}
\item $\D t \in (0,r_k^2)$,
\item $(x_1,t_1) \in \{\G = u\}$,
\item $p_1 \in \p\G(x_1,t_1)$,
\end{enumerate}
then there is some sufficiently small radius $r\in(0,r_0)$, so that for $S = (B_r(x_1)\sm B_{r/2}(x_1))\times(t_1-\D t,t_1-\D t/2]$ the following holds for every $M>0$,
\begin{align*}
 \frac{|\{u - (p_1\cdot(x-x_0)+h(p_1,t_1))\geq \|f^+\|_{L^\8([t_1-\D t,t_1])}Mr^2\} \cap S|}{|S|} \leq \frac{2K}{r_0^2}M^{-1}.
\end{align*}
\end{corollary}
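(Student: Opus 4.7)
The plan is to run Lemma \ref{IterLemma} in the contrapositive at the contact point $(x_1,t_1)$. Set $\ell(x) := p_1\cdot(x-x_0) + h(p_1,t_1)$. Since $p_1\in\p\G(x_1,t_1)$, the Legendre definition of $h$ makes $\ell$ a supporting plane for $\G(\cdot,t_1)$ with slope $p_1$, so $\G(x,t_1)\geq \ell(x)$ on $B_3$ with equality at $x_1$. Using that $\G$ is non-increasing in time (Lemma \ref{Deltah}), this extends to $\G(x,t)\geq \ell(x)$ on $B_3\times(t_1-\D t,t_1]$, which is hypothesis (1) of Lemma \ref{IterLemma} with $h_1 = h(p_1,t_1)$. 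The contact assumption $(x_1,t_1)\in\{\G=u\}$ moreover forces $u(x_1,t_1)=\G(x_1,t_1)=\ell(x_1)$.

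Fix $M>0$ and take $\m := 2K/(Mr_0^2)$ with $K$ the constant from Lemma \ref{IterLemma}, and choose $k := \lceil 1/(2-\s)\rceil$. The normalization $r_0 = 2^{-1/(2-\s)}\r_0 \in (0,1)$ together with $r_j=r_0 2^{-j}$ gives
\[
r_0^{2-\s} - r_k^{2-\s} \;=\; r_0^{2-\s}(1 - 2^{-k(2-\s)}) \;\geq\; \tfrac{1}{2}\,r_0^{2-\s} \;\geq\; \tfrac{1}{2}\,r_0^2,
\]
so $M\m(r_0^{2-\s}-r_k^{2-\s})\geq K$, which is exactly the scaling condition in Lemma \ref{IterLemma}. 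The hypothesis $\D t\in(0,r_k^2)$ from the corollary gives the remaining restriction.

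Suppose now, for contradiction, that for every $i=0,1,\ldots,k-1$ the claimed bound fails on $S_i$ at this level $M$, i.e.
\[
\frac{|\{u - \ell \geq \|f^+\|_{L^\8([t_1-\D t,t_1])} M r_i^2\}\cap S_i|}{|S_i|} > \m.
\]
This is precisely hypothesis (2) of Lemma \ref{IterLemma} with $(p_1,h_1)=(p_1,h(p_1,t_1))$, so the lemma concludes
\[
u(x,t)-\ell(x) \;\geq\; \|f^+\|_{L^\8([t_1-\D t,t_1])}\,\D t \quad\text{on } B_{r_{k+1}}(x_1)\times[t_1-\D t/2,t_1].
\]
Evaluating at $(x_1,t_1)$ gives $0 = u(x_1,t_1)-\ell(x_1) \geq \|f^+\|\D t > 0$, a contradiction (the only meaningful case being $\|f^+\|>0$, otherwise the set in the conclusion is controlled directly). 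Therefore some $r=r_i\in(0,r_0]$ yields the desired bound on $S = R_i\times(t_1-\D t,t_1-\D t/2]$.

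The main obstacle is arranging the iteration depth $k$ and the threshold $\m$ so that the scaling constant $M\m(r_0^{2-\s}-r_k^{2-\s})$ stays uniformly above $K$ as $\s\to 2$. This is exactly what the normalization $r_0 = 2^{-1/(2-\s)}\r_0$ is designed for: it makes $r_0^{2-\s}=\r_0^{2-\s}/2$ a quantity of order one uniformly in $\s$, while $k\sim 1/(2-\s)$ pushes $r_k^{2-\s}$ below half of $r_0^{2-\s}$. Once this calibration is in place, the argument reduces to a single invocation of Lemma \ref{IterLemma} and the contact identity $u(x_1,t_1)=\ell(x_1)$.
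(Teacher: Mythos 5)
Your argument is correct and takes the natural route: run Lemma \ref{IterLemma} in the contrapositive with $h_1 = h(p_1,t_1)$, use the monotonicity of $\G$ in time plus the Legendre duality to supply hypothesis (1), and obtain the contradiction at $(x_1,t_1)$ from the contact-set identity $u(x_1,t_1)=\G(x_1,t_1)=\ell(x_1)$; the calibration $\m = 2K/(Mr_0^2)$, $k=\lceil 1/(2-\s)\rceil$ together with $r_0^{2-\s}=\r_0^{2-\s}/2$ is exactly what makes the scaling condition $M\m(r_0^{2-\s}-r_k^{2-\s})\geq K$ hold uniformly in $\s$. Two small remarks: (i) your parenthetical about $\|f^+\|=0$ is actually backwards --- in that case $u\geq\G\geq\ell$ forces $\{u-\ell\geq 0\}$ to be all of $S$, so the bound would \emph{fail}; this is moot only because Theorem \ref{ABP} assumes $f>0$ on $[-1,0]$, which is worth saying explicitly rather than claiming the set is ``controlled directly''. (ii) Be aware that your proof establishes ``for every $M>0$ there exists $r$,'' whereas the corollary's wording reads ``there exists $r$ such that for every $M$''; the $\forall M\,\exists r$ form is what is actually used in the proof of Lemma \ref{SABP} (where $M$ is fixed universally first and $r$ is then allowed to depend on the contact point), so your reading is the correct one.
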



\begin{proof}[Proof of Lemma \ref{IterLemma}]
Let $\b = \b_0(x-x_1/r_k) \in [0,1]$ be a smooth bump function such that $supp\ \b_0 = B_{3/4}$ and $\b_0 = 1$ in $B_{1/2}$. Moreover we can choose $\b_0$ such that $\cM^-_{\cL_0}\b_0 \geq 0$ if $\b_0(x) \leq \b_1$ for some positive constant $\b_1$. We want to use a test function in $B_1 \times [t_1-\D t,t_1]$ of the form
\begin{align*}
 v(x,t) &= P(x) + m(t)\b(x) - \|f^+\|_{L^\8([t_1-\D t,t_1])}(t - (t_1 - \D t)),\\
 P(x)   &= (p_1\cdot(x-x_0)+h_1)\chi_{B_1}(x),
\end{align*}
such that $m(t_1-\D t) = 0$ and $m \geq 2\|f^+\|_{L^\8([t_1-\D t,t_1])}\D t$ for $t \in [t_1-\D t/2,t_1]$.

Assume by contradiction that,
\begin{align*}
 \inf_{B_{r_{k+1}(x_1)} \times [t_1-\D t/2,t_1]}(u - P) < \|f^+\|_{L^\8([t_1-\D t,t_1])}\D t.
\end{align*}
Then for some $(x_2,t_2) \in B_{3r_k/4}(x_1) \times [t_1-\D t,t_1]$,
\begin{align*}
 \inf_{\R^n\times [t_1-\D t/2,t_1]} (u-v) = (u-v)(x_2,t_2) < 0.
\end{align*}
Notice that $B_{3r_k/4}(x_1) \times [t_1-\D t,t_1]$ is contained in the domain of the equation $B_2\times(-2,0]$ if $\D t < 1$ and $r_0 < 1$. We use Lemma \ref{touch} in order to do the following computations on $u$ at the contact point $(x_2,t_2)$. We also use that $\d(u-v,x_2,t_2) = \d^+(u-v,x_2,t_2)$ and $\d(P,x_2,t_2;y) \geq 0$ for every $y\in\R^n$, as in the proof of Lemma \ref{Deltah}, 
\begin{align*}
&-m'(t_2)\b(x_2)  + m(t_2)r^{-\s}_k\cM^-_{\cL_0}\b_0(x_2-x_0),\\
&\leq (u_t - \cM^-_{\cL_0}u)(x_2,t_2) - (v_t - \cM^-_{\cL_0}v)(x_2,t_2),\\
&\leq (u-v)_t(x_2,t_2) - \cM^-_{\cL_0}(u-v)(x_2,t_2),\\
&\leq -\L^{-1}(2-\s)\int_{\bigcup_{i=0}^{k-1} R_i \times \{t_2\}}\frac{\d^+(u-v,x_2,t_2;y)}{|y|^{n+\s}}dy,\\
&= -\L^{-1}(2-\s)\int_{\bigcup_{i=0}^{k-1} R_i \times \{t_2\}}\frac{\d^+(u-m\b,x_2,t_2;y)}{|y|^{n+\s}}dy.
\end{align*}
If $x_2 + y \in R_i$, then $|y| \sim r_i$, because $x_2 \in B_{3r_k/4}(x_1)$. Also $\d^+(u-m\b) \geq \|f^+\|_{L^\8([t_1-\D t,t_1])}Mr_i^2$ every time $x_2+y \in G_i(t_2)$, where
\begin{align*}
G_i(t) = \{u - (p\cdot(x-x_0)+h_1) \geq \|f^+\|_{L^\8([t_1-\D t,t_1])}Mr_i^2\} \cap (R_i\times\{t\}).
\end{align*}
Therefore we obtain, for some constant $C_0$ depending only on $\L^{-1}$ and the dimension,
\begin{align}\label{eq1}
 -m'\b + mr_k^{-\s}\cM^-_{\cL_0}\b_0 \leq -C_0(2-\s)\|f^+\|_{L^\8([t_1-\D t,t_1])}M\sum_{i=0}^{k-1}\frac{|G_i(t_0)|}{r_i^n}r_i^{2-\s}.
\end{align}
As in \cite{S1}, if $m$ satisfies:
\begin{align*}
m'(t) &= c_1(2-\s)\|f^+\|_{L^\8([t_1-\D t,t_1])}M\sum_{i=0}^{k-1}\frac{|G_i(t)|}{r_i^n}r_i^{2-\s} - C_2r_k^{-\s}m(t),\\
m(t_1-\D t) &= 0,
\end{align*}
with constants $c_1$ sufficiently small and $C_2$ sufficiently large then we get a contradiction. Indeed, if $\b_0(x_2-x_1/r_k) \leq \b_1$ then $\cM^-_{\cL_0}\b_0 \geq 0$ and we see that $0 < c_1 \leq C_0$ implies a contradiction by substituting $m'$ in \eqref{eq1}. Otherwise, if $\b_0(x_2-x_1/r_k) > \b_1$, then we also get a contradiction in \eqref{eq1} if $C_2 \geq \|\cM^-_{\cL_0}\b_0\|_\8/\b_1$.

We finally need to check that $m \geq 2\|f^+\|_{L^\8([t_1-\D t,t_1])}\D t$ in $[t_1-\D t/2,t_1]$ from the hypothesis of the lemma. We have an explicit formula for $m$,
\begin{align*}
 m(t) = c_1(2-\s)\|f^+\|_{L^\8([t_1-\D t,t_1])}M\sum_{i=0}^{k-1}r_i^{2-\s}\int_{t_1-\D t}^{t}\frac{|G_i(s)|}{r_i^n}e^{-C_2r_k^{-\s}(t-s)}ds.
\end{align*}
Using the hypothesis \eqref{hyp} of the lemma, for $t\geq t_1-\D t/2$, we get
\begin{align*}
 m \geq C(2-\s)\|f^+\|_{L^\8([t_1-\D t,t_1])}M\m \D t\frac{r_0^{2-\s}-r_k^{2-\s}}{1-2^{\s-2}}e^{-C_2r_k^{-\s}\D t}.
\end{align*}
The quotient $(2-\s)/(1-2^{\s-2})$ is bounded away from zero by a universal constant when $\s \in [0,2]$. Also $e^{-C_2r_k^{-\s}\D t} \geq e^{-C_2}$ if $\D t \leq r_k^\s$. Finally $m \geq 2\|f^+\|_{L^\8([t_1-\D t,t_1])}\D t$ is satisfied if $M\mu(r_0^{2-\s}-r_k^{2-\s}) \geq K$ for some $K$ independent of $\s$.
\end{proof}


The following is a geometric lemma that can be applied to any parabolic convex function. As a reminder, we say that $\G(x,t)$ is a parabolic convex function if it is convex in the variable $x \in \R^n$ and non increasing in the variable $t \in \R$.

\begin{lemma}\label{tapaarriba}
Let $\G:B_3\times[-\D t,0]\to \R$ parabolic convex function such that
\begin{align*}
\frac{|\{\G \geq M\} \cap (B_r\sm B_{r/2})\times[-\D t,-\D t/2]|}{|(B_r\sm B_{r/2})\times[-\D t,-\D t/2]|} \leq \e_0.
\end{align*}
Then $\G \leq M$ in $B_{r/2}\times[-\D t/2,0]$ if $\e_0$ is sufficiently small, depending only on $n$.
\end{lemma}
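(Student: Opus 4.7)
The plan is to reduce the claim to a purely spatial, convex-geometric fact at one well-chosen time slice, and then propagate the conclusion forward using the monotonicity of $\G$ in $t$. To pick the good time slice, I would apply Fubini to the hypothesis, obtaining
\[
\int_{-\D t}^{-\D t/2}|\{x\in B_r\sm B_{r/2}:\G(x,t)\ge M\}|\,dt\le \e_0\,|B_r\sm B_{r/2}|\,\frac{\D t}{2},
\]
and hence by averaging some $t_0\in[-\D t,-\D t/2]$ with
\[
|\{x\in B_r\sm B_{r/2}:\G(x,t_0)\ge M\}|\le \e_0\,|B_r\sm B_{r/2}|.
\]

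The heart of the argument is the following geometric claim, which I would prove separately: there exists a dimensional constant $\e_0=\e_0(n)>0$ such that every closed convex set $C\ss\R^n$ satisfying $|C^c\cap(B_r\sm B_{r/2})|\le \e_0|B_r\sm B_{r/2}|$ contains $B_{r/2}$. Applied to the convex sub-level set $C=\{\G(\cdot,t_0)\le M\}$, this gives $\G(\cdot,t_0)\le M$ on $B_{r/2}$. To prove the claim I would argue by contradiction: if some $x^*\in B_{r/2}\sm C$, strict separation of $\{x^*\}$ from the closed convex $C$ supplies an open half-space $H^+\ni x^*$ disjoint from $C$, so $H^+\cap(B_r\sm B_{r/2})\ss C^c\cap(B_r\sm B_{r/2})$. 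It then suffices to prove a uniform dimensional lower bound
\[
|H^+\cap(B_r\sm B_{r/2})|\ge \a_n|B_r\sm B_{r/2}|,\qquad \a_n=\a_n(n)>0,
\]
for every open half-space meeting $B_{r/2}$. Normalising $r=1$ and writing $H^+=\{\nu\cdot x>a\}$ with $|\nu|=1$, the condition $x^*\in H^+\cap B_{1/2}$ forces $a<1/2$; when $a\le 0$ the half-space already contains a full half-ball and the bound is immediate, while for $0<a<1/2$ the set $H^+\cap B_1$ contains the spherical cap at height $1/2$, of positive dimensional volume $c_n$, whereas $H^+\cap B_{1/2}$ is itself a cap of $B_{1/2}$ of volume at most $|B_{1/2}|/2$. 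An elementary direct computation gives $c_n>|B_{1/2}|/2$ in every dimension, producing $\a_n>0$, and choosing $\e_0<\a_n$ closes the contradiction.

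The third and final step is to use the monotonicity of $\G$ in $t$: since $t_0\le -\D t/2$, for every $t\in[-\D t/2,0]$ we have $t\ge t_0$ and therefore $\G(\cdot,t)\le \G(\cdot,t_0)\le M$ on $B_{r/2}$, which is the desired conclusion. The main obstacle I expect is the dimensional estimate $c_n>|B_{1/2}|/2$ underlying the geometric claim --- verifying by an explicit comparison of the two cap-volume integrals that the cap of $B_1$ at height $1/2$ remains a strictly larger fraction of $|B_1|$ than $2^{-n-1}$ uniformly in $n$, so that the difference is a fixed dimensional multiple of $|B_1\sm B_{1/2}|$.
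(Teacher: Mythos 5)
Your proposal is correct and reaches the conclusion, but it organizes the argument differently from the paper. The paper locates the maximum $N$ of $\G$ over $B_{r/2}\times[-\D t/2,0]$ --- by parabolic convexity this sits at a point $((r/2)e_1,-\D t/2)$ on the parabolic boundary --- and uses the subgradient at that point together with monotonicity in $t$ to show $\G\ge N$ on $A=\{(x,t)\in(B_r\sm B_{r/2})\times[-\D t,-\D t/2]:x\cdot e_1>r/2\}$, whose measure is a fixed positive fraction of the annular cylinder; taking $\e_0$ below that fraction forces $N\le M$. You instead apply Fubini to isolate one good time slice $t_0$, prove a self-contained spatial lemma (a convex set nearly filling the annulus $B_r\sm B_{r/2}$ must contain $B_{r/2}$) by strict separation, and propagate forward in $t$ by monotonicity. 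The two routes are closely related --- the paper's set $A$ is itself a half-space intersected with the annular cylinder, precisely the object your separation step produces --- but the decoupling into a reusable convex-geometric lemma plus Fubini is a genuinely different organization, and arguably cleaner since it avoids explicitly locating the maximum and the subgradient normalization.

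The one place where you should be more careful is exactly the quantitative estimate you flag as the "main obstacle." The comparison $c_n>|B_{1/2}|/2$ is in fact an equality when $n=1$: the cap $\{x>1/2\}\cap(-1,1)$ has length $1/2$, which equals $|B_{1/2}|/2$. So the lower bound $|H^+\cap(B_1\sm B_{1/2})|\ge c_n-|B_{1/2}|/2$ is vacuous in dimension one. The naive subtraction is too lossy because the two quantities you bound separately both depend on $a$ and nearly cancel as $a\to(1/2)^-$. The right observation is that $g(a)=|H^+\cap(B_1\sm B_{1/2})|$ is nonincreasing for $a\in[0,1/2)$: differentiating the cap volumes in $a$ shows $g'(a)$ equals a negative dimensional multiple of $(1-a^2)^{(n-1)/2}-(1/4-a^2)^{(n-1)/2}<0$. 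Hence the infimum of $g$ over all admissible half-spaces is $\lim_{a\to(1/2)^-}g(a)=c_n>0$, so you should take $\e_0<c_n/|B_1\sm B_{1/2}|$, which is strictly positive in every dimension; with that fix the argument closes.
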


\begin{proof}
By the convexity of $\G$ we can assume that its maximum $N$ over $B_{r/2}\times[-\D t/2,0]$, is attained at $(r/2e_1, -\D t/2)$. Therefore $\G \geq N$ in $A = \{(x,t) \in (B_r\sm B_{r/2})\times[-\D t,-\D t/2]: x\cdot e_1 > r/2\}$. Therefore, if $\e_0$ is smaller than $|A|/|(B_r\sm B_{r/2})\times[-\D t,-\D t/2]|$, we obtain that $N$ is necessarily smaller or equal than $M$ in $B_{r/2}\times[-\D t/2,0]$.
\end{proof}


By applying Lemma \ref{Deltah} and the previous lemma to $\G(x,t) - (p_1\cdot(x-x_0) + h(p_1,t_1))$ with all the hypothesis and conclusions of Corollary \ref{Ring} we obtain the following result.

\begin{corollary}[Flatness of $\G$]\label{Flat}
Let $u$ satisfy the hypothesis of Theorem \ref{ABP} and $(x_1,t_1)$, $p_1$ and $r$ as in Corollary \ref{Ring}. There exist a universal constant $M>0$ such that for every $(x,t) \in F = B_{r/2}(x_1)\times[t_1-\D t/2,\max\{t_1+\D t/2, 0\}]$
\begin{align*}
-2\|f^+\|_{L^\8([t_1,\max\{t_1+\D t, 0\}])}r^2 &\leq \G(x,t) - (p_1\cdot(x-x_0) + h(p_1,t_1)),\\
&\leq M\|f^+\|_{L^\8([t_1-\D t,t_1])}r^2.
\end{align*}
\end{corollary}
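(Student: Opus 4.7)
The plan is to prove the two inequalities separately, using Lemma \ref{tapaarriba} for the upper bound and Lemma \ref{Deltah}(3) for the lower bound, and gluing each across $t = t_1$ via the monotonicity of $\G$ in time.

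For the upper bound I would introduce the auxiliary function
\begin{align*}
\tilde\G(x,t) := \G(x,t) - \big(p_1\cdot(x-x_0) + h(p_1,t_1)\big),
\end{align*}
which is parabolic convex because the subtracted affine function is independent of $t$. Since $\G \leq \bar u \leq u$ pointwise, the measure estimate of Corollary \ref{Ring} applied to $u$ transfers verbatim to $\tilde\G$: the proportion of the ring $S = (B_r(x_1)\sm B_{r/2}(x_1))\times(t_1-\D t, t_1-\D t/2]$ on which $\tilde\G \geq \|f^+\|_{L^\8([t_1-\D t,t_1])}Mr^2$ is at most $2K/(r_0^2 M)$. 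Choosing a universal $M$ large enough that this proportion falls below the threshold $\e_0$ of Lemma \ref{tapaarriba}, a trivial time translation turns Lemma \ref{tapaarriba} into the bound $\tilde\G \leq \|f^+\|_{L^\8([t_1-\D t,t_1])}Mr^2$ on $B_{r/2}(x_1)\times[t_1-\D t/2, t_1]$. Since $\tilde\G$ is non-increasing in $t$, this estimate propagates automatically to the full interval $[t_1-\D t/2, \max\{t_1+\D t/2, 0\}]$.

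For the lower bound I would split on the sign of $t-t_1$. When $t \leq t_1$, the monotonicity of $\G$ in time combined with the definition of $h(p_1,t_1)$ as a supporting slope-intercept gives $\G(x,t) \geq \G(x,t_1) \geq p_1\cdot(x-x_0) + h(p_1,t_1)$ on $B_3$, so $\tilde\G \geq 0$ trivially. When $t > t_1$, the hypotheses $(x_1,t_1) \in \{\G = u\}$ and $p_1 \in \p\G(x_1,t_1)$ (both inherited from Corollary \ref{Ring}) make Lemma \ref{Deltah}(3) applicable with $\D t$ replaced by $t-t_1$, yielding $h(p_1,t) \geq h(p_1,t_1) - 2\|f^+\|_{L^\8([t_1,t])}(t-t_1)$. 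Together with $\G(x,t) \geq p_1\cdot(x-x_0) + h(p_1,t)$ and the constraint $\D t \leq r_k^2 \leq r^2$ built into Corollary \ref{Ring} (the radius $r$ produced by the iteration in Lemma \ref{IterLemma} satisfies $r \geq r_k$), this delivers the stated lower bound.

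The main technical point is the bookkeeping: fixing $M$ universally once $\e_0$ from Lemma \ref{tapaarriba} is known, and carefully tracking which time interval appears inside each $L^\8$ norm of $f^+$ (past for the upper bound via Lemma \ref{tapaarriba}, future for the lower bound via Lemma \ref{Deltah}). Once $M$ is locked in, the proof is simply a clean application of the two lemmas plus the parabolic convexity and monotonicity of $\tilde \G$.
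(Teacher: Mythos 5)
Your proposal is correct and takes essentially the same route the paper indicates (the paper's entire ``proof'' is the sentence preceding the corollary, saying to apply Lemma \ref{Deltah} and Lemma \ref{tapaarriba} to $\G(x,t)-(p_1\cdot(x-x_0)+h(p_1,t_1))$, which is exactly your $\tilde\G$). The details you supply---transferring the measure bound from $u$ to $\G$ via $\G\leq u$, choosing $M$ to beat $\e_0$, propagating the upper bound forward in time by monotonicity, and getting the lower bound from Lemma \ref{Deltah}(3) together with $\D t<r_k^2\leq r^2$---are precisely the intended bookkeeping.
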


The previous corollary seems still insufficient to control
\begin{align*}
 \frac{|\Phi(B_{r/4}(x_1) \times [t_1-\D t/2,t_1+\D t/2])|}{|B_{r/4}(x_1) \times [t_1-\D t/2,t_1+\D t/2]|}.
\end{align*}
The flatness property takes care of the $n$-dimensional size of $\p\G(B_{r/4}(x_1) \times \{t\})$ by using the geometry of the convex function $\G(\cdot,t)$ ($t\in[t_1-\D t/2,t_1+\D t/2]$). Note also that the image of $\Phi(\cdot,t) = (\p\G(\cdot,t), h(\p\G(\cdot,t),t))$ is the graph of $h(\cdot,t)$, for which we use again the properties in Lemma \ref{Deltah}.

\begin{corollary}\label{measure}
Let $u$ satisfy the hypothesis of Theorem \ref{ABP} and $(x_1,t_1)$, $p_1$ and $r$ as in Corollary \ref{Ring}. There exist a universal constant $C>0$ such that for $K = B_{r/4}(x_1)\times[t_1-\D t/2,\max\{t_1+\D t/2,0\}]$ we have 
\begin{align*}
 \frac{|\Phi(K)|}{|K|} \leq C\|f^+\|_{L^\8([t_1-\D t,\max\{t_1+\D t,0\}])}^{n+1}.
\end{align*}
\end{corollary}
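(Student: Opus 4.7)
The plan is to parametrize $\Phi(K)$ by pairs $(p,t)$ and apply the area formula, bounding the image's $(n+1)$-dimensional volume as a product of a spatial spread in $p$ (coming from the flatness of $\G$) and a temporal Lipschitz estimate for $h$. Throughout, write $L(x) = p_1\cdot(x-x_0)+h(p_1,t_1)$ and $I = [t_1-\D t, \max\{t_1+\D t,0\}]$, so that $\|f^+\|_{L^\8(I)}$ is the norm appearing in the statement.

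First I would estimate, for each fixed $t$ in the time range of $K$, the $n$-dimensional size of the subdifferential set $\p\G(B_{r/4}(x_1),t)$. Corollary \ref{Flat} gives that $\G(\cdot,t) - L$ is a convex function on $B_{r/2}(x_1)$ with absolute value bounded by $C\|f^+\|_{L^\8(I)}\, r^2$. The elementary fact that a convex function of oscillation at most $M$ on $B_R$ has subdifferential on $B_{R/2}$ contained in $B_{4M/R}$ then yields $\p\G(B_{r/4}(x_1),t) \subseteq B_{C\|f^+\|_{L^\8(I)}\, r}(p_1)$, hence $|\p\G(B_{r/4}(x_1),t)| \leq C\|f^+\|_{L^\8(I)}^n r^n$ for every admissible $t$.

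Next, parts (2) and (3) of Lemma \ref{Deltah} together give the bilateral bound $-2\|f^+\|_{L^\8(I)} \leq \p_t h(p,t) \leq 0$. Setting
\[
E = \{(p,t):\ t \in [t_1-\D t/2,\max\{t_1+\D t/2,0\}],\ p \in \p\G(B_{r/4}(x_1),t)\}
\]
and $\Psi(p,t)=(p,h(p,t))$, one has $\Phi(K)=\Psi(E)$ and $|\det D\Psi|=|\p_t h|$. The area formula for Lipschitz maps combined with Fubini then gives
\[
|\Phi(K)| \leq \int_E |\p_t h(p,t)|\,dp\,dt \leq 2\|f^+\|_{L^\8(I)} \int |\p\G(B_{r/4}(x_1),t)|\,dt \leq C\|f^+\|_{L^\8(I)}^{n+1}\,|K|.
\]

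The hard part is extending the Lipschitz conclusion of Lemma \ref{Deltah}(3) from the distinguished subgradient $p_1 \in \p\G(x_1,t_1)$ at the contact point $(x_1,t_1)\in\{\G=u\}$ to arbitrary $p \in \p\G(B_{r/4}(x_1),t)$. Inspecting the proof of that lemma shows that the only features actually used are (i) that the affine function $p\cdot(x-x_0)+h(p,t)$ is a supporting plane of $\G(\cdot,t)$ and therefore lies below $u$, and (ii) that its slope satisfies $|p|\leq 1/2$, which follows from the global bound $u^-\leq 1$ together with the geometric constraint that the plane is at most $0$ on $B_3$ and at least $-1$ somewhere on $B_1$. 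Both features persist for every $p$ in the subdifferential image, so the Lipschitz estimate transfers uniformly and the computation above closes.
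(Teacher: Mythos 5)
Your proof is correct and follows essentially the same route as the paper's: bound the spatial spread of the subdifferential via Corollary \ref{Flat} and the temporal spread of $h$ via Lemma \ref{Deltah}, then multiply; the paper phrases the conclusion as containment in a cylinder of controlled base and height rather than invoking the area formula, but the two bookkeeping devices are equivalent. Your final paragraph, noting that the proof of Lemma \ref{Deltah}(3) transfers to any supporting slope $p$ (not just $p_1$ at the contact point) because it only uses the supporting-plane property and the slope bound $|p|\leq 1/2$, is a careful and correct observation that the paper's proof leaves implicit.
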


\begin{proof}
We do the proof for $t_1 + \D t\leq 0$ in order to avoid the difficulties that would arise if $K$ goes beyond $t=0$. In this case the proof does not differ to much from the one we present but makes the proof more technical.
 
As a consequence of Corollary \ref{Flat} we have that $\p\G(B_{r/4}(x_1) \times \{t_1+\D t/2\}) \ss B_{Cr\|f^+\|_{L^\8([t_1-\D t,t_1])}}(p_1)$ and then
\begin{align*}
 \frac{|\p\G(B_{r/4}(x_1) \times \{t_1+\D t/2\}|}{|B_{r/4}(x_1)|} \leq C\|f^+\|_{L^\8([t_1-\D t,t_1])}^n.
\end{align*}
By Lemma \ref{Deltah},
\begin{align*}
 \Phi(B_{r/4}(x_1) \times [t_1-\D t/2,t_1+\D t/2]) \ss Cylinder,
\end{align*}
where
\begin{align*}
 Cylinder = \{(p,h):&p \in \p\G(B_{r/4}(x_1) \times \{t_1+\D t/2\}),\\
                    &h \in [h(p,t),h(p,t) + 2\D t\|f^+\|_{L^\8([t_1,t_1+\D t])}]\}.
\end{align*}
Finally the measure of $Cylinder$ is controlled by its base times the height.
\end{proof}


\subsection{Covering of the contact set}

We state now a weak version of the ABP estimate. The result consists in finding a covering of the contact set where the solution does not separate too much from the convex envelope in a given fraction of the union of the covering.

\begin{lemma}\label{SABP}
Let $u$ satisfy the hypothesis of Theorem \ref{ABP}. There exists a finite family of disjoint rectangles $\{K_j=Q_j\times I_j\}$, where $Q_j \ss \R^n$ is an open cube with diameter $d_j\leq r_0/4$ and $I_j = (-(l_j+1)\D t/2,-l_j\D t/2)$ with $l_j$ non negative integer, such that:
\begin{enumerate}
 \item $K_j \cap \{u = \G\} \neq \emptyset$,
 \item $\bigcup_j \bar K_j \supseteq \{u=\G\}$,
 \item \label{a} $\G$ is between two planes in $Q_j \times I_j$ which are separated by a distance $C\|f\|_{L^\8(I_j)}d_j^2$,
 \item \label{b} $|\Phi(K_j)| \leq C\|f\|_{L^\8(I_j)}^{n+1}|K_j|$,
 \item \label{c} $|\{u < \G + C\|f\|_{L^\8(I_j)}d_j^2\}\cap \tilde K_j| \geq (1-\e_0)|\tilde K_j|$, where $\tilde K_j = 16\sqrt n Q_j\times [-(l_j+3)\D t/2,-l_j\D t/2]$.
\end{enumerate}
\end{lemma}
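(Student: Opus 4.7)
The plan is to attach to each contact point a parabolic rectangle on which Corollaries \ref{Ring}, \ref{Flat}, and \ref{measure} furnish the local properties (1)--(5), and then to disjointify via a Vitali-type argument while retaining a covering property on the dilations.

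First I would fix $k\sim 1/(2-\s)$ from Corollary \ref{Ring}, choose a universal time scale $\D t\in(0,r_k^2)$, and partition time by the dyadic grid $I_l=(-(l+1)\D t/2,-l\D t/2]$. For each $(x_1,t_1)\in\{u=\G\}$ with $p_1\in\p\G(x_1,t_1)$, Corollary \ref{Ring} supplies a radius $r=r(x_1,t_1)\in(0,r_0)$. I take an open cube $Q$ of diameter $d=r/4$ containing $x_1$ and lying in $B_{r/4}(x_1)$, let $I=I_{l(t_1)}$ be the grid interval containing $t_1$, and form the candidate $\hat K=Q\times I$. Then (1) is immediate; (3) follows from the two-sided flatness bound of Corollary \ref{Flat}, the two enclosing planes being $p_1\cdot(x-x_0)+h(p_1,t_1)$ shifted vertically by the allowed $O(\|f\|_{L^\8}r^2)$; and (4) is exactly Corollary \ref{measure}.

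For (5), I would take the dilation $\tilde K=16\sqrt n\,Q\times\tilde I$ with $\tilde I=[-(l+3)\D t/2,-l\D t/2]$, the backward extension by $\D t$ being chosen so that $\tilde K$ contains both the ring $S=(B_r(x_1)\sm B_{r/2}(x_1))\times(t_1-\D t,t_1-\D t/2]$ from Corollary \ref{Ring} and the forward flatness region of Corollary \ref{Flat}, uniformly in $t_1\in I$. On $S$, Corollary \ref{Ring} bounds the measure of $\{u-(p_1\cdot(x-x_0)+h(p_1,t_1))\geq\|f\|_{L^\8}Mr^2\}$ by a universal constant times $M^{-1}|S|$. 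Combining this with the upper bound $\G\leq p_1\cdot(x-x_0)+h(p_1,t_1)+CMr^2$ from flatness on the rest of $\tilde K$, and then choosing $M=M(\e_0)$ sufficiently large, yields $|\{u\geq\G+C\|f\|_{L^\8}d^2\}\cap\tilde K|\leq\e_0|\tilde K|$.

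The final step is disjointification. Within each time slab $I_l$, I would order the candidate cubes $\{Q(x_1,t_1):t_1\in I_l\}$ by decreasing diameter and greedily pick a maximal disjoint subfamily (which is possible because all diameters are bounded by $r_0/4$); any unchosen cube then meets a chosen one of at least its own diameter, so the $16\sqrt n$-dilates of the chosen cubes cover the spatial projection of the contact set in that slab. Compactness of $\{u=\G\}\cap(\bar B_3\times[-1,0])$ reduces the resulting countable disjoint family to a finite one $\{K_j=Q_j\times I_{l_j}\}$ satisfying (1)--(5). I expect the main obstacle to be the bookkeeping of spatial scales: the factor $16\sqrt n$ must simultaneously absorb the Vitali enlargement \emph{and} be large enough that $16\sqrt n\,Q_j$ contains the ring from Corollary \ref{Ring} (which lives at scale $r=4d_j$, not $d_j$), while $\tilde I$ must accommodate the ring's temporal support uniformly for every $t_1\in I_{l_j}$.
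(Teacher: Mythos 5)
Your per-contact-point analysis is essentially the paper's: Corollaries \ref{Ring}, \ref{Flat}, and \ref{measure} supply properties (1), (3), (4), (5) on a small rectangle attached to $(x_1,t_1)\in\{u=\G\}$ exactly as in the paper, and the bookkeeping of the scales $r$ versus $d_j$ and of the enlarged time interval $\tilde I$ is the same. The substantive divergence is in the global combinatorics. The paper builds $\{K_j\}$ by a dyadic stopping time: tile each slab $B_1\times I_l$ by cubes of diameter $r_0/4$, keep those meeting $\{u=\G\}$, subdivide a box into $2^n$ children whenever (a), (b), or (c) fails, and observe that the process terminates after $\sim 1/(2-\s)$ generations because, for the radius $r\in[r_k,r_0]$ given by Corollary \ref{Ring}, some dyadic ancestor of each contact point has diameter between $r/8$ and $r/4$. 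You propose instead to attach a candidate rectangle to each contact point and disjointify via a Vitali-type greedy selection.

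This is where the argument fails. A Vitali selection produces a disjoint subfamily $\{K_j\}$ whose \emph{dilates} $\tilde K_j$ cover $\{u=\G\}$, not the $K_j$ themselves, so you lose property (2), $\bigcup_j\bar K_j\supseteq\{u=\G\}$. That property is not cosmetic: in the proof of Theorem \ref{ABP} the cone inclusion $\Phi(\{u=\G\})\supseteq Cone$ is combined with bound (4) through $c\leq|\Phi(\bigcup_j K_j)|\leq\sum_j|\Phi(K_j)|\leq C\sum_j\|f\|^{n+1}_{L^\8(I_j)}|K_j|$, and the first inequality requires $\{u=\G\}\ss\bigcup_j K_j$. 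Replacing $K_j$ by $\tilde K_j$ there would require the bound $|\Phi(\tilde K_j)|\leq C\|f\|^{n+1}|K_j|$, which fails in general: $16\sqrt n\,Q_j$ has spatial scale $\sim 4\sqrt n\,r$, well outside the ball $B_{r/2}(x_1)$ on which Corollaries \ref{Flat} and \ref{measure} actually control $\G$ and $\Phi$. The dyadic stopping-time decomposition is needed precisely because it delivers disjointness and covering by the undilated boxes simultaneously; a Vitali selection intrinsically trades covering for disjointness and cannot yield (2).
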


\begin{proof}
Fix a slice $B_1\times I_l$ and cover it by a tiling of the form $\{Q\times I_l\}$ where $Q$ have diameter $r_0/4$. Discard all of those rectangles that do not intersect $\{u=\G\}$. Whenever $Q\times I_l$ does not satisfy (\ref{a}), (\ref{b}) or (\ref{c}), we split $Q$ into $2^n$ cubes $Q'$ of half diameter and discard all of the rectangles $Q'\times I_l$ whose closure does not intersect $\{u=\G\}$. We need to prove that eventually all rectangles satisfy (\ref{a}), (\ref{b}) and (\ref{c}) and therefore the process finishes after a finite number of steps. In fact we will show that it will finish before $k \sim 1/(2-\s)$ iterations.
 
As before, in order to avoid technical difficulties, we assume that $l \geq 1$. Let $Q_1 \times I_l \supseteq Q_2 \times I_l \supseteq \ldots \supseteq Q_k \times I_l \ni (x_1,t_1)$ such that $(x_1,t_1)\in \{u = \G\}$ and let's see that at least one of those rectangles satisfy all the properties (\ref{a}), (\ref{b}) and (\ref{c}). From Lemmas \ref{Flat} and \ref{measure}, there is some radius $r \in [r_k,r_0]$ and some subdifferential $p_1\in\p\G(x_1,t_1)$ such that the following are true:
\begin{enumerate}
\item $|\G - (p_1\cdot(x-x_0) + h(p_1,t_1))| \leq C\|f\|_{L^\8(I_l)}r^2$ in $F = B_{r/4}\times [t_1-\D t/2, t_1+\D t/2]$,
\item $|\Phi(K)| \leq C\|f\|_{L^\8(I_l)}^{n+1}|K|$ for $K = B_{r/4}\times[t_1-\D t/2, t_1+\D t/2]$,
\item $|\{u - (p_1\cdot(x-x_0)+h(p_1,t_1)) \geq \|f\|_{L^\8(I_l)}Mr^2\} \cap S| \leq \e_0|S|$ for $S=B_r\times[t_1-\D t, t_1-\D t/2]$.
\end{enumerate}

There is one of the rectangles $Q_j \times I_l$, with $\diam(Q_j)=d$, such that $r/8 \leq d < r/4$. Therefore $Q_j \times I_l \ss K(x_1,t_1)$ and conditions (\ref{a}) and (\ref{b}) from the lemma are verified. To check (\ref{c}), notice that $S \ss 16\sqrt n Q_j \times [-(l+3)\D t/2,-l\D t/2] = \tilde K_j$, and that the volumes of $S$, $K_j$ and $\tilde K_j$ are comparable, hence
\begin{align*}
&|\{u < \G + \|f\|_{L^\8(I_l)}Md_j^2\}\cap \tilde K_j|,\\
&\geq |\{u < p_1\cdot(y-x_1)+h(p_1,t_1)+\|f\|_{L^\8(I_l)}Md_j^2\}\cap S|,\\
&\geq (1-\e_0)|S|.
\end{align*}
This is how $\m$ is chosen and this concludes the proof.
\end{proof}


\subsection{Proof of Theorem \ref{ABP}}

We have as in \cite{T} that
\begin{align*}
\Phi(\{\G = u\}) \supseteq Cone = \{(p,h): h\in[-1,0], |h| > 4|p|\}.
\end{align*}
The inclusion follows because for every $(p,h) \in Cone$ the plane $x\to p\cdot(x-x_0) + h$ can be brought from $t=-1$ towards the future until it hits the graph of $u$ (and also the graphs of $\bar u$ and $\G$) for the first time.

Therefore for some universal constants, 
\begin{align*}
C &\leq |\Phi(\cup_j K_j)| \leq \sum_j |\Phi(K_j)| \leq C\sum_j \|f\|_{L^\8(I_j)}^{n+1}|K_j|
\end{align*}
We group now the previous sum in each interval $J_l = (-(l+1)\D t/2, -l\D t/2)$,
\begin{align*}
C \leq \sum_l \|f\|_{L^\8(J_l)}^{n+1}\sum_{I_j = J_l}|K_j| \leq \sum_l \|f\|_{L^\8(J_l)}^{n+1} \left|\bigcup_{I_j = J_l}K_j\right|
\end{align*}
By Besicovitch, we can take a sub set of $\{\tilde K_j\}_{I_j = J_l}$ (denoted by the same) with the finite intersection property and still covering $\cup_{I_j = J_l}K_j$ such that
\begin{align*}
\sum_{I_j = J_l} |\tilde K_j| &\leq C\sum_{I_j = J_l} |\{u < \G + C4^{-1/(2-\s)}\|f\|_{L^\8(J_l)}\} \cap \tilde K_j|,\\
&\leq C\left|\{u < \G + C4^{-1/(2-\s)}\|f\|_{L^\8(J_l)}\} \cap \1\cup_{I_j = J_l}\tilde K_j\2\right|.
\end{align*}
Notice that the contact set $\{u=\G\}$ can only occur where $f\chi_{B_{\r_0}}$ is positive. This implies that $\1\cup_{I_j = J_l}\tilde K_j\2 \ss B_{9\sqrt n \r_0}$ and then we have the following Riemann sum which is now independent of the covering,
\begin{align*}
c \leq \sum_l \|f\|_{L^\8(J_l)}^{n+1}\left|\{u < \G + C4^{-1/(2-\s)}\|f\|_{L^\8(J_l)}\} \cap B_{9\sqrt n \r_0}\times \tilde J_l\right|
\end{align*}
where $\tilde J_l = (-(l+3)\D t/2, -l\D t/2)$. Now we just have to send $\D t$ to zero to conclude the theorem.

In \cite{C1} the partial ABP involves a Riemann sum of $|f|^n$ which gets refined in the limit, when $\s$ goes to 2, and allows to recover the classic ABP. The estimate presented here is weaker, assuming that the right hand side of the equation is $f(x,t)$ we notice that in Lemma \ref{Deltah} we need to take a global $L^\8$ norm in space and not just around the contact point. All the other proofs work fine in this sense. Our proofs can recover the following consequence of the classical ABP as $\s$ goes to two,
\begin{align*}
\sup_{B_1\times(-1,0]} u^- \leq C \1\int_{\{\Gamma = u\}}\sup_{y \in B_1}(f^+)^{n+1}dt\2^{1/n+1}.
\end{align*}


\section{Point Estimate}\label{SPE}

We are interested now in proving a point estimate that will allow us to control the oscillation of the solution. Our goal is the following theorem. (Recall that $f \in C[-1,0]$ and non negative).

\begin{theorem}[Point Estimate]\label{PE}
Let $\s_0 \in (0,2)$ and $\s \in (\s_0,2)$. Suppose $u$ satisfies 
\begin{align*}
u_t - \cM^-_{\cL_0(\s)}u &\geq -f(t) \text{ in } B_1 \times (-1,0],\\
u &\geq 0 \text{ in } \R^n\times[-1,0].
\end{align*}
Then, for every $s\geq 0$, 
\begin{align*}
\frac{|\{u > s\} \cap B_{1/2} \times [-1,-1/2]|}{|B_{1/2} \times [-1,-1/2]|} \leq C\1\inf_{B_{1/2} \times [-1/2,0]}u + \|f^+\|_{L^\8([-1,0])}\2^\e s^{-\e},
\end{align*}
for some constants $\e$, $C$ depending only on $n$, $\L$ and $\s_0$.
\end{theorem}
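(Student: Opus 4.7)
The plan is to follow the Krylov--Safonov scheme as adapted by L.~Wang \cite{W} to the parabolic setting, transferred to nonlocal operators along the lines of \cite{C1}. After dividing $u$ by $\inf_{B_{1/2}\times[-1/2,0]}u+\|f^+\|_\infty$ we reduce to showing $|\{u>s\}\cap B_{1/2}\times[-1,-1/2]|\leq Cs^{-\e}$ for every $s\geq 1$, under the normalized hypotheses $u\geq 0$, $u_t-\cM^-_{\cL_0}u\geq -\e_0$ in $B_1\times(-1,0]$, and $\inf_{B_{1/2}\times[-1/2,0]}u\leq 1$.

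The first step is a measure-to-point estimate based on Theorem \ref{ABP2}: I claim there exist universal $M>1$ and $\m\in(0,1)$ such that this normalization forces
\begin{align*}
|\{u\leq M\}\cap B_{1/2}\times[-1,-1/2]|\geq \m.
\end{align*}
To prove it I would build an auxiliary function $\Phi$, from a cut-off and a scaled translate of the barrier of Lemma \ref{barrier}, such that $\Phi\leq -3$ on the parabolic region where $u$ is assumed to be small, $\Phi\geq 0$ outside $B_1\times(-1,0]$, and $\Phi_t-\cM^+_{\cL_0}\Phi\leq K\chi_{B_{\r_0}}$ with a universal $K$ that stays bounded as $\s\uparrow 2$. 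Then $w=u+\Phi$ satisfies $w_t-\cM^-_{\cL_0}w\geq -(\e_0+K\chi_{B_{\r_0}})$, $w\geq 0$ outside the domain, and $\sup w^-\geq 2$. Applying Theorem \ref{ABP2} to $w$ gives a lower bound on $|\{w<\G_w+C_04^{-1/(2-\s)}\}\cap B_{1/2+9\sqrt n 2^{-1/(2-\s)}\r_0}|$, and since $\Phi$ is universally bounded below, this immediately translates into the claimed bound on $|\{u\leq M\}|$.

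The second step is the iteration via a parabolic Calder\'on--Zygmund decomposition with cubes stacked forward in time. For a parabolic cube $Q=Q'\times I$ of small size one defines its stacked predecessor $\tilde Q=Q'\times\tilde I$ by shifting $I$ backwards by a universal multiple of $(\diam Q')^\s$. A rescaled version of the measure lemma applied inside each cube of the decomposition yields the dichotomy: if $|\{u>M^k\}\cap Q|>(1-\m)|Q|$ for a dyadic $Q$, then $\tilde Q\ss\{u>M^{k-1}\}$. Iterating this in the usual way gives $|\{u>M^k\}\cap B_{1/2}\times[-1,-1/2]|\leq C(1-\m)^k$; interpolating between integer powers of $M$ with $\e=-\log(1-\m)/\log M$ produces the $s^{-\e}$ decay, and undoing the normalization introduces the factor $(\inf u+\|f^+\|_\infty)^\e$.

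The hardest part is the barrier--ABP combination in Step~1. The $\cM^+_{\cL_0}$-bound on $\Phi$ must remain uniform as $\s\to 2$, and the $\s$-dependent factor $4^{-1/(2-\s)}$ together with the enlarged ball $B_{1/2+9\sqrt n 2^{-1/(2-\s)}\r_0}$ appearing in Theorem \ref{ABP2} must be absorbed by choosing $\r_0$ small enough that the resulting constants become truly universal. These are precisely the issues that motivated the peculiar shape of the partial ABP established in Section \ref{SectionABP}.
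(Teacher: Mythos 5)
Your proposal follows the same architecture as the paper's proof: a barrier function to concentrate the right-hand side for the ABP, then a parabolic Calder\'on--Zygmund decomposition with cubes stacked in time, then a geometric iteration. But there is a genuine gap in Step~1: the barrier of Lemma~\ref{barrier} cannot furnish the function you need. Lemma~\ref{barrier}'s $\psi$ satisfies only $\psi_t-\cM^+_{\cL_0}\psi\geq 0$ outside $B_1$, with no quantitative gap, whereas the ABP reduction requires a strict subsolution with a forcing of size $1$ supported outside a small ball. The paper builds such a Special Function from scratch in Lemma~\ref{SpecialFunction}: a power-law spatial profile $|x|^{-p}$ (from Section~9 of \cite{C1}) multiplied by a time profile $m(t)$ equal to $t^{1/2}$ near $t=0$ and decaying exponentially afterwards. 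This anisotropic time-dependence is precisely what is needed to start from zero data at $t=0$ and to keep the subsolution inequality uniform in $\s\in(\s_0,2)$; a translated and cut-off copy of $\psi$ has none of these features.

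There is also a sign slip in your Step~1 computation. Since $\cM^-_{\cL_0}(u+\Phi)\leq\cM^-_{\cL_0}u+\cM^+_{\cL_0}\Phi$, what propagates to $w=u+\Phi$ is a \emph{lower} bound $\Phi_t-\cM^+_{\cL_0}\Phi\geq-K\chi_{B_{\r_0}}$, not the upper bound $\leq K\chi_{B_{\r_0}}$ you state; the upper bound is in fact false for any barrier of the correct shape, since outside the small ball $\Phi_t-\cM^+_{\cL_0}\Phi$ must be bounded below by a positive constant, not above by zero. Finally, the stacked cubes in the Calder\'on--Zygmund step must be shifted \emph{forward} in time (towards $t=0$), so that $u$ being large on most of a past cube forces $u$ to be large in the future stack; this is the direction in Lemma~\ref{CZ} and Lemma~\ref{stack}. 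Shifting ``backwards'' inverts the parabolic causality the argument relies on. The remainder of your plan (rescaled measure estimate inside each dyadic cube, the anisotropic dyadic splitting, interpolation in $s$) does match Lemmas~\ref{CZ1}--\ref{Discrete} of the paper.
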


The proof of Theorem \ref{PE} is done by induction as in \cite{W}. The idea is to get a control of the measure of the set where $u$ is bigger than a universal constant and then being able to reproduce the estimate at every scale.

\subsection{Initial configurations}


\begin{lemma}[Special Function]\label{SpecialFunction}
Let $\s_0 \in (0,2)$, $\s \in (\s_0,2)$. There is a function $p(x,t)\in C(\R^n\times [0,80])$ and a constant $C>1$, such that for any $\s \in (\s_0,2)$,
\begin{align*}
p_{t^-}-\cM^-_{\cL_0(\s)}p &\leq -1+C\chi_{B_{1/4}\times (0,1]} \text{ in } B_{4\sqrt n}\times (0,80],\\
p(x,t) &\leq 0 \text{ in } ((\R^n\sm B_{2\sqrt n})\times(0,80]) \cup (\R^n\times\{0\}),\\
p(x,t) &> 2 \text{ in } Q_3\times[1,80].
\end{align*}
The function $p$ will also be $C^{1,1}$ in the space variable and $C^1$ in the time variable (with respect to the parabolic topology), so that the computation of the equation is done in the classical sense.
\end{lemma}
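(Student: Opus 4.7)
The plan is to adapt the classical elliptic Caffarelli--Silvestre barrier (Corollary~9.3 of \cite{C1}) to the parabolic setting, building $p$ as a stationary spatial barrier $\varphi(x)$ plus two time-dependent corrections. For the spatial piece, I would produce a radial $\varphi\in C^{1,1}(\R^n)$, supported in $B_{2\sqrt n}$, with $\varphi\ge 3$ on $Q_3$, and
\[
\cM^-_{\cL_0(\s)}\varphi\ge c_1 \text{ in } B_{4\sqrt n}\sm B_{1/4}, \qquad \cM^-_{\cL_0(\s)}\varphi\ge -C_0 \text{ in } B_{1/4},
\]
uniformly in $\s\in[\s_0,2)$. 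Concretely, $\varphi$ is a $C^{1,1}$ smoothing of the truncated power $(|x|^{-\a}-(2\sqrt n)^{-\a})_+$ for $\a$ large; sub-harmonicity of $|x|^{-\a}$ away from the origin together with the $(2-\s)$ factor in the extremal operator give the uniform lower bound $c_1$ in the annulus, while the $C^{1,1}$ smoothing of the singular tip is what forces the weaker bound $-C_0$ inside $B_{1/4}$. After an upward rescaling $\varphi\mapsto\l\varphi$ I may assume $c_1\ge 2\|\varphi\|_\8+2$.

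Next, set $\tilde p(x,t):=\varphi(x)-h(t)$, where $h\in C^1([0,80])$ is non-increasing, $h(0)=\|\varphi\|_\8$, and $h\equiv 0$ on $[1,80]$. Since $h$ is spatially constant, $\cM^-_{\cL_0(\s)}\tilde p=\cM^-_{\cL_0(\s)}\varphi$, hence
\[
\tilde p_t-\cM^-_{\cL_0(\s)}\tilde p=-h'(t)-\cM^-_{\cL_0(\s)}\varphi.
\]
The rescaling choice gives $\le -2$ on $(B_{4\sqrt n}\sm B_{1/4})\times(0,80]$, and on $B_{1/4}\times(0,1]$ the same expression is bounded by a universal constant, which is absorbed into $C\chi_{B_{1/4}\times(0,1]}$ after taking $C$ large. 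The properties $\tilde p(\cdot,0)\le 0$, $\tilde p\le 0$ outside $B_{2\sqrt n}$, and $\tilde p=\varphi\ge 3>2$ on $Q_3\times[1,80]$ are immediate.

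The only region left to handle is $B_{1/4}\times(1,80]$, where $\tilde p_t-\cM^-_{\cL_0}\tilde p$ may be as large as $C_0$. I would fix it by subtracting a localized correction $g(t)\psi(x)$, with $\psi\in C^{1,1}(\R^n)$ non-negative, supported in $B_{1/2}$, satisfying $\psi\ge c_\psi>0$ on $\bar B_{1/4}$ and $\|\cM^+_{\cL_0(\s)}\psi\|_\8\le\k$ for a small $\k>0$ (achievable uniformly in $\s$ by tuning the amplitude), and $g\in C^1([0,80])$ vanishing on $[0,1]$ with $g'\equiv A$ on $[1+\d,80]$ for a large $A$ and small $\d$. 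Concavity of $\cM^-$ gives
\[
p_t-\cM^-_{\cL_0(\s)}p\le\bigl(\tilde p_t-\cM^-_{\cL_0(\s)}\tilde p\bigr)-g'(t)\psi(x)+g(t)\cM^+_{\cL_0(\s)}\psi(x).
\]
Choose $A$ large enough that $Ac_\psi\ge C_0+1$, and then $\k\le 1/(80A)$. Inside $B_{1/4}\times(1+\d,80]$, the term $-Ac_\psi$ absorbs $C_0$ (so the right side is $\le -1$); outside $B_{1/4}\times(1+\d,80]$, the extra slack between $-2$ and $-1$ established in Step~2 absorbs the contribution $g\cM^+\psi\le 80A\k\le 1$.

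The main obstacle is this last balancing act in Step~3, where $\psi$ must be strongly positive on $B_{1/4}$ to beat $C_0$ yet the nonlocal $\cM^+\psi$ must be globally small so that the correction does not destroy the subsolution inequality outside $B_{1/4}$, and all constants must remain stable as $\s\to 2^-$. The $(2-\s)$ factor in $\cM^\pm_{\cL_0(\s)}$ is precisely what enables the $\s$-uniformity of each estimate. Since $\varphi,\psi\in C^{1,1}$ and $h,g\in C^1$, the constructed $p$ is $C^{1,1}$ in $x$ and $C^1$ in $t$, so the PDE is evaluated classically.
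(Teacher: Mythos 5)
The central step in your construction does not go through: the claim that ``after an upward rescaling $\varphi\mapsto\l\varphi$ I may assume $c_1\ge 2\|\varphi\|_\8+2$'' is unjustified. The extremal operator $\cM^-_{\cL_0(\s)}$ is positively homogeneous, so replacing $\varphi$ by $\l\varphi$ multiplies \emph{both} $c_1$ and $\|\varphi\|_\8$ by the same factor $\l$; the ratio $c_1/\|\varphi\|_\8$ is scale-invariant under this operation, and there is no reason to believe that the Caffarelli--Silvestre barrier with the fixed geometry (supported in $B_{2\sqrt n}$, $\ge 3$ on $Q_3$, $\cM^-\varphi\ge c_1$ on $B_{4\sqrt n}\sm B_{1/4}$) satisfies $c_1>2\|\varphi\|_\8$ to begin with. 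The condition is needed in an essential way: since $h$ must fall from $h(0)=\|\varphi\|_\8$ to $h(1)=0$, the mean value theorem forces $-h'(t)\ge\|\varphi\|_\8$ somewhere on $(0,1)$, so the inequality $-h'(t)-\cM^-\varphi\le -2$ on the annulus already requires $c_1\ge\|\varphi\|_\8+2$ even before the correction of Step~3. Without that, the subsolution inequality fails during the raising phase $t\in(0,1)$, precisely the delicate part of the lemma.

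The paper avoids this difficulty with a qualitatively different ansatz, namely the self-similar barrier $\tilde p(x,t)=m(t)f(t^{-2/\s_0}x)$ with $m(t)=t^{1/2}$ near $t=0$. Here the barrier literally vanishes at $t=0$ (so the initial condition is trivial), and the negative term $-t^{1/2-2\s/\s_0}\cM^-_{\cL_0}f(y)\sim -t^{-3/2}$ \emph{blows up faster} as $t\to 0^+$ than the positive terms $m'(t)f\sim t^{-1/2}$ and $m(t)t^{-1}Df\cdot y\sim t^{-1/2}$ coming from the time derivative; the subsolution inequality then holds for $t\le\t$ by taking $\t$ small, with no need for the spatial barrier to dominate its own sup norm. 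Your stationary-in-$x$ ansatz $\varphi(x)-h(t)$ cannot produce this competition because the $\cM^-$ term stays bounded while the $h'$ term must be large somewhere on $(0,1)$. If you want a stationary spatial profile you would need a much slower ``raising'' (e.g.\ multiplying $\varphi$ by a factor $m(t)$ growing from $0$, rather than subtracting a constant), but a direct computation shows that the resulting ODE $m'\|\varphi\|_\8\le c_1 m-1$ with $m(0)=0$ forces $m'<0$ initially, which again fails. The self-similar rescaling of the spatial variable is not decorative here; it is the mechanism that makes the barrier emerge from zero while remaining a strict subsolution.
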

 
\begin{proof}
Consider
\begin{align*}
f(x) = \begin{cases}
|x|^{-p} &\text{ in $\R^n \sm B_\d$},\\
q &\text{ in $B_\d$},
\end{cases}
\end{align*}
where $q$ is a quadratic polynomial chosen so that $f$ is $C^{1,1}$ across $\p B_\d$. From Section 9 in \cite{C1} we know that it satisfies $\cM^-_{\cL_0(\s)}f > 0$ in $\R^n \sm B_{1/4}$ for some sufficiently large $p>0$ and some sufficiently small $\d\in(0,1/4)$, independently of $\s \in (\s_0,2)$. By multiplying $f$ by a sufficiently large constant we can also assume that
\begin{align*}
\cM^-_{\cL_0(\s)}f &\geq 1 - C_0\chi_{B_{1/4}} \text{ in $B_{4\sqrt n}$},\\
|Df(x)\cdot x| &\leq C_1 \text{ in $\R^n$},\\
f &< C_2 \text{ in $\R^n$}.
\end{align*}

Consider also a continuous function $m(t) \geq 0$ such that for some $\t\in(0,1)$ to be fixed,
\begin{align*}
m(t) &= t^{1/2} \text{ in  $[0,\t]$},\\
m(t) &= \t^{1/2}e^{-\frac{C_1+C_0}{C_2\t}(t-\t)} \text{ in $(\t,80]$},
\end{align*}

Let $\tilde p(x,t) = m(t)f(y)$, for $y = t^{-2/\s_0}x$. Notice that $\tilde p(\cdot,0) \equiv 0$ defines $\tilde p$ continuously up to time zero.

In the region $B_{4\sqrt n} \sm B_{1/4}\times(0,\t]$ we have that,
\begin{align*}
(\tilde p_t - \cM^-_{\cL_0}\tilde p)(x,t) &= \frac{f(y)}{2t^{1/2}} - (2/\s_0) t^{-1/2}Df(C_2y)\cdot y - t^{1/2-2\s/\s_0}\cM^-_{\cL_0}f(y),\\
&< (C_2/2 + 2C_1/\s_0)t^{-1/2} - t^{-3/2},
\end{align*}
Therefore, by choosing $\t \leq 1/(C_2/2 + 2C_1/\s_0)$ we make $p_t - \cM^-_{\cL_0}p < 0$ in $B_{4\sqrt n} \sm B_{1/4}\times[0,\t]$.

In the region $B_{4\sqrt n}\times(\t,80]$ we have that,
\begin{align*}
(\tilde p_t - \cM^-_{\cL_0}\tilde p)(x,t) &= m'(t)f(y) - \frac{m(t)}{t}Df(C_2y)\cdot y - \frac{m(t)}{t^{\s/\s_0}}\cM^-_{\cL_0}f(y),\\
&< m'(t)C_2 + m(t)(C_1 + C_0)/\t,
\end{align*}
which is zero by the construction of $m$ in $(\t,80]$.

Finally, we define $p = A(\tilde p - B)^+$ with $B\geq 0$ chosen such that $\tilde p - B \leq 0$ in $((\R^n\sm B_{2\sqrt n})\times(0,80]) \cup (\R^n\times\{0\})$ and $A\geq1$ chosen such that $p > 2$ in $Q_3\times[1,80]$ and $p_t-\cM^-_{\cL_0(\s)}p \leq -1$ in $B_{4\sqrt n}\times(0,80]$.
\end{proof}

We are in shape now to prove a first control of the distribution.

\begin{lemma}[Base configuration] \label{base}
Let $\s_0 \in (0,2)$, $\s \in (\s_0,2)$, and $u$ a function such that
\begin{align*}
u_t-\cM^-_{\cL_0(\s)}u &\geq -1 \text{ in } B_{4\sqrt n}\times (0,80],\\
u &\geq 0 \text{ in } \R^n\times[0,80],\\
\inf_{Q_3\times[1,80]}u &\leq 1,
\end{align*}
Then
\begin{align*}
\left|\3u > M_0\4\cap Q_1 \times [0,1]\right| \leq \m_0|Q_1 \times [0,1]|,
\end{align*}
for some universal constants $\m_0 \in (0,1)$ and $M_0>1$.
\end{lemma}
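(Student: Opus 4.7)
The plan is to combine $u$ with the special function $p$ from Lemma~\ref{SpecialFunction} and apply the partial ABP of Theorem~\ref{ABP2} to $v := u - p$. Since $u$ is a viscosity supersolution of $w_t - \cM^-_{\cL_0}w = -1$ while $p$ is a classical subsolution of $w_t - \cM^-_{\cL_0}w = -1 + C\chi_{B_{1/4}\times(0,1]}$, Theorem~\ref{Eqdiff} applied with $p$ as the subsolution and $u$ as the supersolution yields $(p-u)_t - \cM^+_{\cL_0}(p-u) \leq C\chi_{B_{1/4}\times(0,1]}$. Using $\cM^+ w = -\cM^-(-w)$, this is equivalent to
\begin{align*}
v_t - \cM^-_{\cL_0}v \geq -C\chi_{B_{1/4}\times(0,1]} \text{ in } B_{4\sqrt n}\times(0,80].
\end{align*}
Furthermore $v \geq 0$ on $((\R^n\sm B_{2\sqrt n})\times(0,80])\cup(\R^n\times\{0\})$, because $u\geq 0$ everywhere while $p\leq 0$ off $B_{2\sqrt n}\times(0,80]$ and at the initial time.

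From the hypothesis $\inf_{Q_3\times[1,80]}u \leq 1$, the lower semicontinuity of $u$ supplies a point $(x^*,t^*)\in Q_3\times[1,80]$ with $u(x^*,t^*)\leq 1$. Because $p(x^*,t^*)>2$ and $Q_3\subset B_{3\sqrt n/2}\subset B_{2\sqrt n}$, we get $v(x^*,t^*)< -1$, and therefore $\sup_{B_{2\sqrt n}\times(0,80]}v^- \geq 1$.

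Now invoke Theorem~\ref{ABP2} for $v$ with nested domains $\W_2 = B_{1/4}\subset\subset \W_1 = Q_1\subset\subset \W_0 = B_{2\sqrt n}\subset\subset \W = B_{4\sqrt n}$, time length $d^\s = 80$ (so $d=80^{1/\s}$ stays bounded for $\s\in(\s_0,2)$), and source $f(t)=C\chi_{(0,1]}(t)$, approximated from below by continuous functions if necessary. The ABP estimate produces
\begin{align*}
1 \leq C\1\int_0^1 |\{v(\cdot,t)<\G_v(\cdot,t)+C'\}\cap Q_1|\,dt\2^{1/(n+1)},
\end{align*}
where $\G_v$ is the parabolic convex envelope associated to the ABP and $C' = C\cdot 4^{-1/(2-\s)}$ is bounded above by the universal constant $C\cdot 4^{-1/(2-\s_0)}$. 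Consequently the set $E = \{v<\G_v+C'\}\cap Q_1\times(0,1]$ satisfies $|E|\geq c$ for some universal $c>0$.

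To convert this into a bound on $u$, observe that since $v\geq 0$ on the parabolic boundary of $\W_0$ inside $\W$, the associated parabolic convex envelope satisfies $\G_v\leq 0$ in the domain. Hence on $E$,
\begin{align*}
u = v + p < \G_v + C' + p \leq C' + \|p\|_{\infty} =: M_0,
\end{align*}
so $|\{u< M_0\}\cap Q_1\times(0,1]|\geq c$, and therefore
\begin{align*}
|\{u> M_0\}\cap Q_1\times[0,1]| \leq |Q_1\times[0,1]| - c = \m_0\,|Q_1\times[0,1]|
\end{align*}
with $\m_0 := 1 - c/|Q_1\times[0,1]| < 1$. The main technical point is keeping all constants uniform as $\s\to 2$: this rests on the bounds $4^{-1/(2-\s)}\leq 4^{-1/(2-\s_0)}$ and $80^{1/\s}\leq 80^{1/\s_0}$, which ensure $C'$, $M_0$ and $\m_0$ are universal. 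A secondary subtlety is the discontinuity of $f=C\chi_{(0,1]}$ in time, handled by a routine approximation from below by continuous sources before passing to the limit in Theorem~\ref{ABP2}.
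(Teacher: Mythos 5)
Your argument is correct and essentially identical to the paper's: form $v = u - p$ with $p$ the special function of Lemma~\ref{SpecialFunction}, check that $v$ satisfies the hypotheses of the generalized ABP Theorem~\ref{ABP2}, and read off the measure estimate from the resulting integral lower bound. The only notable departure is the choice $\W_0 = B_{2\sqrt n}$ rather than the paper's $\W_0 = Q_3$, and yours is in fact the more careful one, since Lemma~\ref{SpecialFunction} only guarantees $p\leq 0$ outside $B_{2\sqrt n}$, not outside the smaller cube $Q_3$.
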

 
\begin{proof}
Let $v=u-p$, where $p$ was constructed in the previous lemma. The graph of $v$ goes below $-1$ at some point in $Q_3\times[1,80]$, stays non negative in $((\R^n \sm Q_3)\times(1,80]) \cup (\R^n\times\{0\})$ and satisfies
\begin{align*}
v_t - \cM^-_{\cL_0} v \geq (u_t - \cM^-_{\cL_0}u) - (p_t - \cM^-_{\cL_0}p) \geq -C\chi_{B_{1/4}\times(0,1]}.
\end{align*}

We apply now a Theorem \ref{ABP2} to $v$ with $\W = B_{4\sqrt n}$, $\W_0 = Q_3$, $\W_2 = B_{1/4}$ and $\W_1 = Q_1$,
\begin{align*}
c \leq \int_0^1|\{v < \G_v + C\}\cap B_{1/2}|dt \leq |\{u < p + C\} \cap Q_1 \times [0,1]|.
\end{align*}
We just choose $\m_0 = 1-c$ and $M_0 = \sup_{Q_1 \times [0,1]} p + C$ to conclude.
\end{proof}

We will need the following corollaries of the previous lemma. The first one will be necessary for the particular dyadic decomposition we will introduce in the next section. The second one iterates $m$ times Corollary \ref{Basetau}.

\begin{corollary}[Flexible configuration] \label{Basetau}
Let $\s_0 \in (0,2)$, $\s \in (\s_0,2)$, $\t\in[1,8]$ and $u$ a function such that
\begin{align*}
u_t-\cM^-_{\cL_0(\s)}u &\geq -1 \text{ in } B_{4\sqrt n}\times (0,(3^\s+1)\t],\\
u &\geq 0 \text{ in } \R^n\times[0,(3^\s+1)\t],\\
\inf_{Q_3\times[\t,(3^\s+1)\t]}u &\leq 1.
\end{align*}
Then for $\m_1 = \frac{7+\m_0}{8}$,
\begin{align*}
\left|\3u > M_0\4\cap Q_1 \times [0,\t]\right| \leq \m_1|Q_1 \times [0,\t]|,
\end{align*}
for $\m_0 \in (0,1)$ and $M_0>1$ as in Lemma \ref{base}.
\end{corollary}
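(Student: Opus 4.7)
The plan is to rerun the argument of Lemma~\ref{base} on the shorter cylinder $B_{4\sqrt n}\times(0, (3^\s+1)\t]$, using the \emph{same} special function as in Lemma~\ref{SpecialFunction}, and then extend the resulting measure bound from $Q_1\times[0,1]$ to $Q_1\times[0,\t]$ by the trivial estimate. The crucial numerical observation is that, for $\t \in [1,8]$ and $\s \in (\s_0, 2)$, one has $\t \geq 1$ and $(3^\s + 1)\t < 10\cdot 8 = 80$; consequently, the infimum set $Q_3 \times [\t, (3^\s+1)\t]$ sits inside $Q_3 \times [1,80]$, the special function $p$ of Lemma~\ref{SpecialFunction} already satisfies $p > 2$ throughout the former, and its differential inequality $p_t - \cM^-_{\cL_0}p \leq -1 + C\chi_{B_{1/4} \times (0,1]}$ continues to hold on our (shorter) cylinder.

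Following Lemma~\ref{base}, I would set $v := u - p$ on $B_{4\sqrt n} \times (0, (3^\s+1)\t]$. Then $v$ is non-negative on $((\R^n \sm Q_3)\times (0,(3^\s+1)\t])\cup(\R^n \times \{0\})$, drops below $-1$ at some point of $Q_3 \times [\t, (3^\s+1)\t]$ (since $u \leq 1$ there while $p > 2$), and is a super-solution of $v_t - \cM^-_{\cL_0} v \geq -C\chi_{B_{1/4} \times (0,1]}$. Applying Theorem~\ref{ABP2} with $\W = B_{4\sqrt n}$, $\W_0 = Q_3$, $\W_1 = Q_1$, $\W_2 = B_{1/4}$ and noting that the right-hand side is supported in the time interval $[0,1]$, the ABP integral collapses onto $[0,1]$ and the argument delivers exactly the same conclusion as in Lemma~\ref{base},
\begin{align*}
|\{u > M_0\} \cap Q_1 \times [0,1]| \leq \m_0 |Q_1|,
\end{align*}
with the identical $\m_0$ and $M_0 = \sup_{Q_1 \times [0,1]} p + C$.

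Finally, I would split $[0,\t] = [0,1]\cup[1,\t]$ and bound the second piece by its full measure, obtaining
\begin{align*}
|\{u > M_0\} \cap Q_1 \times [0,\t]| \leq \m_0|Q_1| + (\t-1)|Q_1| = (\m_0 + \t -1)|Q_1|.
\end{align*}
The desired inequality $(\m_0 + \t - 1)/\t \leq (7+\m_0)/8$ is equivalent to $(8-\t)(\m_0 - 1) \leq 0$, which is immediate for $\t \in [1,8]$ and $\m_0 \in (0,1)$, with equality exactly at $\t = 8$.

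There is no genuinely hard step here: the proof boils down to the book-keeping observation that $\t \in [1,8]$ and $\s < 2$ force both $\t \geq 1$ and $(3^\s+1)\t \leq 80$, which allows Lemma~\ref{SpecialFunction}'s $p$ to be reused verbatim without constructing a new barrier, followed by the elementary $\t \leq 8$ algebra. The one place where care is needed is in observing that Theorem~\ref{ABP2} produces a bound only on $Q_1 \times [0,1]$ (and \emph{not} on the larger $Q_1 \times [0,\t]$), and it is exactly this loss that forces $\m_1 = (7+\m_0)/8$ to be strictly larger than $\m_0$ in the statement of the corollary.
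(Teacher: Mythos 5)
Your proof is correct and follows essentially the same route as the paper's: apply (the argument of) Lemma~\ref{base} to obtain the measure bound on $Q_1\times[0,1]$, then split $[0,\t]=[0,1]\cup[1,\t]$, bound the second piece trivially, and verify that $\m_0+\t-1\leq\tfrac{7+\m_0}{8}\t$ for $\t\in[1,8]$. The one small difference is that the paper simply cites Lemma~\ref{base} directly, whereas you re-run its barrier/ABP argument on the shorter cylinder $B_{4\sqrt n}\times(0,(3^\s+1)\t]$; your version is arguably the more scrupulous reading, since the hypotheses of the Corollary posit the equation and the sign condition only up to time $(3^\s+1)\t<80$ rather than up to $80$ as Lemma~\ref{base} literally requires, but because the drop point $t^*$ lies in $[\t,(3^\s+1)\t]$ and the right-hand side of the equation for $v=u-p$ is supported in $(0,1]$, the ABP needs nothing beyond $(3^\s+1)\t$, so both versions go through.
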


\begin{proof}
Notice that $Q_3\times[\t,(3^\s+1)\t] \ss Q_3\times[1,80]$ therefore if $u$ goes below 1 in $Q_3\times[\t,(3^\s+1)\t]$ then $\left|\3u > M_0\4\cap Q_1 \times [0,1]\right| \leq \m_0|Q_1 \times [0,1]|$ and
\begin{align*}
\left|\3u > M_0\4\cap Q_1 \times [0,\t]\right| &\leq \m_0|Q_1 \times [0,1]| + \left|\3u > M_0\4\cap Q_1 \times [1,\t]\right|,\\
&\leq \frac{7 + \m_0}{8}|Q_1 \times [0,\t]|.
\end{align*}
\end{proof}

\begin{corollary}[Iteration]\label{Iteration}
Let $\s_0 \in (0,2)$, $\s \in (\s_0,2)$, $\t\in[1,8]$, $k\geq 1$ a natural number, $d_i = \frac{3^{\s(i+1)}-1}{3^\s-1}$ and $u$ a function such that
\begin{align*}
u_t-\cM^-_{\cL_0(\s)}u &\geq -1 \text{ in } B_{2\sqrt n 3^k}\times \left(0,d_k\t\right],\\
u &\geq 0 \text{ in } \R^n\times\left[0,d_k\t\right],\\
\inf_{\cup_{i=1}^{k-1} Q_{3^{\s i}}\times\left[d_i\t,d_{i+1}\t\right]}u &\leq 1.
\end{align*}
Then
\begin{align*}
\left|\3u > M_0^k\4\cap Q_1 \times [0,\t]\right| \leq \m_1|Q_1 \times [0,\t]|,
\end{align*}
for $\m_1 \in (0,1)$ and $M_0>1$ as in Corollary \ref{Basetau}.
\end{corollary}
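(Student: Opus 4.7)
The argument is by induction on $k\geq 1$. The base case $k=1$ is a direct invocation of Corollary~\ref{Basetau}: the innermost shell of the infimum hypothesis sits inside $Q_3\times[\t,(3^\s+1)\t]$, the region on which the Flexible configuration operates, so its conclusion gives the bound at exponent $M_0$.

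For the inductive step, assume the statement at level $k-1$ and let $u$ satisfy the hypotheses at level $k$. The infimum condition provides a point at which $u\leq 1$ in some shell $Q_{3^{\s j}}\times[d_j\t,d_{j+1}\t]$. If $j$ is the smallest index (so that Corollary~\ref{Basetau} applies to $u$ directly), conclude using $M_0^k\geq M_0$. Otherwise, introduce the parabolic rescaling
\[
 v(y,s) \;:=\; \frac{u(3y,\,3^\s s+\t)}{M_0}.
\]
The telescoping identity $d_k-1=3^\s d_{k-1}$ (immediate from the definition of $d_i$) places $v$ on $B_{2\sqrt n\,3^{k-1}}\times(-\t/3^\s,\,d_{k-1}\t]$ and realigns shell $j$ of $u$ onto shell $j-1$ of $v$. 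Choosing $M_0$ universally large enough that $M_0\geq 3^\s$ (possible since $3^\s<9$ for $\s\in(\s_0,2)$), one checks that
\[
 v_s-\cM^-_{\cL_0(\s)}v \;=\;\frac{3^\s}{M_0}\bigl(u_t-\cM^-_{\cL_0(\s)}u\bigr)\;\geq\;-1,\qquad v\geq 0,
\]
and that $\inf v\leq 1/M_0\leq 1$ on the analogous shell union for $v$ at level $k-1$. Applying the inductive hypothesis to $v$ yields a measure estimate which, after pullback through the change of variables, controls $\{u>M_0^k\}$ on $Q_3\times[\t,(3^\s+1)\t]$; since $\m_1<1$ there must exist a point in that region where $u\leq M_0^k$, and a final application of Corollary~\ref{Basetau} to the scaled function $u/M_0^k$ transfers the measure estimate onto the target region $Q_1\times[0,\t]$ at the correct exponent.

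The principal technical obstacle is the shell-matching bookkeeping under rescaling: checking that the identity $d_k-1=3^\s d_{k-1}$ carries outer shells of $u$ exactly onto inner shells of $v$, that the spatial cubes transform consistently under $(x,t)\mapsto(3y,3^\s s+\t)$, and that positivity together with the equation domain $B_{2\sqrt n\,3^k}\to B_{2\sqrt n\,3^{k-1}}$ survive each step. The universal bound $M_0\geq 3^\s$ is essential in the final transfer step so that the exponent in $M_0^k$ does not inflate across iterations, keeping the measure constant $\m_1$ uniform in $k$.
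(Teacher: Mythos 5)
Your overall strategy — induction on $k$ using the rescaling $(x,t)\mapsto(3y,3^\s s+\t)$, the telescoping identity $d_k-1=3^\s d_{k-1}$, and Corollary~\ref{Basetau} — is exactly the paper's intent (the paper's proof is the single line ``apply Corollary~\ref{Basetau}, rescaled, $k$ times''). However, there is a concrete off-by-one gap in your inductive step that you have not noticed. After invoking the inductive hypothesis for $v=u(3\cdot,3^\s\cdot+\t)/M_0$ you obtain $|\{v>M_0^{k-1}\}\cap Q_1\times[0,\t]|\leq\m_1|\cdot|$, which upon pulling back reads
\[
|\{u>M_0^{k}\}\cap Q_3\times[\t,(3^\s+1)\t]|\leq\m_1|Q_3\times[\t,(3^\s+1)\t]|,
\]
and hence a point in $Q_3\times[\t,(3^\s+1)\t]$ where $u\leq M_0^k$. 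Your ``final application of Corollary~\ref{Basetau} to $u/M_0^k$'' then delivers
\[
|\{u>M_0^{k+1}\}\cap Q_1\times[0,\t]|\leq\m_1|Q_1\times[0,\t]|,
\]
with threshold $M_0^{k+1}$, not $M_0^{k}$. Since $\{u>M_0^{k+1}\}\subseteq\{u>M_0^{k}\}$, the bound you obtain is strictly weaker than the one claimed, and the phrase ``at the correct exponent'' is false. Unrolling your recursion shows the exponent you actually prove is roughly $2k-1$, not $k$: each inductive step costs one factor of $M_0$ from the normalization of $v$ \emph{and} one from the trailing Corollary~\ref{Basetau}, but the level only decreases by one.

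The gap stems from combining the inductive hypothesis (which lands you on the shifted cylinder $Q_3\times[\t,(3^\s+1)\t]$) with an extra Corollary~\ref{Basetau} to come back to $Q_1\times[0,\t]$. The cleaner bookkeeping, matching ``apply $k$ times,'' is to locate the shell $Q_{3^j}\times[d_j\t,d_{j+1}\t]$ (with $j\leq k-1$) that carries the infimum and descend inward one shell per application of Corollary~\ref{Basetau}; after $j$ applications you reach $Q_1\times[0,\t]$ with a threshold $\leq C^{j}\leq C^{k}$ for a universal $C$. Even this route does not produce the literal constant $M_0$ of Corollary~\ref{Basetau}: each rescaling of the right-hand side by the factor $3^\s>1$ must be absorbed, so the threshold that actually comes out is of the form $(3^\s M_0)^k$ rather than $M_0^k$. (Your observation that one may ``choose $M_0$ universally large enough that $M_0\geq3^\s$'' points in the right direction, but enlarging $M_0$ in the base Lemma~\ref{base} also enlarges the target $M_0^k$ by the same amount, so it does not by itself close the gap; what one really does is rename the constant in Corollary~\ref{Iteration} to a universal multiple of $M_0$, which is harmless for the downstream Calder\'on--Zygmund argument.) You should either restate the corollary with an enlarged universal constant in place of $M_0$, or restructure the induction on the shell index $j$ as above, so that there is no trailing extra application of Corollary~\ref{Basetau}.

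A secondary point: be careful with the shell-matching you flag as a ``technical obstacle.'' Under $x\mapsto 3x$ the cube $Q_{3^{\s i}}$ maps to $Q_{3^{\s i-1}}$, which is not $Q_{3^{\s(i-1)}}$ unless $\s=1$; the bookkeeping only closes if the spatial cubes grow like $Q_{3^i}$ (so the paper's $Q_{3^{\s i}}$ must be read as $Q_{3^{i}}$, consistently with the $Q_3$ in Corollary~\ref{Basetau} and with the parabolic scaling $r\leftrightarrow r^\s$ of the time intervals of length $d_{i+1}-d_i=3^{\s(i+1)}$). You should make that correction explicit rather than leaving it as an unresolved verification.
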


\begin{proof}
Just apply Corollary \ref{Basetau}, rescaled, $k$ times.
\end{proof}

\subsection{A Calder\'on - Zygmund Lemma}

The purpose of a Calder\'on - Zygmund type Lemma is to find a cover of a given set $A$ with dyadic boxes that capture a fraction of $A$ around a given $\m_1 \in (0,1)$.

The boxes are chosen from a dyadic decomposition of $Q_1\times[0,1]$ that almost preserve the scaling of the equation. When $\s$ is either 1, $\log_2 3$ or 2 the decomposition can be made preserving the scaling of the equation by dividing by 2, 3 or 4 in time respectively. The following algorithm for general $\s$ was communicated to us by Luis Caffarelli.

Initially we consider $Q_1\times[0,1]$, split $Q_1$ into $2^n$ congruent cubes and take the $2^n$ possible cartesian products with $[0,1]$ to form the new dyadic boxes. In each step we consider one of the cubes $Q_r(x_0)\times[t_0,t_0+r^\s\t]$ and always divide $Q_r(x_0)$ in $2^n$ congruent cubes; with respect to the time interval we do the following:
\begin{enumerate}
\item If $\t<2$ then we do not subdivide $[t_0,t_0+r^\s\t]$.
\item If $2\leq\t<4$ then we subdivide $[t_0,t_0+r^\s\t]$ in 2 congruent intervals.
\item If $4\leq\t$ then we subdivide $[t_0,t_0+r^\s\t]$ in 4 congruent intervals.
\end{enumerate}
Finally we take the cartesian product to form the new generation of dyadic boxes from $Q_r(x_0)\times[t_0,t_0+r^\s\t]$.

This procedure verifies that if $\t\in[1,8]$, then the boxes that $Q_r(x_0)\times[t_0,t_0+r^\s\t]$ generates have dimensions $r/2$ (in space) and $(r/2)^\s\t'$ (in time) for some $\t'\in[1,8]$. To prove it, just consider each of the cases.

Given two dyadic boxes $K$ and $\tilde K$ we say that $\tilde K$ is a predecessor of $K$ if $K$ is one of the boxes obtained from the decomposition of $\tilde K$.

The following lemma follows as the one in chapter 4 of \cite{CC} with the difference that the Lebesgue decomposition theorem is applied to rectangles of dimensions $\r$ and $\r^\s$ instead of the standard cubes.

\begin{lemma}\label{CZ1}
Let $A \ss Q_1\times[0,1]$ and $\m_1\in(0,1)$, such that $|A| \leq \m_1|Q_1\times[0,1]|$. Then there exists a set of disjoint dyadic boxes $\{K_j\}$ such that:
\begin{enumerate}
\item $|\cup_j K_j \sm A| = 0$,
\item $|A \cap K_j| > \m_1|K_j|$,
\item $|A \cap \tilde K_j| \leq \m_1|\tilde K_j|$.
\end{enumerate}
\end{lemma}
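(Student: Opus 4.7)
The plan is to mimic the standard Calder\'on--Zygmund stopping time construction, adapted to the parabolic dyadic decomposition described just before the statement. We begin with the single box $Q_1\times[0,1]$. Since by hypothesis $|A|\leq \m_1|Q_1\times[0,1]|$, the root box does not satisfy the selection criterion, so we subdivide it using the algorithm. For each newly produced box $K$ we inspect the density $|A\cap K|/|K|$: if it exceeds $\m_1$ we stop and place $K$ in our family $\{K_j\}$; otherwise we subdivide $K$ further and iterate.

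Disjointness of $\{K_j\}$ is immediate from the nested dyadic structure. Property (2) is literally the stopping criterion. For property (3), the predecessor $\tilde K_j$ of any selected box $K_j$ must itself have been subdivided by the algorithm, so it belonged to the ``continue'' case; hence $|A\cap \tilde K_j|\leq\m_1|\tilde K_j|$. The only delicate point is (1): we must show that the set $E=(Q_1\times[0,1])\sm\cup_j K_j$ of points that are subdivided forever has $|E\cap A|=0$.

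For (1), let $\cF_m$ denote the $\s$-algebra generated by the boxes of the $m$-th generation of the decomposition and set $f_m=\mathbb{E}[\chi_A\mid\cF_m]$, which is constant on each $m$-th generation box $K^{(m)}$ with value $|A\cap K^{(m)}|/|K^{(m)}|$. Any point $(x,t)\in E$ lies in an infinite nested chain of boxes $K^{(m)}$, each with $f_m(x,t)\leq\m_1<1$. By the dyadic martingale convergence theorem, $f_m\to\chi_A$ almost everywhere, so on $E\cap A$ we would obtain $1=\lim_m f_m(x,t)\leq\m_1$, a contradiction. Hence $|E\cap A|=0$, which is (1).

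The main obstacle, and the reason the lemma is not quite automatic, is that the parabolic dyadic rectangles here have dimensions $r$ in space and $r^\s \t'$ in time with $\t'\in[1,8]$, so their eccentricity blows up as $r\to 0$ when $\s\neq 1$. This rules out invoking ordinary Lebesgue differentiation over rectangles. The martingale formulation above sidesteps this issue entirely, because convergence of $\mathbb{E}[\chi_A\mid\cF_m]$ along the nested dyadic filtration requires only that the boxes have positive measure and shrink to points, not that they have bounded aspect ratio. Once this observation is made, the rest of the proof is a formality, and one recovers verbatim the structure of the Calder\'on--Zygmund lemma from Chapter 4 of \cite{CC}.
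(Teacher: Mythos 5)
Your proof is correct and follows the same stopping-time construction as the paper for disjointness and properties (2) and (3). The one place where you genuinely diverge is the argument for property (1). The paper handles the unbounded eccentricity of the parabolic dyadic boxes by circumscribing each box $K_i^{(x,t)} = Q_{r_i}\times[t_i,t_i+r_i^\s\t_i]$ with a fixed-shape parabolic cylinder $\bar K_i^{(x,t)} = Q_{\r_i}\times[t_i,t_i+\r_i^\s]$ where $\r_i = r_i\t_i^{1/\s}$, absorbing the worsened density into a new constant $\bar\m_1 < 1$, and then invoking a version of the Lebesgue differentiation theorem valid for such parabolic cylinders (referencing Exercise 3, Chapter 7 of \cite{WZ}). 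You instead apply the dyadic martingale convergence theorem to $f_m = \mathbb{E}[\chi_A\mid\cF_m]$, which needs only that the atoms of the filtration shrink to points (which they do, since both the spatial side $2^{-m}$ and the temporal side $(2^{-m})^\s\t'$, $\t'\in[1,8]$, tend to zero). This bypasses the aspect-ratio issue cleanly and avoids the padding argument altogether; it is a nicer route to the same conclusion and arguably more transparent. One small point worth being explicit about: the martingale argument requires $\cF_m$ to come from the full $m$-th generation dyadic decomposition (subdividing all boxes, with the stopping time only used to mark selected boxes), not from the stopped filtration, so that $\cF_\8$ generates the Borel $\s$-algebra up to null sets; your phrasing is consistent with this but it deserves a word. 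Finally, note that what you actually prove for (1) is $|A\setminus\cup_j K_j| = 0$, whereas the lemma as printed reads $|\cup_j K_j\setminus A| = 0$; the printed version is a typo (the density bound $|A\cap K_j| > \m_1|K_j|$ with $\m_1<1$ could never force $K_j$ to be essentially contained in $A$, and the paper's own proof and the later use of the lemma as a ``covering of $A$'' confirm the intended form), so your conclusion is the correct one.
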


\begin{proof}
Starting with $Q_1\times[0,1]$, we subdivide the dyadic boxes (with the previous algorithm) that capture a fraction of $A$ smaller or equal to $\m_1$ and select those boxes $\{K_j\}$ that capture a fraction bigger than $\m_1$. Initially $Q_1\times[0,1]$ captures a fraction of $A$ smaller or equal to $\m_1$, therefore we know that $Q_1\times[0,1]$ is subdivided and $\tilde K_j \ss Q_1\times[0,1]$.

This process selects a family of disjoint boxes $\{K_j\}$ that satisfy 2 and 3.

To verify 1 we use the Lebesgue differentiation theorem. For each $(x,t) \in \cup_j K_j \sm A$ there exist a family of dyadic boxes $\{K_i^{(x,t)} = Q_{r_i}(x_j)\times[t_i,t_i+r_i^\s\t_i]\}_{i\geq1}$ such that,
\begin{enumerate}
\item $(x,t) \in K_i^{(x,t)}$,
\item $r_i \to 0$ as $i\to\8$ and $\t_i\in[0,8]$,
\item $|A \cap K_i^{(x,t)}| \leq \m_1|K_i^{(x,t)}|$.
\end{enumerate}
From $K_i^{(x,t)} = Q_{r_i}(x_j)\times[t_i,t_i+r^\s\t]$ we construct a box with a scale $\s$, $\bar K_i^{(x,t)} = Q_{\r_i}(x_j)\times[t_i,t_i+\r_i^\s] \supseteq K_i^{(x,t)}$ such that $\r_i = r_i\t_i^{1/\s}$. They satisfy instead,
\begin{enumerate}
\item $(x,t) \in \bar K_i^{(x,t)}$,
\item $\r_i \to 0$ as $i\to\8$,
\item $|A \cap \bar K_i^{(x,t)}| \leq \bar\m_1|\bar K_i^{(x,t)}|$ with $\bar\m_1 = \frac{(8^{\s_0}-1) + \m_0}{8^{\s_0}}<1$.
\end{enumerate}
Then we can apply a modified version of the Lebesgue differentiation theorem to conclude that $|\cup_j K_j \sm A| = 0$. See for instance Exercise 3 in Chapter 7 of \cite{WZ}.
\end{proof}

This lemma however can not be applied in our situation directly. The results of the previous section say that if $u$ goes below 1 in some region in the future then we can control the distribution in the past; but the predecessor $\tilde K_j$ might give no information of what happens with $u$ in the future. For this reason we need to consider shifts in time of $\tilde K_j$. For a given cube $K = Q \times [t_0,t_0+r]$ and a natural number $m\geq1$ let $K^m = Q\times[t_0+r,t_0+(m+1)r]$.

The following lemma is proven as in section 3 of \cite{W}.

\begin{lemma}\label{CZ}
Let $A \ss Q_1\times[0,1]$ and $\m_1\in(0,1)$, such that $|A| \leq \m_1|Q_1\times[0,1]|$. Then there exists a set of disjoint dyadic boxes $\{K_j\}$ such that:
\begin{enumerate}
\item $|\cup_j K_j \sm A| = 0$,
\item $|A \cap K_j| > \m_1|K_j|$,
\item $|A| \leq \frac{(m+1)\m_1}{m}|\cup_j (\tilde K_j)^m|$.
\end{enumerate}
\end{lemma}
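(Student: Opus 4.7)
The first two conclusions follow immediately from Lemma \ref{CZ1} applied to $A$: the stopping-time decomposition used there produces the required disjoint family $\{K_j\}$, and in its proof one establishes the non-selection estimate
\begin{align*}
|A \cap \tilde K_j| \leq \m_1 |\tilde K_j|
\end{align*}
on each dyadic predecessor, which will be the crucial input for (3). Property (1) is the a.e.-covering statement and (2) is the selection criterion, both carried over verbatim.

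For (3) the idea is to transfer the density estimate from the predecessors $\tilde K_j$ to their forward time-shifts $(\tilde K_j)^m$, since it is the shifts that will eventually be fed into Corollary \ref{Iteration} to extract future information about $u$. First, grouping selected children by common predecessors (selected children of the same $\tilde K$ are dyadic siblings, hence disjoint) one has
\begin{align*}
|A| \;=\; \sum_j |A \cap K_j| \;\leq\; \sum_{\text{distinct } \tilde K} |A \cap \tilde K| \;\leq\; \m_1 \sum_{\text{distinct } \tilde K}|\tilde K|.
\end{align*}
By definition $|(\tilde K)^m| = m|\tilde K|$, so this rewrites as
\begin{align*}
|A| \;\leq\; \frac{\m_1}{m}\sum_{\text{distinct } \tilde K}|(\tilde K)^m|.
\end{align*}

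The final step is a covering estimate passing from the sum to the measure of the union, with loss controlled by $m+1$:
\begin{align*}
\sum_{\text{distinct } \tilde K}|(\tilde K)^m| \;\leq\; (m+1)\Bigl|\bigcup_j (\tilde K_j)^m\Bigr|.
\end{align*}
This is carried out as in section 3 of \cite{W}, using the nested dyadic-tree structure of the predecessors together with a Vitali-type extraction in time-slices to bound the multiplicity of the forward-shift family by $m+1$. Combining the two displays gives exactly the stated inequality $|A|\leq\frac{(m+1)\m_1}{m}|\bigcup_j(\tilde K_j)^m|$.

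The main technical obstacle is this last bounded-overlap estimate. The difficulty is that distinct predecessors at different dyadic scales are either disjoint or nested, but their forward shifts $(\tilde K)^m$ need be neither, and two deeply nested predecessors can produce shifts that genuinely overlap. Controlling the overlap precisely to a factor of $m+1$ requires exploiting both the dyadic hierarchy of the $\tilde K_j$'s and the explicit shape of the shift (same spatial cube, time interval adjacent and of length $m$ times the predecessor's). A minor side issue is that a forward shift may poke beyond the time-slab $[0,1]$, but this is harmless after trivially extending the setup in time.
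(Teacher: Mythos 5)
Your decomposition of the argument into Lemma \ref{CZ1} plus a covering estimate is sensible, and the initial manipulations (grouping selected children by common predecessor and using $|(\tilde K)^m| = m|\tilde K|$) are correct. The gap is in the final, crucial step: the claimed estimate
\begin{align*}
\sum_{\text{distinct }\tilde K}|(\tilde K)^m| \leq (m+1)\Bigl|\bigcup_j(\tilde K_j)^m\Bigr|
\end{align*}
is asserted rather than proven, and the mechanism you propose for it, bounding the multiplicity of the forward-shift family by $m+1$, is not correct. At a single dyadic scale the multiplicity is indeed at most $m+1$, but shifts coming from different scales stack. For instance, if $\tilde K_k$ has time interval $[t^*-2^{-k},\,t^*]$ for each scale $k$, then every shift $(\tilde K_k)^m$ has time interval starting at $t^*$ and so each contains any point slightly past $t^*$; the multiplicity grows with the number of scales rather than being bounded by $m+1$. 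So the "bounded overlap" picture does not hold, and the estimate, whether or not it is ultimately true, is not established by your argument.

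The paper (following Wang) sidesteps multiplicity entirely. Because the dyadic predecessors $\{\tilde K_j\}$ are pairwise nested or disjoint, one may pass to a disjoint subfamily that still covers $A$ in measure. Then the sum is a union: $|A|\leq\sum_j|A\cap\tilde K_j|\leq\m_1|\cup_j\tilde K_j|\leq\m_1|\cup_j(\tilde K_j\cup(\tilde K_j)^m)|$. The only thing left is the purely set-theoretic inequality $|\cup_j(\tilde K_j\cup(\tilde K_j)^m)|\leq\frac{m+1}{m}|\cup_j(\tilde K_j)^m|$, proven by taking time slices, decomposing each slice of $\cup_j(\tilde K_j)^m$ into disjoint open intervals, and extending each backward in time by a $1/m$ fraction of its length; Fubini then gives the factor $\frac{m+1}{m}$. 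Since this is a statement about unions, overlapping shifts are never counted more than once, and the issue you flagged never arises. You should replace your missing overlap estimate with this disjoint-subcover plus backward-time-extension argument.
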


\begin{proof}
Select the same covering $\{K_j\}$ of $A$ from Lemma \ref{CZ1}. Consider a disjoint sub covering of $\{\tilde K_j\}$ which also covers $A$ in measure (denoted by the same). Then
\begin{align*}
|A| \leq \sum_j|A \cap \tilde K_j| \leq \m_1|\cup_j \tilde K_j| \leq \m_1|\cup_j (\tilde K_j \cup (\tilde K_j)^m)|.
\end{align*}

Now we show that
\begin{align}\label{czeq1}
|\cup_j (\tilde K_j \cup (\tilde K_j)^m)| \leq \frac{m+1}{m}|\cup_j (\tilde K_j)^m|.
\end{align}
Let $E$ be the interior of $\cup_j (\tilde K_j)^m$ and consider the open bounded sets of the real line $E_x = \{t\in\R: (x,t) \in E\}$. Take the decomposition of $E_x$ into a countable set of disjoint open intervals $E_x = \cup_j I_x^j$ and for every $I_x^j = (a,b)$ take $TI_x^j = (a-\frac{1}{m+1}(b-a),b)$. Define also $TE_x = \cup_j TI_x^j$ and $TE = \cup_x TE_x$. By Fubini,
\begin{align*}
|TE| &= \int_{\R^n} |TE_x|,\\
&= \int_{\R^n} |\cup_j TE_x^j|,\\
&\leq \frac{m+1}{m}\int_{\R^n} |E_x^j|,\\
&\leq \int_{\R^n} \frac{m+1}{m}|E_x|,\\
&= \frac{m+1}{m}|E|,\\
&\leq \frac{m+1}{m}|\cup_j (\tilde K_j)^m|.
\end{align*}
On the other hand, the interior of $\cup_j (\tilde K_j \cup (\tilde K_j)^m)$ is contained in $TE$, by the definition of $T$ and the definition of the stacks. But $\cup_j (\tilde K_j \cup (\tilde K_j)^m)$ and its interior have the same measure and therefore we obtain \eqref{czeq1} and conclude the proof.
\end{proof}

By combining Corollary \ref{Iteration} with Lemma \ref{CZ} we get the following result.

\begin{lemma}\label{stack}
Let $\s_0 \in (0,2)$, $\s \in (\s_0,2)$, $m \geq m_0$ and $u$ a function such that
\begin{align*}
u_t-\cM^-_{\cL_0(\s)}u &\geq -1 \text{ in } B_{2\sqrt n 3^m}\times(0,d_m],\\
u &\geq 0 \text{ in } \R^n\times[0,d_m]
\end{align*}
Then for any $i \geq 1$, the dyadic covering $\{K_j\}$ of $A = \3u > M_1^{i+1}\4\cap (Q_1 \times [0,1])$ with respect to the fraction $\m_1$ satisfies
\begin{align*}
\cup_j (\tilde K_j)^m \ss \3u > M_1^i\4\cap (Q_1 \times [0,d_m]),
\end{align*}
for $m_0$ universal, $M_1 = M_0^m$, and $\m_1 \in (0,1)$, $M_0>1$, $d_m$ as in Corollary \ref{Iteration}.
\end{lemma}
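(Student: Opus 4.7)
The plan is to argue by contradiction on one selected cube $K_{j_0}$ at a time. Suppose there exists $(x_0,t_0)\in(\tilde K_{j_0})^m$ with $u(x_0,t_0)\leq M_1^i$. The goal is to contradict property 2 of Lemma \ref{CZ} by deriving $|\{u>M_1^{i+1}\}\cap K_{j_0}|\leq \m_1|K_{j_0}|$. Taking this over all $j$ then gives the claimed inclusion.

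Writing $K_{j_0}=Q_r(\bar x)\times[\bar t,\bar t+r^\s\t]$ with $\t\in[1,8]$, consider the rescaled function
\[
v(y,s):=\frac{u(\bar x+ry,\bar t+r^\s s)}{M_1^i}.
\]
Since $\cM^-_{\cL_0(\s)}$ is positively homogeneous of order $\s$ and $u\geq 0$, one has $v\geq 0$ and $v_s-\cM^-_{\cL_0(\s)}v\geq -r^\s/M_1^i\geq -1$ on the rescaled cylinder $B_{2\sqrt n\,3^m}(0)\times(0,d_m\t]$. The inclusion of this cylinder inside the original domain $B_{2\sqrt n\,3^m}\times(0,d_m]$ of the hypotheses is ensured by $r\leq 1/2$, $\bar x\in Q_1$, $\bar t\in[0,1]$, and by taking $m$ larger than a universal threshold.

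The key covering check is that, for $m\geq m_0(n,\s_0)$, the rescaled image of $(\tilde K_{j_0})^m$ is contained in the union of iteration cones $\bigcup_{k=0}^{m-1} Q_{3^{\s k}}\times[d_k\t,d_{k+1}\t]$ of Corollary \ref{Iteration}. Indeed, $\tilde K_{j_0}$ has spatial side $2r$, with center within distance $\sqrt n\,r/2$ of $\bar x$, and time length one, two, or four times that of $K_{j_0}$ depending on the three possible time-subdivision cases of the dyadic algorithm. In rescaled coordinates the stack is therefore contained in a universal rectangle $Q_C(0)\times[a\t,4(m+1)\t]$ with $a\in\{1,2,4\}$ and $C=C(n)$, which for $m\geq m_0$ fits inside the above union uniformly in $\s\in(\s_0,2)$: the intervals $[d_k\t,d_{k+1}\t]$ sweep consecutively through $[\t,d_m\t]$ while the side $3^{\s k}$ grows past $C$ after a finite number of steps depending only on $\s_0$. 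In particular the image $(y_0,s_0)$ of $(x_0,t_0)$ lies in one of these cones with $v(y_0,s_0)\leq 1$, providing the infimum hypothesis of Corollary \ref{Iteration}.

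Applying Corollary \ref{Iteration} at level $k=m$ with this $\t$ yields
\[
|\{v>M_0^m\}\cap Q_1\times[0,\t]|\leq \m_1|Q_1\times[0,\t]|,
\]
which, unwinding the rescaling, is exactly $|\{u>M_1^{i+1}\}\cap K_{j_0}|\leq \m_1|K_{j_0}|$, the desired contradiction. The main technical obstacle is the covering step: one must verify, uniformly in $\s\in(\s_0,2)$ and across the three possible time-subdivision configurations of the dyadic algorithm, that $(\tilde K_{j_0})^m$ fits inside the iteration cones, and this is precisely where the universal lower bound $m_0$ on the stack height is used and why the cones of Corollary \ref{Iteration} have their particular cylindrical shape.
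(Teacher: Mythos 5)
Your proof is correct and follows essentially the same route as the paper's: argue by contradiction on a selected box, rescale $u$ by $M_1^i$ on $K_{j_0}$ to bring it to $Q_1\times[0,\t]$, verify that the stack $(\tilde K_{j_0})^m$ lands inside the iteration cones of Corollary \ref{Iteration} for $m\geq m_0$, and then contradict property 2 of Lemma \ref{CZ}. The only difference is cosmetic — you spell out the three time-subdivision cases of the dyadic algorithm more explicitly, whereas the paper compresses this into ``if $m$ is large enough''.
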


\begin{proof}
Notice first that $M_1 = M_0^m$ implies by Corollary \ref{Iteration} that
\begin{align*}
|A| \leq |\{u > M_0^m\}\cap Q_1\times[0,1]| \leq \m_1|Q_1\times[0,1]|,
\end{align*}
which allows us to apply Lemma \ref{CZ} to $A$.

By the construction of $(\tilde K_j)^m$ we know that $\cup_j (\tilde K_j)^m \ss Q_1 \times [0,d_m]$ for $m$ large enough. So we can assume by contradiction that there exists some $K_j=Q_r(x_0)\times[t_0,t_0+r^\s\t]$ such that
\begin{enumerate}
\item $\inf_{(\tilde K_j)^m} u \leq 1$,
\item $|A \cap K_j|>\m_1|K_j|$.
\end{enumerate}

The rescaling $(x,t) \to (r^{-1}(x-x_0),r^{-\s}(t-t_0))$ sends $K_j$ to $Q_1\times[0,\t]$; the stack $(\tilde K_j)^m$ to a subset of $\cup_{i=1}^m Q_{3^\s i}\times[d_i\t,d_{i+1}\t]$ if $m$ is large enough (at this moment we can fix $m_0$); and it sends the domain of the equation $B_{2\sqrt n 3^m}\times(0,d_m\t]$ to a even bigger domain.

The function $v(x,t) = \frac{u(rx+x_0,r^\s t+t_0)}{M_1^i}$ satisfies
\begin{align*}
v_t - \cM^-_{\cL_0}v &\geq -\frac{r^\s}{M_1^i} \geq -1 \text{ in } B_{2\sqrt n 3^m}\times(0,d_m\t],\\
v &\geq 0 \text{ in } \R^n\times(0,d_m\t],\\
\inf_{\cup_{i=1}^{m-1} Q_{3^\s i}\times[d_i\t,d_{i+1}\t]}v &\leq 1.
\end{align*}
By Corollary \ref{Iteration} we get,
\begin{align*}
\frac{\left|A \cap K_j\right|}{|K_j|} \leq \frac{\left|\3v > M_0^m\4\cap Q_1 \times [0,1]\right|}{|Q_1 \times [0,1]|} \leq \m_1.
\end{align*}
But this contradicts the construction of $K_j$.
\end{proof}


\subsection{Proof of Theorem \ref{PE}}

By combining the previous results we get the following discrete version of Theorem \ref{PE} which implies the proof of Theorem \ref{PE} by rescaling and a covering argument.

\begin{lemma}\label{Discrete}
Let $\s_0 \in (0,2)$, $\s\in(\s_0,2)$ and $u$ a function such that
\begin{align*}
u_t-\cM^-_{\cL_0}u &\geq -1 \text{ in } B_{2\sqrt n 3^m}\times(0,d_m],\\
u &\geq 0 \text{ in } \R^n\times[0,d_m],\\
\inf_{Q_1\times[C_0-1,C_0]}u &\leq 1,
\end{align*}
then for any $k \in \N$
\begin{align*}
 |\{u > M_2^k\}\cap Q_1\times[0,1]| \leq \m_3^k|Q_1\times[0,1]|,
\end{align*}
for some universal constants $M_2>1$, $\m_3\in(0,1)$, $C_0$, $m$; and $d_m$ as in Corollary \ref{Iteration}.
\end{lemma}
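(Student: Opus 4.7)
The plan is to prove Lemma \ref{Discrete} by induction on $k$, combining Corollary \ref{Iteration} as the base case with the Calder\'on--Zygmund iteration built from Lemma \ref{stack} and Lemma \ref{CZ} for the inductive step.

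For the base case $k=1$, I would choose the universal constant $C_0$ so that the box $Q_1\times[C_0-1,C_0]$ sits inside one of the regions $Q_{3^{\s i}}\times[d_i,d_{i+1}]$ appearing in the hypothesis of Corollary \ref{Iteration}, for some $1\leq i\leq m-1$. The assumption $\inf_{Q_1\times[C_0-1,C_0]} u\leq 1$ then upgrades to that corollary's infimum hypothesis (with $\t=1$), and we obtain $|\{u>M_0^m\}\cap Q_1\times[0,1]|\leq \m_1|Q_1\times[0,1]|$. Setting $M_2:=M_0^m$ and $\m_3\geq\m_1$ handles this case.

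For the inductive step, let $A_{k+1}=\{u>M_2^{k+1}\}\cap Q_1\times[0,1]$. Since $|A_{k+1}|\leq|A_1|\leq\m_1|Q_1\times[0,1]|$, Lemma \ref{stack} applies with $i=k$ and produces a dyadic cover $\{K_j\}$ of $A_{k+1}$ with the stack property $\bigcup_j(\tilde K_j)^m\subseteq\{u>M_2^k\}\cap Q_1\times[0,d_m]$. Coupling this with the measure estimate of Lemma \ref{CZ} yields
\[
|A_{k+1}|\leq \frac{(m+1)\m_1}{m}\,\big|\{u>M_2^k\}\cap Q_1\times[0,d_m]\big|.
\]
I would then bound the right-hand side by decomposing $[0,d_m]$ into $\lceil d_m\rceil$ unit time intervals $[j,j+1]$ and applying the inductive hypothesis on each, after translating $u$ in time, to obtain $|\{u>M_2^k\}\cap Q_1\times[0,d_m]|\leq \lceil d_m\rceil \m_3^k\,|Q_1\times[0,1]|$. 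Choosing the universal constants so that $\frac{(m+1)\m_1\lceil d_m\rceil}{m}\leq\m_3<1$ (adjusting $M_2$ upward to absorb any leftover multiplicative constant) closes the induction.

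The principal obstacle is the translation step, because the hypotheses of Lemma \ref{Discrete} (equation and nonnegativity on $[0,d_m]$ and the infimum condition at $[C_0-1,C_0]$) are not automatically preserved under the time shifts $t\mapsto t+j$. The cleanest remedy is to prove a slightly stronger intermediate statement by induction --- with the equation and nonnegativity posed on a larger time window $[0,T]$ for $T$ big enough that every needed translate fits inside it --- and then specialize back to the stated hypotheses at the end. Threading the dependence on $\s_0$ consistently through all the constants, so that none of $M_2$, $\m_3$, or $C_0$ degenerates as $\s\to 2$, is the remaining technical point to verify carefully.
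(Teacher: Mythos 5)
Your base case, the invocation of Lemma \ref{CZ} and Lemma \ref{stack}, and the chain of inequalities up to
\[
|A_{k+1}|\leq \frac{(m+1)\m_1}{m}\,\big|\{u>M_2^k\}\cap Q_1\times[0,d_m]\big|
\]
all match the paper's setup. The gap opens at the step where you try to close the induction by splitting $[0,d_m]$ into unit time blocks, time-shifting $u$, and reapplying the inductive hypothesis on each block. This cannot work as proposed, and the remedy you sketch (proving the same statement on a wider time window) does not fix it, for two separate reasons. First, the hypothesis $\inf_{Q_1\times[C_0-1,C_0]}u\leq 1$ is anchored to a single fixed future window: after the shift $t\mapsto t+j$ you would need $\inf_{Q_1\times[C_0-1+j,\,C_0+j]}u\leq 1$, and nothing in the supersolution inequality $u_t-\cM^-_{\cL_0}u\geq-1$ forces $u$ ever to come back down near $1$ at later times. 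Enlarging the window on which the equation and the nonnegativity hold does not supply this missing infimum information at each shifted window. Second, even if the translation were legitimate, the constants refuse to close: you would need $\frac{(m+1)\m_1}{m}\lceil d_m\rceil\leq\m_3<1$, but $\m_1=\frac{7+\m_0}{8}>7/8$ by Corollary \ref{Basetau} and $d_m\geq 2$ already for $m\geq 1$, so $\m_2\lceil d_m\rceil>1$; and since the factor $\lceil d_m\rceil$ enters multiplicatively in the measure bound, not in the level $M_2$, increasing $M_2$ cannot absorb it.

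The paper closes the induction differently, by contradiction. Assume $|A_{k+1}|>\m_3^{k+1}$. After your first chain of inequalities, split $\cup_j(\tilde K_j)^m$ into the part in $Q_1\times[0,1]$, which the inductive hypothesis bounds by $\m_3^k$, and the part in $Q_1\times[1,d_m]$; with $\m_3=(1+\m_2)/2$ the latter must have measure at least $c\m_3^k$. This forces some dyadic box $K_j=Q_r(x_0)\times[t_0,t_0+r^\s\t]$ of the CZ covering to have $(m+1)$ times its time-height $r^\s\t$ bounded below by $c\m_3^k/4$, i.e.\ it is a relatively \emph{large} box, while still satisfying $|A_{k+1}\cap K_j|>\m_1|K_j|$. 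The point is then purely geometric: with $C_0=\frac{8(6\sqrt n)^{\s_0}}{3^{\s_0}-1}+2$, a box this large has the property that a stack of $Nk$ rescaled iterations (with $N$ universal, independent of $k$, $\s$, $\t$) based at $K_j$ covers $Q_1\times[C_0-1,C_0]$, which is precisely where the original infimum hypothesis lives. Applying the rescaled Corollary \ref{Iteration} to $u/M_2^k$ on that box (using $M_2\geq M_0^N$) yields $|A_{k+1}\cap K_j|\leq\m_1|K_j|$, contradicting the CZ selection of $K_j$. So the essential idea you are missing is not a new lemma but the switch to a contradiction argument that locates a large high-time box in the covering and reaches the fixed window $Q_1\times[C_0-1,C_0]$ via the geometry of the dyadic stacks, rather than trying to propagate the infimum hypothesis forward in time.
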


\begin{proof}
Let $C_0=\frac{8(6\sqrt n)^{\s_0}}{3^{\s_0}-1}+2$.

We proceed by induction. The case $k=1$ is the result of Corollary \ref{Iteration} with $\t=1$ if $C_0 \leq d_m$, $M_2 \geq M_0^m$ and $\m_3 \in (\m_0,1)$. Assume then it is true for some $k$ and lets prove it for $k+1$. It means that we are assuming that
\begin{align*}
|\{u>M_2^{k+1}\} \cap Q_1\times[0,1]| > \m_3^{k+1} \geq \m_3|\{u>M_2^k\}\cap Q_1\times[0,1]|.
\end{align*}

Let $\{K_j\}$ the dyadic disjoint covering of $A=\{u>M_2^{k+1}\} \cap Q_1\times[0,1]$ with respect to the fraction $\m_1$. Lets fix $m$ sufficiently large such that:
\begin{enumerate}
\item $\m_2 := \frac{(m+1)\m_1}{m} < 1$,
\item $d_m\geq C_0$,
\item $m \geq m_0$ with $m_0$ from Lemma \ref{stack}.
\end{enumerate}

Lemma \ref{CZ} and \ref{stack} tell us that for every piece $K_j$ of the covering,
\begin{align*}
|\{u>M_2^{k+1}\} \cap K_j| &> \m_1|K_j|,\\
|\{u>M_2^{k+1}\} \cap Q_1\times[0,1]| &\leq \m_2|\cup_j (\tilde K_j)^m|,\\
\cup_j (\tilde K_j)^m &\ss \{u>M_2^k\}\cap Q_1\times[0,d_m].
\end{align*}
Therefore,
\begin{align*}
\m_3^{k+1} &< |\{u>M_2^{k+1}\} \cap Q_1\times[0,1]|,\\
&\leq \m_2|\cup_j (\tilde K_j)^m|,\\
&= \m_2(|\cup_j (\tilde K_j)^m \cap Q_1\times[0,1]| + |\cup_j (\tilde K_j)^m\cap Q_1\times[1,d_m]|),\\
&\leq \m_2(|\{u>M_2^i\}\cap Q_1\times[0,1]| + |\cup_j (\tilde K_j)^m\cap Q_1\times[1,d_m]|),\\
&\leq \m_2(\m_3^k + |\cup_j (\tilde K_j)^m\cap Q_1\times[1,d_m]|).
\end{align*}
By fixing $\m_3 = (1+\m_2)/2 \in (0,1)$ we get for some constant $c$,
\begin{align*}
c\m_3^k \leq |\cup_j (\tilde K_j)^m\cap Q_1\times[1,d_m]|.
\end{align*}
Therefore there is a point $(x,t) \in \cup_j (\tilde K_j)^m$ with $t\geq c\m_3^k+1$. Hence there is a dyadic box $Q_r(x_0)\times[t_0,t_0+r^\s\t]$ in the covering with
\begin{enumerate}
\item[a.-] $4(m+1)r^\s\t \geq c\m_3^k$,
\item[b.-] $|\{u>M_2^{k+1}\} \cap Q_r(x_0)\times[t_0,t_0+r^\s\t]| > \m_1|Q_r(x_0)\times[t_0,t_0+r^\s\t]|$,
\end{enumerate}
The first equation implies $d_{Nk}r^\s\t \geq C_0$ for $N$ large enough, independent of $k$, $\s$ and $\t$.

In order to apply Corollary \ref{Iteration} (rescaled) we need to check at least that
\begin{align}\label{stackeq}
Q_1\times[C_0-1,C_0] \ss \cup_{i=1}^{k-1} Q_{3^{\s i}r}(x_0)\times\left[t_0 + d_ir^\s\t,t_0 + d_{i+1}r^\s\t\right]
\end{align}
By having chosen $C_0-2=\frac{8(6\sqrt n)^{\s_0}}{3^{\s_0}-1}$ we get that for $(x_0,t_0) \in Q_1\times[0,1]$ and $(x,t) \in Q_1\times[C_0-1,C_0]$, $t-t_0 \geq \frac{\t(6\sqrt n)^{\s_0}}{3^{\s_0}-1}$ and $|x-x_0| \leq \sqrt n$. Therefore
\begin{align*}
Q_1\times[C_0-1,C_0] \ss \{\t \leq t-t_0 \leq \t|6(x-x_0)|^\s/(3^\s-1)\}.
\end{align*}
Using the previous relation and that $d_{Nk}r^\s\t \geq C_0$ we verify \eqref{stackeq}. It allows us then to conclude that $|\{u>M_0^{Nk}\} \cap Q_r(x_0)\times[t_0,t_0+r^\s\t]| \leq \m_1|Q_r(x_0)\times[t_0,t_0+r^\s\t]|$ which would contradict
\begin{align*}
|\{u>M_2^{k+1}\} \cap Q_r(x_0)\times[t_0,t_0+r^\s\t]| > \m_1|Q_r(x_0)\times[t_0,t_0+r^\s\t]|
\end{align*}
if $M_2 \geq M_0^N$.
\end{proof}



\section{Regularity Results}\label{RR}
The purpose of this section is to prove that solutions of \eqref{eqbase} are regular.

H\"older regularity follows by proving a geometric decay of the oscillation of the solution. H\"older regularity for the spatial gradient holds if the equation is translation invariant and the operator is elliptic with respect to $\cL_1$.

H\"older regularity for the time derivative does not hold even for the fractional heat equation in a bounded domain if the boundary data is not sufficiently nice. Consider $u$, the solution of $u_{t^-}+(-\D)^\s u = 0$ in $B_1\times(-1,0]$, with initial value $u(\cdot,-1) \equiv 0$. The boundary data outside $B_1$ is equal to $\underbar u=(c(t+1/2) + \chi_{B_3 \sm B_2}(x))\chi_{[-1,2,0]}(t)$ where the constant $c>0$ is chosen small enough so that $\underbar u$ is a subsolution to the fractional heat equation in $B_1\times(-1,0]$. By the comparison principle, we have that $u \geq \underbar u$ in $B_1\times(-1,0]$. Also $u \equiv 0$ in $B_1\times[-1,-1/2]$ by uniqueness. This shows that the time derivative of $u$ have a jump at $t=-1/2$. This problem will be treated in a future work.



\begin{theorem}[H\"older regularity]\label{Calpha}
Let $\s_0 \in(0,2)$ and $\s\in(\s_0,2)$. Let $u\in C(\W\times(-1,0])$ such that it satisfies the following two inequalities in the viscosity sense with $C_0 \geq 0$,
\begin{align*}
u_t - \cM_{\cL_0(\s)}^- u &\geq -C_0 \text{  in $B_2\times(-1,0]$},\\
u_t - \cM_{\cL_0(\s)}^+ u &\leq C_0 \text{  in $B_2\times(-1,0]$}.
\end{align*}
Then there is some $\a \in (0,1)$ and $C>0$, depending only on $n$, $\L$ and $\s_0$, such that for every $(y,s),(x,t) \in B_{1/2}\times(-1/2,0]$
\begin{align*}
\frac{|u(y,s) - u(x,t)|}{(|x-y| + |t-s|^{1/\s})^\a} \leq C\1\|u\|_{L^\8(\bar B_2\times[-1,0])} + \|u\|_{C(-1,0;L^1(\w))} + C_0\2
\end{align*}
\end{theorem}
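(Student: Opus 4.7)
The proof follows the standard oscillation-decay iteration of Krylov--Safonov type adapted to the parabolic nonlocal setting, using Theorem \ref{PE} as the main analytic input. The plan is to combine a one-scale diminish of oscillation lemma with a parabolic rescaling that preserves the class $\cL_0(\s)$.

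\textbf{Step 1 (diminish of oscillation).} I would prove that there exist universal $\rho \in (0, 1/2)$ and $\eta \in (0, 1)$ such that, whenever $u$ satisfies the hypotheses of the theorem with $\|u\|_{L^\8(B_2 \times [-1,0])} \leq 1$ and the tail and forcing terms are small, then $\osc_{B_\rho \times (-\rho^\s, 0]} u \leq 1 - \eta$. After subtracting the midpoint of the range so that $|u| \leq 1/2$ in $B_2 \times (-1, 0]$, one of the sets $\{u(\cdot, t) \geq 0\} \cap B_{1/2}$ or $\{u(\cdot, t) \leq 0\} \cap B_{1/2}$ has measure at least $|B_{1/2}|/2$ for a substantial set of times in $[-1, -1/2]$. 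Replacing $u$ by $-u$ interchanges the two Pucci inequalities and lets me assume the latter. Then $v := u + 1/2$ is a supersolution of $v_t - \cM^-_{\cL_0} v \geq -C_0$ with $v \geq 0$ in $B_2$, and $|\{v \geq 1/2\} \cap B_{1/2} \times [-1, -1/2]|$ is bounded below.

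\textbf{Handling the tail.} To make $v$ globally non-negative so that Theorem \ref{PE} applies, I add a non-negative barrier $\phi$ supported outside $B_2$ that dominates $(u + 1/2)^-$ there. The equation for $w := v + \phi$ picks up an extra forcing bounded by a multiple of $\cM^+_{\cL_0} \phi$, which one estimates uniformly in $\s \in (\s_0, 2)$ by the $C(-1,0; L^1(\w))$ norm of $u$ (this is where the $(2-\s)$ factor in the definition of the extremal operators and the weight $\w = 1/(1+|y|^{n+\s})$ play together). Then Theorem \ref{PE} yields a positive pointwise lower bound $\inf_{B_\rho \times (-\rho^\s, 0]} w \geq c_1 > 0$ provided the normalisation is chosen so that the tail plus $C_0$ is below some universal threshold. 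Since $w = u + 1/2$ on $B_\rho$, we conclude $\inf_{B_\rho \times (-\rho^\s, 0]} u \geq -1/2 + c_1$, which, together with $\sup u \leq 1/2$, gives the oscillation decay.

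\textbf{Step 2 (iteration).} By induction I construct non-decreasing $m_k$ and non-increasing $M_k$ with $M_k - m_k \leq (1-\eta)^k$ and $m_k \leq u \leq M_k$ on $Q_k := B_{\rho^k} \times (-\rho^{k\s}, 0]$. The rescaled function
\[
u_k(y, s) := \frac{u(\rho^k y, \rho^{k\s} s) - (M_k + m_k)/2}{(1-\eta)^k}
\]
satisfies the two-sided Pucci inequalities thanks to the scale invariance of $\cL_0(\s)$ under the parabolic scaling $(y,s) \mapsto (\rho^k y, \rho^{k\s} s)$, with right-hand side $C_0 \rho^{k\s}/(1-\eta)^k$. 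Choosing $\rho$ small enough that $\rho^{\s_0} \leq 1 - \eta$ keeps this bounded. A geometric-series argument, using the polynomial decay of $\w$ together with $\s > \s_0$, shows that $\|u_k\|_{L^\8(B_2 \times [-1,0])}$ and $\|u_k\|_{C(-1,0; L^1(\w))}$ remain under the smallness threshold of Step 1. Hence Step 1 applies to $u_k$ and closes the induction, giving $M_{k+1} - m_{k+1} \leq (1-\eta)(M_k - m_k)$.

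\textbf{Step 3 (Hölder estimate at every point).} Applying the iteration at each $(x_0, t_0) \in B_{1/2} \times (-1/2, 0]$ by translating gives the claimed estimate with $\a = \log(1-\eta)/\log \rho$ and constant $C$ proportional to $\|u\|_{L^\8(\bar B_2 \times [-1,0])} + \|u\|_{C(-1,0;L^1(\w))} + C_0$ after undoing the initial normalisation.

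\textbf{Main obstacle.} The delicate point is Step 1: producing the global non-negative extension $v + \phi$ while keeping the additional forcing $\cM^+_{\cL_0}\phi$ uniformly small as $\s \to 2^-$. This is where the factors $(2-\s)$ in the extremal operators and the particular weight $\w$ must interact cleanly; the rest of the iteration is bookkeeping once the diminish of oscillation is in hand.
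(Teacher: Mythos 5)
Your proposal follows the same overall strategy as the paper (normalize, apply Theorem \ref{PE} at the base scale via a one-sided truncation, then iterate with parabolic rescaling), but there is a genuine gap in Step 2 in the way you control the tail of $u_k$. You claim that $\|u_k\|_{C(-1,0;L^1(\w))}$ ``remains under the smallness threshold of Step 1'' by a geometric-series argument using only $\s>\s_0$. That cannot be right as stated: the contribution of $B_1$ alone to $\|u_k\|_{L^1(\w)}$ is $O(1)$ (since $|u_k|\le 1$ there), and the annular contributions from the previous scales give $\int_{1}^{\rho^{-k}} r^{\a}\,r^{-1-\s}\,dr \sim 1/(\s-\a)$, which is also bounded away from zero --- the convergence ensured by $\s>\s_0>\a$ gives finiteness of the geometric sum, not smallness. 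The quantity that actually needs to be small is $\|(u_k+1)^-\chi_{\R^n\setminus B_1}\|_{L^1(\w)}$, i.e. the negative part of the rescaled, re-centered solution \emph{minus its bound inside $B_1$}; this vanishes inside $B_1$ by the inductive two-sided trap and grows at most like $|x|^{\a}-1$ outside, so its $L^1(\w)$ norm is $O(\a/(\s(\s-\a)))\to 0$ as $\a\to 0$. That ``$-1$'' is precisely what makes the estimate tend to zero and it is absent from your computation.

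A second, related omission is the order of the constant choices. You set $\a=\log(1-\eta)/\log\rho$ with $\rho,\eta$ fixed by Step 1, and then need $\a$ to be small for the tail estimate; but $\a$ is not free once $\rho,\eta$ are fixed. The resolution (used implicitly in the paper) is to weaken the decay: after the point estimate produces some improvement $\theta$, one picks $\a$ small enough that both $(1-\theta/2)\le 2^{-\a}$, $\a<\s_0$, and the tail bound $\|2(|2x|^{\a}-1)\chi_{\R^n\setminus B_1}\|_{L^1(\w)}\le \e_0$ hold simultaneously, and only then defines $m_{k+1},M_{k+1}$. Equivalently one may shrink $\rho$ at fixed $\eta$, which automatically shrinks $\a$. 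The paper also truncates $u$ to $B_2$ at the outset ($v=u\chi_{B_2}$), which makes the far tail of $w$ vanish identically beyond scale $2^k$; your barrier $\phi$ plays a similar role, but it needs to be redefined at every scale using the inductive bound rather than the initial $\|u\|_{L^1(\w)}$, and this is where your writeup conflates the two.
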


\begin{proof}
Assume without loss of generality that $(x,t) = (0,0)$ and also that $u \in [-1/2,1/2]$ in $B_2 \times [-1,0]$. A truncation of $u$ also satisfies a similar equation with a controlled right hand side. Let $v = u\chi_{B_2}$, then
\begin{align*}
v_t - \cM_{\cL_0}^- v &\geq -C_0 - C\|u\|_{L^1(\w)} \text{  in $B_1\times(-1,0]$},\\
v_t - \cM_{\cL_0}^+ v &\leq C_0 + C\|u\|_{L^1(\w)} \text{  in $B_1\times(-1,0]$}.
\end{align*}
Assume without loss of generality that $C_0 + C\|u\|_{C(-1,0;L^1(\w))} \leq \e_0$ for some $\e_0>0$ small enough to be fixed. The idea of the proof is to construct an increasing sequence $m_k$ and a decreasing sequence $M_k$, such that $M_k-m_k=2(1/2)^{\a k}$ and $v$ is trapped between $m_k$ and $M_k$ in $B_{(1/2)^k}\times[-(1/2)^{\s k},0]$.

Take initially $m_0=-1/2$ and $M_0=1/2$. Assume we have constructed the sequences up to some index $k$. We want to find now how to construct $m_{k+1}$ and $M_{k+1}$. Consider
\begin{align*}
w(x,t)=\frac{v((1/2)^kx, (1/2)^{\s k} t)-(m_k+M_k)/2}{(1/2)^{\a k}},
\end{align*}
so that $w \in [-1,1]$ in $B_1 \times [-1,0]$ and satisfies
\begin{align*}
w_t - \cM_{\cL_0}^- w &\geq -\e_02^{-(\s-\a)k} \geq -\e_0 \text{  in $B_1\times(-1,0]$},\\
w_t - \cM_{\cL_0}^+ w &\leq \e_02^{-(\s-\a)k} \leq \e_0 \text{  in $B_1\times(-1,0]$},
\end{align*}
given that $\a \in (0,\s_0)$.

In $B_{1/2}\times[-1/2,0]$, $w$ is between $-1$ and 1 and either is above or below zero at least $1/2$ of the measure of $B_{1/2}\times[-1,-1/2]$. Assume, without loss of generality, that
\begin{align*}
\frac{|\{w > 0\}\cap B_{1/2}\times[-1,-1/2]|}{|B_{1/2}\times[-1,-1/2]|} \geq \frac{1}{2}.
\end{align*}
Then we want to use Theorem \ref{PE} to show that $w+1$ raises from 0 in $B_{1/2}\times[-1/2,0]$. Still $w$ can have negative values outside $B_1$ which will not allow to apply such theorem. However, $w+1 \geq -2(|2x|^\a-1)$ outside $B_1$, so that $w^+$ satisfies in $B_{3/4}\times(-1,0]$,
\begin{align*}
(w+1)^+_t - \cM_{\cL_0}^- (w+1) \geq -\e_0 - \|2(|2x|^\a-1)\chi_{\R^n\sm B_1}\|_{L^1(\w)},
\end{align*}
By choosing $\a$ small enough we can get right hand sides of magnitude $2\e_0$.

By Theorem \ref{PE} we get that,
\begin{align*}
C\1\inf_{B_{1/2}\times[-1/2,0]}w+2\e_0\2\geq \frac{|B_{1/2}\times[-1/2,0]|}{2},
\end{align*}
Which for $\e_0$ small enough implies that $\inf_{B_{1/2}\times[-1/2,0]}w \geq \theta$ for some universal $\theta>0$.

Then in this case we can choose $M_{k+1} = M_k$ and $m_{k+1} = m_k + \theta(M_k-m_k)/2$. Notice that, $M_{k+1} - m_{k+1} = (1-\theta/2)2^{-\a k}$ which can be made smaller than $2^{-\a k}$ for $\a$ even smaller. In the case that 
\begin{align*}
\frac{|\{w > 0\}\cap B_{1/2}\times[-1,-1/2]|}{|B_{1/2}\times[-1,-1/2]|} \geq \frac{1}{2},
\end{align*} 
we arrive to the conclusion that $M_{k+1} = M_k - \theta(M_k-m_k)/2$ and $m_{k+1} = m_k$ satisfies all the inductive hypothesis which concludes the proof.
\end{proof}


\begin{theorem}[$C^{1,\a}$ Regularity for translation invariant operators]
Let $\s_0 \in(0,2)$, $\s\in(\s_0,2)$ and $f\in C([-1,0])$. There is $\r_0$ depending only on $n,\l, \L, \s_0$, such that if $I$ is an inf sup (or sup inf) type operator, translation invariant in space and elliptic with respect to $\cL_1(\s,\L,\r_0)$ and $u \in C(\bar B_1\times[-1,0])$ is a viscosity solution of the equation,
\begin{align*}
u_t - Iu = f(t) \text{ in $B_2\times(-1,0]$},
\end{align*}
then $u$ is $C^{1,\a}$ in space for some universal $\a \in (0,1)$. More precisely, there is a constant $C>0$, depending only on $n$, $\L$ and $\s$, such that for every $(x,t),(y,s) \in B_{1/4}\times(-1,0]$
\begin{align*}
\frac{|u_{x_i}(x,t) - u_{x_i}(y,s)|}{(|x-y| + |t-s|^{1/\s})^\a} &\leq C\1\|u\|_{L^\8(\bar B_2\times[-1,0])} + \|u\|_{C(-1,0;L^1(\w))}\right.\\
&\left. {} + \|f\|_{L^\8((-1,0])}\2, \text{ for $i=1,\ldots,n$}.
\end{align*}
\end{theorem}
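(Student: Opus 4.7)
The strategy is a bootstrap via incremental quotients in space, exploiting translation invariance of $I$. For any $h \in \R^n$, the shifted function $u_h(x,t) := u(x+h,t)$ again solves $u_{h,t} - Iu_h = f(t)$, so by Theorem \ref{Eqdiff} applied to $u_h$ and $u$ (and using that ellipticity with respect to $\cL_1 \subset \cL_0$ implies ellipticity with respect to $\cL_0$), their difference satisfies the extremal inequalities
\begin{align*}
(u_h - u)_t - \cM^+_{\cL_0}(u_h - u) &\leq 0,\\
(u_h - u)_t - \cM^-_{\cL_0}(u_h - u) &\geq 0
\end{align*}
in the viscosity sense.

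First I would apply Theorem \ref{Calpha} directly to $u$, obtaining $u \in C^{\a_0}$ in the parabolic sense for some universal $\a_0 \in (0,\s_0)$. This implies that the rescaled quotient $w_h := (u_h - u)/|h|^{\a_0}$ is uniformly bounded in $h$ on a slightly smaller domain, and by the extremal inequalities above, satisfies the hypotheses of Theorem \ref{Calpha} after a truncation analogous to the $v = u\chi_{B_2}$ trick in the proof of Theorem \ref{Calpha}. Re-applying Theorem \ref{Calpha} to $w_h$ gives $C^{\a_0}$ estimates uniform in $h$, which translates to $u \in C^{2\a_0}$ in space. Iterating, at each step the spatial regularity exponent increases by $\a_0$. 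Once the exponent exceeds the Lipschitz threshold, the first-order quotient $\tilde w_h := (u_h - u)/|h|$ is uniformly bounded, a final application of Theorem \ref{Calpha} gives $\tilde w_h \in C^{\a_0}$ uniformly in $h$, and letting $h \to 0$ in any direction $e$ we conclude that $D_e u$ exists and is $C^{\a_0}$, which is exactly $u \in C^{1,\a_0}$ in space, with the parabolic time scaling $|t-s|^{1/\s}$ inherited from Theorem \ref{Calpha}.

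The reason we need ellipticity with respect to the smaller class $\cL_1(\s,\L,\r_0)$, rather than just $\cL_0$, is that the iteration involves rescaling the equation to smaller balls. Condition \eqref{L1} is the scale-invariant regularity condition on the kernels that guarantees the rescaled equation remains elliptic with respect to an isomorphic class with the same constants, so that Theorem \ref{Calpha} applies uniformly across all scales of the bootstrap. The parameter $\r_0$ is fixed depending only on $n, \L, \s_0$ precisely so that this iteration closes.

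\textbf{Main obstacle.} The delicate point is controlling the $L^1(\w)$-tails of the rescaled increments $w_h$ uniformly as $|h| \to 0$. The Hölder control on a compact set does not by itself give a useful tail bound, and the naive estimate $\|w_h\|_{L^1(\w)} \lesssim |h|^{-\a_0}$ is too weak. The fix is to cut $w_h$ off outside the region where the Hölder estimate is available and absorb the resulting error terms into the $L^1(\w)$ norm of $u$; condition \eqref{L1} is what makes these cutoff error terms controllable uniformly under rescaling. Once this tail control is secured, each step of the iteration — and in particular the final passage $h \to 0$ that extracts the spatial derivative — is essentially routine.
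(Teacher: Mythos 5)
Your proposal matches the paper's proof in all essentials: incremental quotients in a fixed direction, translation invariance plus Theorem \ref{Eqdiff} to get extremal inequalities for the quotients, cutoffs to keep the (truncated) quotients compactly supported, the $\cL_1$ condition \eqref{L1} to bound the commutator error $\cM^\pm_{\cL_1} w_2^{h,k}$ introduced by the cutoff, iterated applications of Theorem \ref{Calpha} raising the exponent by $\bar\a$ each time, and a final pass to the Lipschitz quotient. One small imprecision worth noting: you state the extremal inequalities for $(u_h-u)$ with $\cM^\pm_{\cL_0}$, but to make the cutoff error terms behave you must use the sharper $\cL_1$ extremal operators (as you later implicitly acknowledge when invoking \eqref{L1}); and the cutoff error is absorbed into $\|u\|_{L^\8}$ via the integrated kernel-difference bound \eqref{L1}, rather than into $\|u\|_{L^1(\w)}$ as you phrase it — after the initial truncation $u\chi_{B_2}$ these norms are comparable, so the distinction is harmless here, but the mechanism is the $\cL_1$ kernel regularity, not the weight $\w$.
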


\begin{proof}
Assume without loss of generality that 
\[\|u\|_{L^\8(\bar B_2\times[-1,0])} + \|f\|_{L^\8((-1,0])} \leq 1.
\]
By considering the cutoff of $u$ in $B_2$ we have, as in the previous proof, that we can assume also that $u \equiv 0$ in $\R^n \sm B_2$.

Let $\bar\a$ the H\"older exponent obtained by Theorem \ref{Calpha} and assume that it is not the reciprocal of an integer by making it smaller if necessary. Let $\d = 1/(4\lfloor1/\bar\a\rfloor)$. Fix a unit vector $e\in\R^n$, a number $h \in (0,\d/8)$ and, for $k=1,2,\ldots,\lfloor1/\bar\a\rfloor$, let $\eta^k$ be a smooth cut-off function supported in $B_{(3/4 - k\d) - \d/4}$ and equal to one in $B_{(3/4 - k\d) -\d/2}$. Define the following incremental quotients,
\begin{align*}
 w^{h,k}(x,t)   &= \frac{u(x+he,t) - u(x,t)}{|h|^{\bar\a k}},\\
 w_1^{h,k}(x,t) &= \frac{(\eta^k u)(x+he,t) - (\eta^k u)(x,t)}{|h|^{\bar\a k}},\\
 w_2^{h,k}(x,t) &= \frac{((1-\eta^k) u)(x+he,t) - ((1-\eta^k) u)(x,t)}{|h|^{\bar\a k}}.
\end{align*}
We will show that for every $k=0,1,\ldots,\lfloor1/\bar\a\rfloor-1$, if
\begin{align}\label{hypc1a}
\|w^{h,k}\|_{L^\8(B_{3/4-k\d}\times[-(3/4-k\d),0])} &\leq C(k)
\end{align}
then in $B_{3/4-(k+1)\d}\times[-(3/4-(k+1)\d),0]$ the following estimate holds
\begin{align}\label{conc1a}
\frac{|w^{h,k}(x,t) - w^{h,k}(y,s)|}{(|x-y| + |t-s|^{1/\s})^\a} &\leq C(k+1)
\end{align}
where $w^{h,0} = u$ and the constants $C(k)$ are independent of $h$.

When $(x,t) \in B_{(3/4 - k\d)-\d/8}\times[-((3/4 - k\d)-\d/8),0]$, $|w_1^{h,k}|$ is bounded above by the product rule and the hypothesis \eqref{hypc1a},
\begin{align*}
|w_1^{h,k}(x,t)| &\leq C(k) + \|\eta^k\|_{L^\8(B_{(3/4 - k\d)-\d/8}\times[-((3/4 - k\d)-\d/8),0])}.
\end{align*}
If $x \in (\R^n \sm B_{(3/4-k\d)-\d/8})\times[-((3/4 - k\d)-\d/8),0]$ then $w_1^{h,k}(x,t)$ just cancels.

By using that the equation is translation invariant we have that $u$ and $u(\cdot+he,\cdot)$ satisfy equations in the same ellipticity family. By Theorem \ref{Eqdiff}, $w^{h,k}$ satisfy the following inequalities in $B_1\times(-1,0]$ in the viscosity sense,
\begin{align*}
w^{h,k}_t - \cM^-_{\cL_1}w^{h,k} &\geq 0,\\
w^{h,k}_t - \cM^+_{\cL_1}w^{h,k} &\leq 0.
\end{align*}
The function $w_1^{h,k}$ satisfy a similar equation as $w^{h,k}$ in $B_{(3/4-k\d)- 3\d/4}\times(-1,0]$, the difference is on the right hand side introduced by the cutoff,
\begin{align*}
(w_1^{h,k})_t - \cM^-_{\cL_1}w_1^{h,k} &\geq -\cM^-_{\cL_1}w_2^{h,k},\\
(w_1^{h,k})_t - \cM^+_{\cL_1}w_1^{h,k} &\leq -\cM^+_{\cL_1}w_2^{h,k}.
\end{align*}

For $x \in B_{(3/4-k\d)- 3\d/4}$ the terms $|\cM^\pm_{\cL_1} w_2^h|$ are controlled by $\|u\|_\8 = 1$ by using that
\[
\int_{\R^n \sm B_{\r_0}} \frac{|K(y)-K(y-h)|}{|h|}dy \leq C \text{ every time $|h| < \frac{\rho_0}{2}$}.
\]
with $\r_0 = \d/8$. Indeed, for $L \in \cL_1$ with kernel $K$, $x \in B_{(3/4-k\d)- 3\d/4}$, $|y| \leq \d/8$ we have that $w_2^{h,k}(x+y) = 0$ and by the product rule
\begin{align*}
|Lw_2^{h,k}(x)| &= \left|\int w_2^{h,k}(x+y,t)K(y)dy\right|,\\
&= \left|\int_{\R^n\sm B_{\d/8}} \frac{(1-\eta^k)u(x+y+h)-(1-\eta^k)u(x+y)}{|h|^{\bar\a k}}K(y)dy\right|,\\
&= \left|\int_{\R^n\sm B_{\d/8}} (1-\eta^k)u(x+y)|h|^{1-\bar\a k}\frac{K(y)-K(y-h)}{|h|}dy\right|,\\
&\leq C.
\end{align*}

We get then the equations for $w_1^{h,k}$ in $B_{(3/4-k\d)- 3\d/4}\times(-((3/4-k\d)-3\d/4),0]$
\begin{align*}
(w_1^{h,k})_t - \cM^-_{\cL_1}w_1^{h,k} &\geq -C,\\
(w_1^{h,k})_t - \cM^+_{\cL_1}w_1^{h,k} &\leq C.
\end{align*}
By applying Theorem \ref{Calpha} to $w_1^{h,k}$ from $B_{(3/4-k\d)-3\d/4}\times(-((3/4-k\d)-3\d/4),0]$ to $B_{3/4-(k+1)\d}\times(-(3/4-(k+1)\d),0]$ we conclude that for a constant $C(k+1)$ independent of $h$,
\begin{align*}
\frac{|w_1^{h,k}(x,t) - w_1^{h,k}(y,s)|}{(|x-y| + |t-s|^{1/\s})^\a} \leq C(k+1).
\end{align*}
Which is equivalent to \eqref{conc1a}.

By Lemma 5.3 in \cite{CC} we get that \eqref{conc1a} implies that $w^{h,k+1}$ is also bounded by a constant independent of $h$. Therefore we can apply this procedure up to obtaining that $u_x$ is bounded in $B_{1/2}\times[-1/2,0]$. Finally, by applying the previous argument one more time to the Lipschitz quotient we conclude the theorem.
\end{proof}

{\bf Acknowledgment.} The authors would like to thank Luis Caffarelli for proposing the problem and for various useful discussions. The authors would also like to thank Luis Silvestre for pointing out several typos in an earlier version of this paper.


\end{document}